\documentclass[10pt]{article}
\usepackage[english]{babel}
\usepackage{graphicx}
\usepackage{amsmath}
\usepackage{amsfonts}
\usepackage{amssymb}
\usepackage{amsmath}
\usepackage{amssymb}
\usepackage{amsthm}
\setlength{\textwidth}{17cm}
\hoffset-2.5cm
\newtheorem{lema}{Lemma}[section]
\newtheorem{teor}{Theorem}[section]
\newtheorem{corol}{Corollary}[section]

\newtheorem{prop}{Proposition}[section]
\newtheorem{rem}{Remark}[section]

\newcommand{\nor}[2]{{\left\|{#1}\right\|_{#2}}}
\newcommand{\nora}[3]{{\left\|{#1}\right\|_{#2}^{#3}}}

\title{The IVP for a nonlocal perturbation of the Benjamin-Ono equation in classical and weighted Sobolev spaces}
\author{Germ\'an Fonseca \thanks{Universidad Nacional de Colombia, Bogot\'a. E-mail:  {\tt gefonsecab@unal.edu.co}} \\ Ricardo Pastr\'an \thanks{Universidad Nacional de Colombia, Bogot\'a. E-mail: {\tt rapastranr@unal.edu.co}}\\ Guillermo Rodr\'iguez-Blanco \thanks{Universidad Nacional de Colombia, Bogot\'a. E-mail:  {\tt grodriguezb@unal.edu.co}} 
}
\begin{document}
\maketitle
\begin{abstract}
We prove that the initial value problem associated to a nonlocal perturbation of the Benjamin-Ono equation is locally and globally well-posed in Sobolev spaces $H^s(\mathbb{R})$ for any $s>-3/2$ and we establish that our result is sharp in the sense that the flow map of this equation fails to be $C^2$ in $H^s(\mathbb{R})$ for $s<-3/2$. Finally, we study persistence properties of the solution flow in the weighted Sobolev spaces $Z_{s,r}=H^s(\mathbb{R})\cap L^2(|x|^{2r}\,dx)$ for $s\geq r >0$. We also prove some unique continuation properties of the solution flow in these spaces. 
\end{abstract}
\vspace{0.5cm}
\textit{Keywords:} Benjamin-Ono equation; Locally and Globally Well-posed, Sobolev spaces, Weighted Sobolev spaces.
\setcounter{equation}{0}
\setcounter{section}{0}

\section{Introduction and main results}
We study the initial value problem (IVP) for a nonlocal perturbation of the Benjamin-Ono (npBO) equation
\begin{equation}\label{npbo}
\left\{
\begin{aligned}
u_t+uu_x+ \mathcal{H}u_{xx} + \mu (\mathcal{H}u_x+\mathcal{H}u_{xxx})&=0,   \qquad t>0, \; x\in \mathbb{R}, \\
u(0)&=\phi,
\end{aligned}
\right. 
\end{equation}

where $\mu >0$ is constant and $\mathcal{H}$ denotes the usual Hilbert transform given by
\begin{equation*}
\mathcal{H}f(x)=\dfrac{1}{\pi}\,p.v.\int_{-\infty}^{\infty}\dfrac{f(y)}{y-x}\,dy=-\dfrac{1}{\pi}\,v.p.\dfrac{1}{x}\ast f,
\end{equation*}
or equivalently, $\widehat{(\mathcal{H}f)}(\xi)=i\, \text{sgn}(\xi)\widehat{f}(\xi)$ for $f\in \mathcal{S}(\mathbb{R})$. \\ 
This  differential equation corresponds to a nonlocal dissipative perturbation of the Benjamin-Ono equation, npBO. These types of equations have been used in  fluids and plasma theory, see \cite{H} and references therein.
Our aim in this work is to study local and global well-posedness of the initial value problem (IVP) (\ref{npbo}) in classical and weighted Sobolev spaces and to obtain some unique continuation results for the generated  flow. We say that an IVP is locally well-posed (LWP) in the Kato sense in a function spaces $ X$ provided that for every initial data $\phi\in X$ there exist $T=T(\|\phi\|_X)>0$ and a unique solution  $u\in C([0,T]: X)\cap ...=Y_T$ of the given IVP such that the map data-solution is locally continuous from $X$ to $Y_T$, and the IVP is said to be globally well-posed (GWP) in $X$ whenever $T$ can be taken arbitrarily large.  

Well-posedness of the  npBO was first studied by Pastr\'an and Rodr\'iguez in \cite{PR}. They proved that the IVP (\ref{npbo}) is locally well-posed  in $H^s(\mathbb{R})$ for $s>1/2$ and globally well-posed in $H^s(\mathbb{R})$ for $s\geq 1$. In this paper, we show that the initial value problem (\ref{npbo}) is LWP and GWP in the Sobolev spaces $H^s(\mathbb{R})$ for any $s>-3/2$. 
It is interesting to notice that therefore this npBO equation can be solved for more singular initial data than the Benjamin-Ono equation,  obtained from \eqref{npbo} when  the parameter $\mu=0$ for which the largest Sobolev space where it is GWP is $L^2(\mathbb R)$, see \cite{IK}, \cite{MP} and \cite{IT}. The following heuristic scaling argument shows that the Sobolev index $s=-\frac32$ corresponds to the lowest value where well-posedness for IVP (\ref{npbo}) is expected. Given $u$ a solution of the differential equation 
$$
u_t+uu_x+ \mu\mathcal{H}u_{xxx}=0,
$$
with initial data $\phi$ then for every  $\lambda>0, $ $ u_\lambda(x,t)=\lambda^2\, u(\lambda x, \lambda^3\,t)$ is also a solution with initial data $\lambda^2\,\phi(\lambda\cdot)$ and therefore $\|u_\lambda(0)\|_{\dot{H}^s}=\lambda^{2+s-\frac12}\|\phi\|_{\dot{H}^s}$ and hence to have the $\dot{H}^s$ norm invariant under this scaling we should have $s=s_c=-\frac32$. For the Benjamin-Ono equation this scaling index is $s_c=-\frac12$ and as mentioned above well-posedness for the BO equation in the range for $s\in[-\frac12,0)$ is still an open problem.
 
Since the dissipation of the npBO equation is in this  sense ``stronger" than the dispersion, we will use the dissipative methods of Dix for Burgers' equation \cite{Dix},  which consists in applying a fixed point theorem to the integral equation associated to (\ref{npbo}) in a time-weighted space (see (\ref{spacexts}) for the exact definition), see also Pilod \cite{P}, Esfahani \cite{Amin}, Carvajal and Panthee \cite{CP1} and \cite{CP2}, Duque \cite{Duque}, and, Pastr\'an and Ria\~no \cite{PRi}. We also prove that we cannot solve the Cauchy problem by a Picard iterative method implemented on the integral formulation of (\ref{npbo}) for initial data in the Sobolev space $H^s(\mathbb{R})$, $s<-3/2$. In particular, the methods introduced by Bourgain \cite{Bourgain} and Kenig, Ponce and Vega \cite{KPV} for the KdV equation cannot be used for $(\ref{npbo})$ with initial data in the Sobolev space $H^s(\mathbb{R})$ for $s<-3/2$. This kind of ill-posedness result is weaker than the loss of uniqueness proved by Dix in the case of Burgers equation. 

We will mainly work on the integral formulation of the npBO equation,
\begin{equation}\label{intequation}
u(t)=\Psi(u(t)):= S(t)\phi - \int_0^tS(t-\tau)[u(\tau)u_x(\tau)]\,d\tau ,\quad t\geq 0.
\end{equation}
\begin{teor}[LWP]\label{localresult}
Let $\mu >0$ and $s>-3/2$. Then for any $\phi \in H^s(\mathbb{R})$ there exists $T=T(\nor{\phi}{s})>0$ and a unique solution $u$ of the integral equation (\ref{intequation}) satisfying
\begin{align*}
&u\in C([0,T],H^s(\mathbb{R}))\cap C((0,T),H^{\infty}(\mathbb{R})).
\end{align*}
Moreover, the flow map $\phi \mapsto u(t)$ is smooth from $H^s(\mathbb{R})$ to $C([0,T],H^s(\mathbb{R}))\cap C((0,T],H^{\infty}(\mathbb{R}))\cap X_T^s$.
\end{teor}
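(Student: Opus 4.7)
My strategy will be to solve the Duhamel equation (\ref{intequation}) by a contraction in a time-weighted Banach space $X_T^s$ tailored to the parabolic smoothing of the linear part. A routine computation using $\widehat{\mathcal{H}f}(\xi)=i\operatorname{sgn}(\xi)\widehat{f}(\xi)$ gives the multiplier
\begin{equation*}
\widehat{S(t)\phi}(\xi)=\exp\!\bigl(i\xi|\xi|\,t-\mu(|\xi|^3-|\xi|)\,t\bigr)\widehat{\phi}(\xi),
\end{equation*}
showing that, modulo a globally bounded factor $e^{c\mu T}$ from the subprincipal $+\mu|\xi|$, the propagator behaves like the heat-type kernel $e^{-\mu|\xi|^3 t}$. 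From this one extracts the Kato-type smoothing
\begin{equation*}
\|D^r S(t)f\|_{L^2(\mathbb{R})}\lesssim t^{-(r-a)/3}\|f\|_{H^a(\mathbb{R})},\qquad r\ge a,\; t\in(0,T],
\end{equation*}
with implicit constant depending only on $\mu T$; this is the only linear ingredient required.

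Guided by the exponent $1/3$ in the smoothing estimate, I will take $X_T^s$ to be given by a norm of the form
\begin{equation*}
\|u\|_{X_T^s}:=\sup_{0<t\le T}\|u(t)\|_{H^s}+\sup_{0<t\le T}t^{\sigma/3}\|u(t)\|_{H^{s+\sigma}},
\end{equation*}
for a parameter $\sigma>0$ to be fixed below. The weight $t^{\sigma/3}$ is dictated by the smoothing estimate so that $\|S(\cdot)\phi\|_{X_T^s}\lesssim \|\phi\|_{H^s}$, while the second summand records the regularity gained for $t>0$ and supplies the room required to control the nonlinearity at low Sobolev indices.

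The central obstacle is the bilinear Duhamel estimate
\begin{equation*}
\left\|\int_0^\cdot S(\cdot-\tau)\,\partial_x(uv)(\tau)\,d\tau\right\|_{X_T^s}\lesssim T^\kappa\,\|u\|_{X_T^s}\,\|v\|_{X_T^s}
\end{equation*}
for some $\kappa>0$. I will move the $\partial_x$ onto $S(t-\tau)$, paying $(t-\tau)^{-(s+1-a)/3}$ after producing derivatives of order up to $s+1$ on the propagator, and estimate the product via the classical Sobolev multiplicative inequality $\|uv\|_{H^a}\lesssim \|u\|_{H^{s+\sigma}}\|v\|_{H^{s+\sigma}}$, valid whenever $a<2(s+\sigma)-1/2$. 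The weight $t^{\sigma/3}$ in the $X_T^s$-norm produces a factor $\tau^{-2\sigma/3}$ on the product, and the resulting convolution $\int_0^t(t-\tau)^{-(s+1-a)/3}\tau^{-2\sigma/3}\,d\tau$ is a Beta integral, finite and bounded by a positive power of $t$ provided $(s+1-a)/3+2\sigma/3<1$. Optimising with $a$ slightly below $s+1$, the product-validity and integrability constraints force $\sigma$ to lie in a window around $3/2$ whose width shrinks like $|s+3/2|$ and is non-empty exactly when $s>-3/2$; this is how the critical regularity emerges. A parallel but easier computation handles the second summand of the $X_T^s$-norm.

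With the bilinear estimate in hand, a standard Banach fixed-point argument on a closed ball of $X_T^s$, with $T=T(\|\phi\|_{H^s})$ small, yields a unique solution of (\ref{intequation}). Continuity $u\in C([0,T],H^s)$ at $t=0$ follows by approximating $\phi$ in $H^s$ by smooth data and using the tame dependence of the fixed-point estimates on the datum. For any $t_0\in(0,T)$ the second summand of the norm places $u(t_0)\in H^{s+\sigma}$; restarting the equation at $t_0$ and iterating this gain yields $u\in C((0,T],H^\infty)$. Finally, smoothness of the flow map $\phi\mapsto u$ into all the target spaces follows from the polynomial dependence of the fixed-point map on $\phi$ together with the analytic implicit function theorem, equivalently from the contraction-with-parameter form of Banach's theorem.
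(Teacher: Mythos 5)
Your overall architecture --- a contraction in a Dix--Pilod time-weighted space built on the smoothing estimate $\|S(t)\phi\|_{H^{s+\lambda}}\lesssim (e^{\mu t}+t^{-\lambda/3})\|\phi\|_{H^s}$, followed by a restart/bootstrap argument to get $u\in C((0,T],H^{\infty}(\mathbb{R}))$ and the implicit-function-theorem argument for smoothness of the flow map --- is the paper's. Where you genuinely diverge is in the choice of auxiliary norm and in the mechanism of the bilinear estimate. The paper fixes the auxiliary piece to be $t^{|s|/3}\|u(t)\|_{L^2}$ (your $\sigma=|s|$, so $s+\sigma=0$) and estimates the nonlinearity on the Fourier side: $\|\widehat{u}\ast\widehat{v}\|_{L^{\infty}}\le\|u\|_{L^2}\|v\|_{L^2}$ by Young, after which $\|S(t-\tau)\partial_x(uv)\|_{H^s}$ is controlled by $\|\,|\xi|^{1+s}e^{\mu(|\xi|-|\xi|^3)(t-\tau)}\|_{L^2_{\xi}}\sim (t-\tau)^{-(2s+3)/6}$, so that the threshold $s>-3/2$ appears as square-integrability of $|\xi|^{1+s}$ near $\xi=0$. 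You instead keep $\sigma$ free, record the gained regularity in $H^{s+\sigma}$ with $\sigma$ near $3/2$, and invoke the Sobolev product law $H^{s+\sigma}\times H^{s+\sigma}\to H^{a}$, the threshold emerging as non-emptiness of the admissible window for $\sigma$. Both routes close and produce a positive power of $T$ (your window's lower endpoint $\sigma=3/4-s/2$ reproduces the paper's exponent $(2s+3)/6$); yours is somewhat more flexible, the paper's more economical since only $\mathcal{F}L^{\infty}$ control of $uv$ is ever needed.

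Two details you should still pin down. First, the product law requires not only $a<2(s+\sigma)-\frac{1}{2}$ but also $a\le s+\sigma$ and $s+\sigma\ge 0$; with $a$ just below $s+1$ these force $\sigma\ge\max(1,-s)$, which is indeed compatible with your window for every $s>-3/2$, but it must be checked, since choosing $\sigma$ too small would invalidate the product estimate. Second, the fixed point gives uniqueness only in a ball of $X_T^s$, whereas the theorem asserts uniqueness in the stated class; the paper supplies the additional step (an iteration of the contraction estimate on subintervals $[r,\vartheta]$) needed to upgrade ball-uniqueness to uniqueness in all of $X_T^s$, and your write-up should include something of this kind. Your density argument for continuity at $t=0$ and the restart argument for interior smoothness are fine and match the paper's.
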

\begin{teor}[GWP]\label{globalresult}
 Let $s>-3/2$ and $\phi \in H^s(\mathbb{R})$. Then the supremum of all $T>0$ for which all the assertions of Theorem \ref{localresult} hold is infinity. 
\end{teor}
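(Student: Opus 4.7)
The plan is to combine the smoothing property from Theorem \ref{localresult} with a standard $L^2$ energy estimate, and then iterate the local theory to reach arbitrarily large times. Let $T^*$ denote the maximal time of existence of the solution from Theorem \ref{localresult} and argue by contradiction, assuming $T^*<\infty$.

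The first step is to derive an a priori $L^2$ bound. I would fix any $t_0\in(0,T^*)$ and use the smoothing part of Theorem \ref{localresult} to conclude $u(t)\in H^\infty(\mathbb{R})$ for every $t\in[t_0,T^*)$, which makes the following energy computation rigorous. Multiplying \eqref{npbo} by $u$ and integrating in $x$, the nonlinear term vanishes via $\int u^2 u_x\,dx=\tfrac{1}{3}\int(u^3)_x\,dx=0$, and $\int u\,\mathcal{H}u_{xx}\,dx$ vanishes by Plancherel because its Fourier integrand is the odd function $|\xi|\xi\,|\hat u(\xi)|^2$. The two remaining linear terms produce
\begin{equation*}
\frac{1}{2}\frac{d}{dt}\|u(t)\|_{L^2}^2=\mu\int_{\mathbb{R}}|\xi|\,|\hat u(t,\xi)|^2\,d\xi-\mu\int_{\mathbb{R}}|\xi|^3\,|\hat u(t,\xi)|^2\,d\xi,
\end{equation*}
and the elementary inequality $|\xi|\le 1+|\xi|^3$ absorbs the first integral into the second, yielding $\tfrac{d}{dt}\|u(t)\|_{L^2}^2\le 2\mu\|u(t)\|_{L^2}^2$. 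Grönwall's lemma then gives $\|u(t)\|_{L^2}\le e^{\mu(t-t_0)}\|u(t_0)\|_{L^2}$ on $[t_0,T^*)$.

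The second step is the extension argument. Since $0>-3/2$, Theorem \ref{localresult} applies with $s=0$ and produces an existence time $\tau=\tau(R)>0$ that depends only on $R=\|\phi\|_{L^2}$. Setting $M:=\sup_{t\in[t_0,T^*)}\|u(t)\|_{L^2}$, which is finite by the a priori bound, and choosing $t_1\in(T^*-\tau(M)/2,T^*)$, I apply Theorem \ref{localresult} with initial datum $u(t_1)\in L^2$ to obtain a solution on $[t_1,t_1+\tau(M)]$, strictly beyond $T^*$. Uniqueness in the intersection of the relevant function spaces identifies this extension with $u$, contradicting the maximality of $T^*$, so $T^*=\infty$. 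The main point to watch, and effectively the only obstacle, is that the first-order term $\mu\mathcal{H}u_x$ contributes the \emph{positive} quantity $+\mu\int|\xi|\,|\hat u|^2\,d\xi$ to the energy derivative, so the linear flow actually amplifies low frequencies rather than damping them; this is harmless because $|\xi|$ is bounded by $1$ on $\{|\xi|\le 1\}$ and by $|\xi|^3$ on $\{|\xi|\ge 1\}$, costing only an exponential-in-time prefactor that remains finite on every bounded time interval, while the dominant third-order dissipation still controls the high-frequency behavior.
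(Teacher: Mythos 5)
Your proposal is correct and follows essentially the same route as the paper: the $L^2$ energy identity $\tfrac{1}{2}\tfrac{d}{dt}\|u\|^2=\mu\int(|\xi|-|\xi|^3)|\hat u(\xi)|^2\,d\xi$, the elementary bound $|\xi|-|\xi|^3\le 1$ plus Gr\"onwall, and then the extension argument based on the fact that the local existence time at $s=0$ depends only on the $L^2$ norm, with the smoothing $u(t)\in H^{\infty}(\mathbb{R})$ for $t>0$ reducing the case $s\in(-3/2,0)$ to $s=0$. The only cosmetic difference is that the paper runs the two cases $s\ge 0$ and $s<0$ in separate passes and explicitly records the resulting bound on $\|u\|_{X_T^s}$ (including the $t^{|s|/3}\|u(t)\|$ piece), whereas you fold both into one argument starting from a positive time $t_0$; that remaining bookkeeping follows immediately from your $L^2$ bound.
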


On the other hand, it is known that the Banach's Fixed Point Theorem cannot be applied to the Benjamin-Ono equation  due to the lack of regularity for the map data-solution, more precisely, this map fails to be $C^2$ and even more is not locally uniformly continuous, see  \cite{MolSauTzv} and \cite{KTzv} respectively. 
Here, it is proved that there does not exist a $T>0$ such that (\ref{npbo}) admits a unique local solution defined on the interval $[0,T]$ and such that the flow-map data-solution $\phi \mapsto u(t)$, $t\in [0,T]$, is $C^2$ differentiable at the origin from $H^s(\mathbb{R})$ to $H^s(\mathbb{R})$. As a consequence, we cannot solve the Cauchy problem for the npBO equation by a Picard iterative method implemented on the integral formulation (\ref{intequation}), at least in the Sobolev spaces $H^s(\mathbb{R})$, with $s<-3/2$. This proves that  our  local and global well-posedness results for the npBO in $H^s(\mathbb{R})$, when $s>-3/2$, are  sharp. 

\begin{teor}\label{malpuestodos}
Fix $s<-3/2$. Then there does not exist a $T>0$ such that (\ref{npbo}) admits a unique local solution defined on the interval $[0,T]$ and such that the flow-map data-solution
\begin{equation}
\phi \longmapsto u(t), \qquad t\in [0,T],
\end{equation}
for (\ref{npbo}) is $C^2$ differentiable at zero from $H^s(\mathbb{R})$ to $H^s(\mathbb{R})$.
\end{teor}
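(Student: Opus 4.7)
The approach is the Bourgain/Molinet-Saut-Tzvetkov strategy for ruling out $C^2$ regularity of the flow at zero. Assume for contradiction that for some $T>0$ the IVP \eqref{npbo} admits a unique local solution on $[0,T]$ and that the flow map $\phi\mapsto u(t)$ is $C^2$ at $\phi=0$ from $H^s(\mathbb{R})$ into $H^s(\mathbb{R})$ for each $t\in[0,T]$. Differentiating the Duhamel formulation \eqref{intequation} applied to $\epsilon\phi$ twice in $\epsilon$ at $\epsilon=0$ identifies the second Fr\'echet derivative at zero applied to $(\phi,\phi)$ as
\begin{equation*}
u_2(t)=-\int_0^t S(t-\tau)\,\partial_x\bigl[(S(\tau)\phi)^2\bigr]\,d\tau,
\end{equation*}
and $C^2$-continuity at $0$ forces the bilinear estimate $\|u_2(t)\|_{H^s}\lesssim \|\phi\|_{H^s}^2$ for every $\phi\in H^s(\mathbb{R})$. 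The plan is to produce a sequence $\{\phi_N\}\subset H^s(\mathbb{R})$ with $\|\phi_N\|_{H^s}\sim 1$ and a fixed small $t>0$ for which $\|u_2(t)\|_{H^s}\to\infty$ as $N\to\infty$, which contradicts this estimate precisely in the range $s<-3/2$.

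I take the test data concentrated on two Fourier intervals of length one near $\pm N$:
\begin{equation*}
\widehat{\phi_N}(\xi)=N^{-s}\bigl(\chi_{[N,N+1]}(\xi)+\chi_{[-N-1,-N]}(\xi)\bigr),
\end{equation*}
which satisfies $\|\phi_N\|_{H^s}\sim 1$. Writing $\widehat{S(\tau)f}(\xi)=e^{\tau p(\xi)}\widehat f(\xi)$ with linear symbol $p(\xi)=i|\xi|\xi+\mu|\xi|(1-\xi^2)$, the Fourier transform of $u_2$ is
\begin{equation*}
\widehat{u_2}(t,\xi)=-i\xi\,e^{tp(\xi)}\!\int_{\mathbb{R}}\widehat{\phi_N}(\xi_1)\,\widehat{\phi_N}(\xi-\xi_1)\,\frac{e^{t\Omega(\xi,\xi_1)}-1}{\Omega(\xi,\xi_1)}\,d\xi_1,
\end{equation*}
with resonance function $\Omega(\xi,\xi_1)=p(\xi_1)+p(\xi-\xi_1)-p(\xi)$. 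I split the $\xi_1$-integral into four pieces according to the products of the two intervals. The two same-sign regions (both $\xi_1,\xi-\xi_1\in[N,N+1]$ or both in $[-N-1,-N]$) produce output concentrated near $\xi\sim\pm 2N$; there $\mathrm{Re}\,p(\xi)\approx -8\mu N^3$, so the outer factor $e^{tp(\xi)}$ provides exponentially strong damping and these contributions are negligible.

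The decisive contribution comes from the cross terms, e.g.\ $\xi_1\in[N,N+1]$ and $\xi-\xi_1\in[-N-1,-N]$, whose support in $\xi$ is the low-frequency window $[-1,1]$. On this region
\begin{equation*}
\mathrm{Re}\,\Omega(\xi,\xi_1)=-2\mu N^3+O(N), \qquad |\Omega(\xi,\xi_1)|\sim N^3,
\end{equation*}
$e^{tp(\xi)}=1+O(t)$, and the convolution of the two Fourier indicators is a triangle of height $\sim 1$ on $[-1,1]$. Fixing a small $t>0$ depending only on $\mu$ so that $e^{t\,\mathrm{Re}\,\Omega}\le 1/2$ gives $|(e^{t\Omega}-1)/\Omega|\gtrsim N^{-3}$, yielding the pointwise lower bound $|\widehat{u_2}(t,\xi)|\gtrsim N^{-2s-3}|\xi|$ for $\xi\in[-1/2,1/2]$. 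Hence $\|u_2(t)\|_{H^s}\gtrsim N^{-2s-3}$, which tends to infinity as $N\to\infty$ whenever $s<-3/2$ while $\|\phi_N\|_{H^s}^2=O(1)$, the required contradiction. The main technical obstacle is upgrading the formal magnitude calculation to a genuine pointwise lower bound; this requires ruling out cancellation coming from the oscillatory part $\mathrm{Im}\,\Omega$ in the quotient $(e^{t\Omega}-1)/\Omega$, which is exactly what the choice of $t$ above accomplishes by making the $-1$ dominate the complex exponential.
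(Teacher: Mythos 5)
Your proposal is correct and follows essentially the same route as the paper: the same two-bump data localized at frequencies $\pm N$, the same identification of the second derivative of the flow with the second Picard iterate, and the same mechanism whereby the cross interaction lands at low frequency with $\mathrm{Re}\,\Omega\sim-\mu N^3$ dominating $\mathrm{Im}\,\Omega\sim N|\xi|$, so that the quotient $(e^{t\Omega}-1)/\Omega$ has sign-definite real part of size $N^{-3}$ and no cancellation occurs in the $\xi_1$-integral, yielding the divergence $N^{-2s-3}\to\infty$ for $s<-3/2$. The only (cosmetic) difference is that the paper first proves the failure of the abstract bilinear estimate (Theorem \ref{malpuestouno}) with interval length $2\gamma$ kept as a parameter before setting $\gamma=O(1)$, whereas you fix unit-length intervals and argue directly.
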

A direct corollary of Theorem \ref{malpuestodos} yields our following result:
\begin{teor}\label{illposed}
The flow map data-solution for the npBO equation is not $C^2$ from $H^s(\mathbb{R})$ to $H^s(\mathbb{R})$, if $s<-3/2$.
\end{teor}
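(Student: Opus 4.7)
The plan is essentially a one-line deduction from Theorem \ref{malpuestodos}, which carries the substantive content. Suppose, for contradiction, that for some $s<-3/2$ the data-solution flow map $\phi\mapsto u(t)$ for (\ref{npbo}) were $C^2$ from $H^s(\mathbb{R})$ to $H^s(\mathbb{R})$ on some time interval $[0,T]$. The very existence of such a flow map presupposes that each $H^s$ initial datum determines a unique local solution on $[0,T]$. We would therefore have simultaneously unique local existence on $[0,T]$ and $C^2$ differentiability of the flow at the origin from $H^s$ to $H^s$, contradicting Theorem \ref{malpuestodos}. Hence no such $T$ can exist and the flow map fails to be $C^2$ from $H^s$ to $H^s$ whenever $s<-3/2$.

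Because the corollary is vacuous without Theorem \ref{malpuestodos}, let me indicate the strategy I would use for the latter, since the real mathematical work sits there. Assuming by contradiction unique local existence together with $C^2$ dependence at $\phi=0$, one expands the solution along $\varepsilon\phi$ and reads off from the integral equation (\ref{intequation}) that the second-order term must equal
\begin{equation*}
B(\phi,\phi)(t)=-\tfrac{1}{2}\int_0^t S(t-\tau)\,\partial_x\bigl((S(\tau)\phi)^2\bigr)\,d\tau.
\end{equation*}
A $C^2$ flow map at the origin forces $B:H^s(\mathbb{R})\times H^s(\mathbb{R})\to H^s(\mathbb{R})$ to be a bounded bilinear map, so it suffices to construct $\phi_N\in H^s(\mathbb{R})$ with $\nor{\phi_N}{s}=O(1)$ and $\nor{B(\phi_N,\phi_N)(t_0)}{s}\to\infty$ at some fixed $t_0\in(0,T]$, in the spirit of Molinet-Saut-Tzvetkov.

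The counterexample is built from the symbol of the linear semigroup, $\widehat{S(t)f}(\xi)=e^{t\sigma(\xi)}\widehat{f}(\xi)$ with $\sigma(\xi)=i\xi|\xi|+\mu|\xi|(1-\xi^2)$. I would choose $\widehat{\phi_N}$ supported on two narrow intervals centered at high frequencies $\xi_1,\xi_2$ selected so that the output frequency $\xi_1+\xi_2$ lies in a low-frequency window, normalized by a suitable power of $N$ so that the $H^s$ norm is $O(1)$. By Plancherel, the $H^s$ norm of $B(\phi_N,\phi_N)(t_0)$ becomes an integral whose size is governed by the factor $|\xi_1+\xi_2|$ coming from $\partial_x$, the oscillatory phase from the dispersive part of $\sigma$, and a time integration against $e^{(t_0-\tau)\mathrm{Re}\,\sigma(\xi_1+\xi_2)}e^{\tau(\mathrm{Re}\,\sigma(\xi_1)+\mathrm{Re}\,\sigma(\xi_2))}$; the scaling heuristic $s_c=-3/2$ recorded in the introduction predicts blow-up of this quantity precisely for $s<-3/2$.

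The main obstacle is calibrating the Fourier supports of $\phi_N$ so that the strongly dissipative factor $e^{-\mu\tau|\xi|^3}$ inside $S(\tau)$ does not suppress the resonant interaction: this is why the inputs must be at scale $\sim N$ while the output $\xi_1+\xi_2$ is kept small, and it is exactly this mismatch between the cubic dissipation at the inputs and its mildness at the output that selects the threshold $-3/2$. Once the lower bound on $\nor{B(\phi_N,\phi_N)(t_0)}{s}$ is established by a direct but delicate computation, it contradicts the boundedness of $B$, yielding Theorem \ref{malpuestodos} and hence, by the short argument of the first paragraph, Theorem \ref{illposed}.
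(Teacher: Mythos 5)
Your deduction of Theorem \ref{illposed} from Theorem \ref{malpuestodos} is exactly the paper's argument (the paper records it as a direct corollary), and your sketched strategy for Theorem \ref{malpuestodos} itself --- failure of boundedness of the second Picard iterate $B(\phi,\phi)$ tested on data whose Fourier transform consists of two narrow bumps at high frequencies $\pm N$ chosen so that the interaction output lands at low frequency, where the cubic dissipation is mild --- is precisely the Molinet--Saut--Tzvetkov-style computation the paper carries out in Theorem \ref{malpuestouno} with $\widehat{\phi}=N^{-s}\gamma^{-1/2}(\mathbb{I}_I+\mathbb{I}_{-I})$, $I=[N,N+2\gamma]$. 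The proposal is correct and takes essentially the same route as the paper.
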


On the other hand, we also  study real valued solutions of the IVP npBO (\ref{npbo}) in the weighted Sobolev spaces
\begin{align}
Z_{s,r}=H^s(\mathbb{R})\cap L^2\bigl(|x|^{2r}\,dx\bigr); \quad s,\;r \in \mathbb{R}, \label{wss}
\end{align}
and decay properties of solutions of the IVP npBO (\ref{npbo}). Pastr\'an and Rodr\'iguez in \cite{PR}  proved the following results:
\begin{teor}\label{pr}(See \cite{PR})
Let $\mu>0$ and $T>0$.
\begin{description}
\item[(i)] The \text{IVP} (\ref{npbo}) is \text{GWP} in $Z_{2,1}$.\\
\item[(ii)] If $u(x,t)$ is a solution of the \text{IVP} (\ref{npbo}) such that $u\in C([0,T]:Z_{2,2})$, then $\widehat{u}(0,t)\equiv 0$.\\
\item[(iii)] If $u(x,t)$ is a solution of the IVP (\ref{npbo}) such that $u\in C([0,T]:Z_{3,3})$, then $u(x,t)\equiv 0$.\\
\end{description}
\end{teor}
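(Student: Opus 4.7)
All three parts hinge on the commutator identity
\begin{equation*}
x^k\mathcal{H} f(x)=\mathcal{H}(x^k f)(x)-\frac{1}{\pi}\sum_{j=0}^{k-1}x^{k-1-j}\int_{\mathbb{R}}y^j f(y)\,dy,\qquad k\geq 1,
\end{equation*}
obtained by expanding $y^k=((y-x)+x)^k$ in the kernel of $\mathcal{H}$. The polynomial remainder lies in $L^2(\mathbb{R})$ only when it vanishes identically, and this rigidity drives the obstructions in (ii) and (iii).

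For (i), LWP in $H^2(\mathbb{R})$ and a global $H^2$ bound are already guaranteed by Theorems \ref{localresult} and \ref{globalresult} together with the dissipation provided by $\mu\mathcal{H}u_{xxx}$. To upgrade persistence to $Z_{2,1}$ I would run a weighted energy estimate for $\|xu(t)\|_{L^2}^2$: differentiate in time, substitute the equation, and apply the $k=1$ commutator identity. Since $\int u_x\,dx=\int u_{xx}\,dx=\int u_{xxx}\,dx=0$ by decay, each Hilbert piece reduces cleanly to $x\mathcal{H}u^{(j)}=\mathcal{H}(xu^{(j)})$ with no polynomial correction, and the resulting inner products are controlled using the $L^2$-antisymmetry of $\mathcal{H}$, integration by parts, Sobolev embedding $H^2\hookrightarrow L^\infty$, and the already available $H^2$ bound. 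Gr\"onwall's inequality then closes the a priori estimate globally in time.

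For (ii), multiply \eqref{npbo} by $x^2$ and apply the commutator identity to each Hilbert-transform term. All relevant moments of $u_{xx}$ and $u_{xxx}$ vanish by decay, so $\mathcal{H}u_{xx}$ and $\mathcal{H}u_{xxx}$ produce no polynomial correction. The $\mu\mathcal{H}u_x$ term, however, leaves the residue $\frac{\mu}{\pi}\int u(y,t)\,dy=\frac{\mu}{\pi}\widehat{u}(0,t)$, a constant in $x$. Under the hypothesis $u\in C([0,T]:Z_{2,2})$ the remaining pieces $(x^2u)_t$, $x^2uu_x$, $\mathcal{H}(x^2u_{xx})$, $\mathcal{H}(x^2u_x)$, $\mathcal{H}(x^2u_{xxx})$ sit in $L^2(\mathbb{R})$, forcing this constant to be square-integrable on $\mathbb{R}$ and hence zero, which proves $\widehat{u}(0,t)\equiv 0$.

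For (iii), the same procedure with $x^3$ generates the corrections $-\frac{2}{\pi}\widehat{u}(0,t)$ from $\mathcal{H}u_{xx}$ and $\frac{\mu}{\pi}\bigl(2\!\int yu\,dy+x\,\widehat{u}(0,t)\bigr)$ from $\mu\mathcal{H}u_x$, while $\mathcal{H}u_{xxx}$ again contributes nothing. Forcing this polynomial in $x$ to vanish identically yields both $\widehat{u}(0,t)=0$ and $M_1(u)(t):=\int xu(x,t)\,dx=0$ for every $t\in[0,T]$. To conclude, compute
\begin{equation*}
\frac{d}{dt}M_1(u)=\int xu_t\,dx=-\int xuu_x\,dx-\int x\bigl(\mathcal{H}u_{xx}+\mu\mathcal{H}u_x+\mu\mathcal{H}u_{xxx}\bigr)dx=\tfrac12\|u(t)\|_{L^2}^2,
\end{equation*}
where $\int xuu_x\,dx=-\tfrac12\|u\|_{L^2}^2$ by integration by parts, and the three Hilbert integrals vanish because $\int xf\,dx=i\partial_\xi\widehat{f}(0)$ while the symbols $-i\xi|\xi|\widehat{u}$, $-|\xi|\widehat{u}$ and $|\xi|\xi^2\widehat{u}$ all have vanishing $\xi$-derivative at the origin (the middle one using the already-established $\widehat{u}(0,t)=0$). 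Since $M_1(u)\equiv 0$, its derivative vanishes identically, forcing $\|u(t)\|_{L^2}\equiv 0$ and hence $u\equiv 0$. The main delicate point is the systematic bookkeeping: each moment $\int y^j u^{(k)}\,dy$ must be correctly classified as either automatically zero by decay or expressible through $\widehat{u}(0,t)$ and $M_1(u)(t)$, and parts (ii) and (iii) differ chiefly in how many coefficients of the resulting polynomial survive.
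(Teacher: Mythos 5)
Your sketch is essentially correct, and it is worth noting that the paper does not actually prove this theorem: it is quoted from \cite{PR}, and Remark 1.2 observes that parts \textbf{(ii)} and \textbf{(iii)} also follow from the sharper fractional-weight Theorems \ref{contunica1} and \ref{contunica2}. Your route is the classical Iorio-style integer-weight argument carried out on the differential equation itself: the algebra checks out (the identity $x^k\mathcal{H}f=\mathcal{H}(x^kf)-\tfrac{1}{\pi}\sum_{j}x^{k-1-j}\int y^jf$ is correct for the paper's normalization of $\mathcal{H}$; the moments of $u_{xx}$ and $u_{xxx}$ you discard do vanish; the surviving corrections $\tfrac{\mu}{\pi}\widehat{u}(0,t)$ at weight $2$ and $\tfrac{\mu}{\pi}x\,\widehat{u}(0,t)+\tfrac{2\mu}{\pi}\!\int yu\,dy-\tfrac{2}{\pi}\widehat{u}(0,t)$ at weight $3$ are exactly right; and the identity $\tfrac{d}{dt}\int xu\,dx=\tfrac12\|u\|^2$ agrees with \eqref{intxnpbo1}). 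The paper's own machinery instead works on the Fourier side of Duhamel's formula with Stein's derivative $\mathcal{D}^{1/2}$, smooth localizers, commutator estimates (Proposition \ref{simplestimate}) and Proposition \ref{I1} on jump discontinuities; that buys the sharp thresholds $r=3/2$ and $r=5/2$ and requires the extra decay only at two or three times, at the cost of much heavier analysis. Your approach buys elementarity at integer weights, and in \textbf{(iii)} it actually gives the cleaner conclusion $u\equiv 0$ on all of $[0,T]$ (since $M_1\equiv 0$ on the whole interval), whereas Theorem \ref{contunica2} only yields vanishing for $t\geq t^*$.

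The one point you should not gloss over is the justification that every term other than the polynomial residue genuinely lies in $L^2(\mathbb{R})$. The hypothesis $u\in C([0,T]:Z_{2,2})$ gives $x^2u\in L^2$ and $u\in H^2$, but not directly $x^2u_x,\ x^2u_{xxx}\in L^2$ or $x^2u_t\in L^2$; you need the smoothing effect ($u(t)\in H^\infty$ for $t>0$) together with interpolation between weights and derivatives in the spirit of \eqref{simplificados}, plus persistence of the interpolated quantities, before the rigidity argument ``a polynomial in $L^2(\mathbb{R})$ is zero'' can be invoked. Likewise the weighted energy estimate in \textbf{(i)} should be run with truncated weights $\langle x\rangle_N$ and a passage to the limit. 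These are standard but not free; as a sketch, however, your plan is sound.
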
 
Notice that the real valued solutions of the IVP associated to the npBO equation satisfy that the quantity $I(u)=\int_{-\infty}^{\infty}u(x,t)\,dx$ is time invariant, i. e. the property $\widehat{\phi}(0)=0$ is preserved by the solution flow. This leads us to define 
$$\dot{Z}_{s,r}=\{f\in Z_{s,r} \,:\, \widehat{f}(0)=0 \}, \qquad s, r \in \mathbb{R}.$$

 In this work  we extend these results in Theorem \ref{pr} from integer values to the continuum optimal range of indices $(s,r)$. In this sense, our main results are the following:
\begin{teor}\label{pre}
\begin{description}
\item[]
\item[(i)] Let $s\geq r>0$, and $\,r<3/2$.  The \text{IVP} associated to the \text{npBO} equation is \text{GWP}  in $Z_{s,r}$.
\item[(ii)] If  $\,r\in [3/2,5/2)$ and $\,r\leq s$, then the \text{IVP} (\ref{npbo}) is \text{GWP} in $\dot Z_{s,r}$.
\end{description}
\end{teor}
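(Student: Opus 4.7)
The plan is to promote the $H^s(\mathbb R)$-solution from Theorem \ref{globalresult} into the weighted space $Z_{s,r}$ by deriving an a priori bound on $\||x|^r u(t)\|_{L^2}$ through the Duhamel formula \eqref{intequation}. Given $\phi\in Z_{s,r}$ (respectively $\dot Z_{s,r}$), let $u\in C([0,\infty);H^s(\mathbb R))$ denote the global solution supplied by Theorem \ref{globalresult}; since $\|u(t)\|_{H^s}$ is already under control, the whole task reduces to showing that $\||x|^r u(t)\|_{L^2}$ is finite and grows at most polynomially in $t$, which then closes by a Gronwall iteration. The crucial tool is the Fourier symbol of the semigroup $S(t)$,
\begin{equation*}
\widehat{S(t)\phi}(\xi)=e^{it\xi|\xi|-\mu t(|\xi|^3-|\xi|)}\widehat{\phi}(\xi),
\end{equation*}
whose dissipative factor provides exponential decay $e^{-\mu t|\xi|^3}$ at high frequencies; this lets any fractional derivative in $x$ applied to $S(t-\tau)$ cost only a factor $(t-\tau)^{-\beta}$ with $\beta<1$, which remains integrable near $\tau=t$.

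For part (i), where $r<3/2$, Plancherel's identity $\||x|^r f\|_{L^2}=c_r\|D^r_\xi\widehat f\|_{L^2}$ converts the weighted estimate into a fractional-derivative estimate on $e^{it\xi|\xi|-\mu t(|\xi|^3-|\xi|)}\widehat\phi(\xi)$. Applying a Kato-Ponce-type Leibniz rule for $D^r_\xi$, together with boundedness of the first two derivatives of the dispersive phase $\xi|\xi|$, we expect
\begin{equation*}
\||x|^r S(t)\phi\|_{L^2}\lesssim (1+t^\alpha)\bigl(\|\phi\|_{H^s}+\||x|^r\phi\|_{L^2}\bigr)
\end{equation*}
for some $\alpha>0$. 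For the Duhamel contribution, writing $uu_x=\tfrac12(u^2)_x$ lets one absorb the $\partial_x$ into the dissipative part of $S(t-\tau)$ at the cost of $(t-\tau)^{-1/3}$, reducing the task to estimating $\||x|^r u^2\|_{L^2}$. This quantity is in turn dominated by $\|u\|_{H^s}\||x|^r u\|_{L^2}$ via Hardy-type inequalities and weighted-space interpolation, a device standard in the Fonseca-Ponce program for Benjamin-Ono-type equations. Gronwall then closes the estimate on every finite time interval, yielding GWP in $Z_{s,r}$.

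For part (ii), with $r\in[3/2,5/2)$, the obstruction is that $(\xi|\xi|)''=2\,\mathrm{sgn}(\xi)$ has a jump at $\xi=0$, so further fractional derivatives produce singular contributions that, when paired with $\widehat u$, generate boundary terms of the form $c\,t^j\,\widehat u(0,t)$. The restriction to $\dot Z_{s,r}$ precisely neutralizes these: since $uu_x=\partial_x(u^2/2)$ has zero mean and the semigroup symbol is regular at $\xi=0$, the property $\widehat\phi(0)=0$ is conserved along the flow, i.e.\ $\widehat u(0,t)=0$ for all $t\geq 0$. With this cancellation in place the argument of part (i) goes through, up to the next threshold $r=5/2$ where the term $\partial_\xi\widehat u(0,t)$ appears and is no longer forced to vanish.

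The main obstacle will be making the cancellation $\widehat u(0,t)=0$ rigorous in the fractional setting, because $D^r_\xi$ is a non-local singular integral acting on $e^{it\xi|\xi|}\widehat u(\xi,t)$ and extracting the precise boundary contribution is delicate. A natural route is Stein's complex interpolation applied to an analytic family of weighted operators joining the integer-weight estimates (which parallel the computations in Theorem \ref{pr} and its $r=2$ analogue) with the $H^s$-GWP estimate of Theorem \ref{globalresult}; an equivalent path is the Fonseca-Linares-Ponce real-interpolation characterization of $Z_{s,r}$ combined with explicit fractional-calculus estimates. Either approach must produce an a priori inequality whose time dependence is slow enough that Gronwall delivers \emph{global}, not merely local, well-posedness.
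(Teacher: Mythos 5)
Your overall strategy --- persistence via Duhamel, converting $\||x|^r\cdot\|_{L^2}$ into a fractional derivative of $F_\mu(t,\xi)\widehat\phi(\xi)$ on the Fourier side, and invoking the conservation of $\widehat u(0,t)=0$ in the upper range --- is the same as the paper's, which carries it out in Propositions \ref{xbuenl2} through \ref{xbuenl22<b<5/2} using Stein's derivative $\mathcal{D}^b$ and the Leibniz rule (\ref{productostein}). However, there is a genuine gap in your account of part (ii): you locate the obstruction in $(\xi|\xi|)''=2\,\mathrm{sgn}(\xi)$, i.e.\ in the \emph{second} $\xi$-derivative of the dispersive phase. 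A second $\xi$-derivative only enters for weights $r\geq 2$, so your mechanism would place the threshold at $r=5/2$ rather than $r=3/2$, and it leaves you with no visible obstruction --- hence no reason to pass to $\dot Z_{s,r}$ --- in the range $r\in[3/2,2)$. The actual culprit at $r=3/2$ is the low-order dissipative term $\mu|\xi|$ in the symbol: by (\ref{uno}), $\partial_\xi\bigl(F_\mu\widehat\phi\bigr)$ contains $\mu t\,\mathrm{sgn}(\xi)F_\mu\widehat\phi$, which jumps by $2\mu t\,\widehat\phi(0)$ at $\xi=0$, and Proposition \ref{I1} shows that $\mathcal{D}^{\theta}$ of a jump is not locally $L^2$ once $\theta\geq 1/2$. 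Equivalently, on the physical side $xS(t)\phi$ produces the term $t\,\mathcal{H}\phi$, and $|x|^{r-1}$ ceases to be an $A_2$ weight for $\mathcal{H}$ exactly when $r-1\geq 1/2$; this is Lemma \ref{nota2} and Proposition \ref{xbuenl23/2<b<2}. In the range $r\in[2,5/2)$ the object killed by $\widehat\phi(0)=0$ is the Dirac mass $2\mu t\,\delta$ appearing in $\partial_\xi^2F_\mu$ (formula (\ref{dos})) --- again coming from $\mu|\xi|$, not from the phase. Without these identifications your sketch cannot justify either stated threshold.

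A second, smaller gap concerns the bilinear step: the bound $\||x|^ru^2\|\lesssim\|u\|_{H^s}\||x|^ru\|$ implicitly uses $\|u\|_{L^\infty}\lesssim\|u\|_{H^s}$, which fails for $s\leq 1/2$, whereas the theorem permits $s\geq r$ with $r$ arbitrarily small. The paper closes this using the parabolic smoothing estimate (\ref{estimativaunmedio+}), $\|u(\tau)\|_{1/2+}\lesssim\|\phi\|_{s}\,\tau^{(s-(1/2+))/3}$, and then checks that this time singularity is integrable against the $(t-\tau)^{-1/3}$ loss coming from $\partial_x$; it also replaces your Gronwall step by an absorption argument on the local existence interval, iterated using the global $H^s$ bound. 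Finally, the interpolation machinery you defer to at the end is not how the paper proceeds: everything is done through the pointwise characterization $\|f\|_{b,2}\simeq\|f\|+\|\mathcal{D}^bf\|$ of Theorem \ref{derivaStein} together with the explicit formulas for $\partial_\xi F_\mu$ and $\partial_\xi^2F_\mu$, split case by case over the ranges $b\in(0,1/2]$, $(1/2,1)$, $[1,3/2)$, $[3/2,2)$ and $[2,5/2)$.
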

\begin{teor}\label{contunica1}
Let $u\in C([0,T]; Z_{1,1})$ be a solution of the \text{IVP} (\ref{npbo}). If there exist two different times $t_1$, $t_2 \in [0,T]$ such that 
\begin{equation}\label{cont1}
u(\cdot, t_j)\in Z_{3/2,3/2}, \qquad j=1,2, \quad \text{then}\quad \widehat{\phi}(0)=0, \qquad (\text{so}\quad u(\cdot)\in \dot{Z}_{3/2,3/2}).
\end{equation} 
\end{teor}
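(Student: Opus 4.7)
The plan is to exploit the Duhamel formula starting at time $t_1$ and to extract a non-removable low-frequency singularity that forces the zero-frequency Fourier mode to vanish. First note that since $u\in C([0,T];H^1)$, integrating the equation (\ref{npbo}) in $x$ (each of the terms $uu_x$, $\mathcal{H}u_{xx}$, $\mathcal{H}u_x$, $\mathcal{H}u_{xxx}$ integrates to zero) gives $\widehat{u}(0,t)=\widehat{\phi}(0)$ for every $t\in[0,T]$; hence it suffices to show $\widehat{u}(0,t_1)=0$. Assume $t_1<t_2$. On the Fourier side the Duhamel formula from time $t_1$ reads
\begin{equation*}
\widehat{u}(\xi,t_2)=e^{(t_2-t_1)q(\xi)}\,\widehat{u}(\xi,t_1)-\int_{t_1}^{t_2}e^{(t_2-\tau)q(\xi)}\,i\xi\,\widehat{(u^2/2)}(\xi,\tau)\,d\tau,
\end{equation*}
with symbol $q(\xi)=i|\xi|\xi+\mu|\xi|-\mu|\xi|^3$. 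Via the standard identification of $L^2(|x|^{2r}dx)$ with fractional Sobolev regularity of the Fourier transform (valid in the range $r\in(0,5/2)$ modulo the value at zero), the hypothesis $u(t_j)\in Z_{3/2,3/2}$ gives $\widehat{u}(\cdot,t_j)\in H^{3/2}(\mathbb{R})$ for $j=1,2$.

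The first task is to prove that the nonlinear Duhamel integral belongs to $H^{3/2}$ on the Fourier side. The factor $i\xi$ inside the integrand vanishes at $\xi=0$, absorbing the worst low-frequency singularity of the symbol. Combined with $u\in C([0,T];Z_{1,1})$ (which yields $u^2\in C([0,T];H^1)$ and $xu^2\in C([0,T];L^2)$, so that $\widehat{u^2}\in C([0,T];H^1)$) and with the high-frequency decay $|e^{(t_2-\tau)q(\xi)}|\leq Ce^{-c(t_2-\tau)|\xi|^3}$ for large $|\xi|$, this produces the needed $H^{3/2}$-control by the same fractional Leibniz and commutator identities employed in the weighted-space analysis leading to Theorem \ref{pre}. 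Consequently the linear piece $e^{(t_2-t_1)q(\xi)}\widehat{u}(\xi,t_1)$ must also belong to $H^{3/2}$.

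To analyze this linear piece, fix a smooth cutoff $\chi\in C_c^\infty(\mathbb{R})$ with $\chi(0)=1$ supported near $0$, and split $\widehat{u}(\xi,t_1)=\widehat{u}(0,t_1)+g(\xi)$ with $g(0)=0$. Because $\widehat{u}(\cdot,t_1)\in H^{3/2}\hookrightarrow C^1(\mathbb{R})$, the localized function $\chi g$ lies in $H^{3/2}$ and vanishes to first order at $\xi=0$; together with the Taylor expansion $e^{(t_2-t_1)q(\xi)}=1+(t_2-t_1)(\mu|\xi|+i|\xi|\xi-\mu|\xi|^3)+r(\xi)$, where $\chi\,r\in H^{3/2}$, one checks that the contribution $e^{(t_2-t_1)q(\xi)}\chi(\xi)g(\xi)$ stays in $H^{3/2}$. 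The remaining term equals
\begin{equation*}
\widehat{u}(0,t_1)\,\chi(\xi)\,e^{(t_2-t_1)q(\xi)} = \widehat{u}(0,t_1)\Bigl[\chi(\xi)+(t_2-t_1)\mu\,\chi(\xi)|\xi|+(\text{terms in }H^{3/2})\Bigr],
\end{equation*}
and contains the localized Lipschitz singularity $\chi(\xi)|\xi|$, whose Fourier transform behaves like $-2/\eta^2$ at infinity and whose $H^{3/2}$-norm diverges logarithmically, while $\chi|\xi|\xi$ and $\chi|\xi|^3$ do belong to $H^{3/2}$ (their Fourier transforms decay as $\eta^{-3}$ and $\eta^{-4}$ respectively). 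The only way the right-hand side can be in $H^{3/2}$ is thus $\widehat{u}(0,t_1)(t_2-t_1)\mu=0$; since $\mu>0$ and $t_1\neq t_2$, this forces $\widehat{u}(0,t_1)=\widehat{\phi}(0)=0$.

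The main technical obstacle is the second paragraph: establishing the $H^{3/2}$-control of the nonlinear Duhamel integral. The delicate point is that $D^{3/2}_\xi$ has to be applied to a product involving both the non-smooth semigroup symbol and $\widehat{u^2}(\xi,\tau)$, which does not vanish at $\xi=0$; the cancellation provided by the derivative factor $i\xi$ and the dissipative smoothing of $e^{-\mu(t_2-\tau)|\xi|^3}$, combined with fractional Leibniz/KPV-type commutator estimates already developed in the weighted-space framework for Theorem \ref{pre}, supplies the gain in regularity needed to close the argument.
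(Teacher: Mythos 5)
Your strategy coincides in essence with the paper's: run Duhamel between the two times, show the nonlinear integral keeps the $Z_{3/2,3/2}$ decay, and locate the obstruction in the low--frequency singularity that the dissipative part $\mu|\xi|$ of the symbol introduces. The paper packages this obstruction one derivative down: it writes the weight as $D_{\xi}^{1/2}\partial_{\xi}$, so the offending term is $\mu t\,\mathrm{sgn}(\xi)F_{\mu}(t,\xi)\widehat{\phi}(\xi)$, whose jump of size $2\mu t\,\widehat{\phi}(0)$ at $\xi=0$ is incompatible with $\mathcal{D}^{1/2}\in L^2_{loc}$ by Proposition \ref{I1}; your observation that $\chi(\xi)|\xi|\notin H^{3/2}$ is the same obstruction, since $\partial_{\xi}(\chi|\xi|)=\chi\,\mathrm{sgn}(\xi)+\chi'|\xi|$ and the second summand is harmless. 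So the route is the same, only the bookkeeping differs.

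Two points need attention. First, the embedding $H^{3/2}(\mathbb{R})\hookrightarrow C^1(\mathbb{R})$ that you invoke is false ($H^s\hookrightarrow C^1$ requires $s>3/2$ strictly), so you cannot conclude that $\chi g$ ``vanishes to first order.'' The step it supports is nevertheless repairable: what you actually need is $|\xi|\chi(\xi)g(\xi)\in H^{3/2}$ for $g\in H^{3/2}$ with $g(0)=0$, and since $H^{3/2}\hookrightarrow C^{0,\alpha}$ for every $\alpha<1$ one has $|g(\xi)|\lesssim|\xi|^{\alpha}$, whence $g\,\mathcal{D}^{1/2}(\mathrm{sgn})=2|\xi|^{-1/2}g$ is locally square integrable and the Leibniz rule (\ref{productostein}) closes the estimate for $\partial_{\xi}(|\xi|g)=\mathrm{sgn}(\xi)\,g+|\xi|g'$. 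Second, the claim that the nonlinear Duhamel integral lies in $H^{3/2}$ on the Fourier side is where essentially all the work sits: the paper spends the bulk of its Section 5 on the commutator decompositions (\ref{chiA})--(\ref{chiB}) and the resulting bounds, which consume not only $\|u\|_{L_T^{\infty}H_x^1}$ and $\|xu\|_{L_T^{\infty}L_x^2}$ but also the higher weighted norms furnished by the persistence results (Propositions \ref{xbuenl2}--\ref{xbuenl21<b<3/2}) applied to the datum $u(t_1)\in Z_{3/2,3/2}$. Deferring this to ``the machinery of Theorem \ref{pre}'' is acceptable as an outline, but as written it is an assertion, not a proof, and it is precisely the part of the argument where the hypotheses $u\in C([0,T];Z_{1,1})$ and $u(t_1)\in Z_{3/2,3/2}$ are actually spent.
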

\begin{teor}\label{contunica2}
Let $u\in C([0,T]; \dot{Z}_{2,2})$ be a solution of the \text{IVP} (\ref{npbo}). If there exist three different times $t_1$, $t_2$, $t_3 \in [0,T]$ such that 
\begin{equation}\label{cont2}
u(\cdot, t_j)\in Z_{5/2,5/2}, \qquad j=1,2,3, \quad \text{then there exists $t^* >t_1$ such that }\quad u(x,t)\equiv 0, \text{for all } \,t\geq t^* .
\end{equation} 
\end{teor}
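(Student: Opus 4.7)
My plan is to extract algebraic constraints from the non-smoothness at $\xi=0$ of the Fourier symbol of the linear part of (\ref{npbo}),
\begin{equation*}
\sigma(\xi)=\mu|\xi|(1-\xi^2)+i|\xi|\xi.
\end{equation*}
Relabeling if necessary, assume $t_1<t_2<t_3$. The strategy is to express $\hat{u}(\cdot,t)$ for $t\geq t_1$ via the integral equation restarted at $t_1$,
\begin{equation*}
\hat{u}(\xi,t)=e^{(t-t_1)\sigma(\xi)}\hat{u}(\xi,t_1)-\frac{i}{2}\int_{t_1}^t e^{(t-\tau)\sigma(\xi)}\,\xi\,\widehat{u^2}(\xi,\tau)\,d\tau,
\end{equation*}
and compute the jump of $\partial_\xi^2\hat{u}(\cdot,t)$ at $\xi=0$ in closed form. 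The key ingredients are the one-sided values $\sigma'(0^\pm)=\pm\mu$ and $\sigma''(0^\pm)=\pm 2i$, which yield jumps of $2s\mu$ and $4is$ at the origin for $\partial_\xi e^{s\sigma}$ and $\partial_\xi^2 e^{s\sigma}$ respectively, together with the consequences of $u\in C([0,T];\dot{Z}_{2,2})$: $\hat{u}(0,\tau)=0$, $\widehat{u^2}(0,\tau)=\|u(\cdot,\tau)\|_{L^2}^2$, and uniform-in-$\tau$ finiteness of $\int|x|^k u(x,\tau)^2\,dx$ for $k\leq 2$ from a standard Cauchy--Schwarz splitting.

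Applying Leibniz termwise and observing that the continuous factors $\hat{u}(\xi,t_1)$ and $\xi\widehat{u^2}(\xi,\tau)$ vanish at $\xi=0$ (which annihilates the leading $4is$-jump contribution), I expect
\begin{equation*}
\text{jump at }\xi=0\text{ of }\partial_\xi^2\hat{u}(\cdot,t)=4(t-t_1)\mu\,\partial_\xi\hat{u}(0,t_1)-2i\mu\int_{t_1}^t(t-\tau)\,\|u(\cdot,\tau)\|_{L^2}^2\,d\tau.
\end{equation*}
Because $u(\cdot,t_j)\in Z_{5/2,5/2}$ forces $\hat{u}(\cdot,t_j)\in H^{5/2}(\mathbb{R})$, whose second derivative lies in $H^{1/2}(\mathbb{R})$ and therefore admits no jump discontinuity, the right-hand side must vanish at $t=t_2,t_3$. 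Setting $a:=\int x\,u(x,t_1)\,dx\in\mathbb{R}$ (so $\partial_\xi\hat{u}(0,t_1)=-ia$) and $A(t):=\int_{t_1}^t(t-\tau)\|u(\cdot,\tau)\|_{L^2}^2\,d\tau$, this collapses to $A(t_j)=-2(t_j-t_1)\,a$ for $j=2,3$.

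Since $A$ is non-negative and convex on $[t_1,T]$ with $A(t_1)=A'(t_1)=0$, the quotient $g(t):=A(t)/(t-t_1)$ is non-decreasing on $(t_1,T]$. The identities $g(t_2)=g(t_3)=-2a$ then force $g$ to be constant on $[t_2,t_3]$, which in turn gives $A''(t)=\|u(\cdot,t)\|_{L^2}^2\equiv 0$ on $[t_2,t_3]$ and hence $u\equiv 0$ there. Forward uniqueness of the flow supplied by Theorem \ref{localresult}, applied with zero initial data at $t_2$, then extends this to $u(x,t)\equiv 0$ for all $t\geq t_2$, yielding the desired conclusion with $t^*$ any value larger than both $t_1$ and $t_2$.

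The main technical obstacle I anticipate is the rigorous justification of the jump identity: one must legitimately commute $\partial_\xi^2$ with the Duhamel $\tau$-integral, verify that the smoother piece $-\mu|\xi|^3$ of $\sigma$ contributes no additional jump (since it is $C^2$ at $0$), and make the Leibniz step rigorous even though $\hat{u}(\cdot,t_1)$ sits only in $H^{5/2}$ rather than classically $C^2$---its second derivative lies in $H^{1/2}$, which is enough since $H^{1/2}$ functions carry no jumps. All of this rests on the continuity of $\|u(\cdot,\tau)\|_{\dot{Z}_{2,2}}$ in $\tau$ and the embedding $\dot{Z}_{2,2}\hookrightarrow L^2(|x|\,dx)\cap L^2(x^2\,dx)\cap L^1(\mathbb{R})$ used to control the weighted moments appearing in the computation.
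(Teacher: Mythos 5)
Your strategy is in essence the paper's: the obstruction to $u(\cdot,t_j)\in Z_{5/2,5/2}$ is exactly the $\mathrm{sgn}(\xi)$--type singularity of $\partial_\xi^2\bigl(F_\mu(t,\xi)\widehat u(\xi,t_1)\bigr)$ and of the corresponding Duhamel term at $\xi=0$, the coefficients you extract ($4\mu(t-t_1)\partial_\xi\widehat u(0,t_1)$ and $-2i\mu\int_{t_1}^t(t-\tau)\|u(\tau)\|^2d\tau$) are precisely the paper's $K_8$ and $\mathcal K_9$ after the cancellations (\ref{R})--(\ref{s3}), and your constraint $A(t_j)=-2(t_j-t_1)a$ is literally equivalent to the paper's $\int_{t_1}^{t_j}\bigl(\int xu\,dx\bigr)dt'=0$, since $A(t)=2\int_{t_1}^{t}\bigl(h(s)-h(t_1)\bigr)ds$ with $h(s)=\int xu(x,s)dx$ and $h'=\tfrac12\|u\|^2$ by (\ref{intxnpbo1}). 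Your convexity argument on $A(t)/(t-t_1)$ and the paper's mean-value-theorem-plus-monotonicity argument are two packagings of the same endgame, and both are correct.

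The genuine gap is in the step ``the right-hand side must vanish at $t=t_2,t_3$ because $\partial_\xi^2\widehat u(\cdot,t_j)\in H^{1/2}$ carries no jump.'' That inference requires writing $\partial_\xi^2\widehat u(\cdot,t_j)$ as (singular part with an honest jump) plus (remainder), and then knowing that the \emph{remainder belongs locally to} $H^{1/2}$, not merely that it ``has no jump'': several of the remainder terms, e.g. $F_\mu(t-t_1,\xi)\,\partial_\xi^2\widehat u(\xi,t_1)$ and $\int_{t_1}^tF_\mu(t-t',\xi)\partial_\xi^2\widehat z(t',\xi)\,dt'$, involve factors that are only in $H^{1/2}$ and need not possess one-sided limits at $\xi=0$, so ``jump of the sum equals sum of the jumps'' is not available, and neither is a purely pointwise argument. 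The correct formulation is through Theorem \ref{derivaStein} and Proposition \ref{I1}: one must show that $\mathcal D_\xi^{1/2}$ applied to each non-singular term is locally square integrable, which is done in the paper via the commutator estimate of Proposition \ref{simplestimate}, the bounds (\ref{dertres})--(\ref{dercuatro}), and the product rule (\ref{productostein}), term by term for $K_1,\dots,K_{11}$ and $\mathcal J_i,\mathcal K_i$; only then does the surviving $\mathrm{sgn}(\xi)\chi(\xi)$--coefficient have to vanish. You flag this as ``the main technical obstacle,'' but it is not a routine commutation or dominated-convergence check --- it is the substantive body of the proof, and your sketch supplies none of it. With that block filled in (essentially by importing the paper's estimates), your argument closes correctly.
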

\begin{rem} It is well-known that Rafael Iorio was the first to establish these type of results. More precisely, his results were obtained  in the context of the famous Benjamin-Ono equation for integer indexes $s,r$, see  \cite{Iorio1}, \cite{Iorio2}. Recently, Fonseca and Ponce, with the help of a carachterization of the classical Sobolev spaces  given by Stein in \cite{S} , extended these results for non-integer values, see \cite{FP}. Fonseca, Linares and Ponce obtained, with the same techniques, similar results for the dispersion generalized Benjamin-Ono equation in \cite{FLP}. For results regarding well-posenedness in these weighted spaces for other dispersive equations as gKdV, Zakharov-Kuznetsov, Benjamin, and Schr\"odinger  see \cite{FLPKDV}, \cite{BJM2} and \cite{FPA}, \cite{J}, \cite{NP},  respectively. 
\end{rem}
\begin{rem}   We note that \textbf{(ii)} and \textbf{(iii)} in Theorem \ref{pr} directly follow as corollaries of Theorems \ref{contunica1} and  \ref{contunica2}, respectively.

\end{rem} 

\subsection{Definitions and Notations}
Given $a$, $b$ positive numbers, $a\lesssim b$ means that there exists a positive constant $C$ such that $a\leq C b$. And we denote $a\sim b$ when, $a \lesssim  b$ and $b \lesssim a$. We will also denote $a\lesssim_{\lambda} b$ or $b\lesssim_{\lambda} a$, if the constant involved depends on some parameter $\lambda$. We will understand $\langle \cdot \rangle = (1+|\cdot|^2)^{1/2}$. We will denote $\widehat{u}(\xi,t)$, $\xi\in\mathbb{R}$, as the Fourier transform of $u(t)$ respect to the variable $x$. We will use the Sobolev spaces $H^s(\mathbb{R})$ equipped with the norm 
$$\nor{\phi}{s}= \nor{\langle \xi \rangle^s\,\widehat{\phi}(\xi)}{L^2(\mathbb{R})},$$
and when $s=0$ we denote the $L^2$ norm simply by $\|\phi\|_0=\|\phi\|$. The norm in the weighted Sobolev spaces is defined by
\begin{equation}\label{sobpeso}
\nora{f}{Z_{s,r}}{2}=\nora{f}{s}{2}+\nora{f}{L_r^2}{2}\, , 
\end{equation}
and $L_r^2(\mathbb{R})=L^2(|x|^{2r}\,dx)$ is the collection of all measurable functions $f:\mathbb{R}\to \mathbb{C}$ such that
\begin{equation}\label{ldospeso}
\nor{f}{L_r^2}=\|\langle x \rangle^r \,f(x)\|<\infty.
\end{equation}
Since the linear symbol of the npBO equation is $b_{\mu}(\xi)= i \xi |\xi| + \mu (|\xi|-|\xi|^3)$, for all $\xi \in \mathbb{R}$, we also denote by $S(t)\phi = e^{t (-\mathcal{H} \partial_x^2 -\mu (\mathcal{H} \partial_x+\mathcal{H} \partial_x^3))}\phi $, for all $t\geq 0$, the semigroup in $H^s(\mathbb{R})$ generated by the operator $-\mathcal{H} \partial_x^2 -\mu (\mathcal{H} \partial_x+\mathcal{H} \partial_x^3)$, i.e.,
\begin{equation}\label{semigroup}
\bigl(S(t)\phi \bigr)^{\wedge}(\xi)=e^{(i\xi|\xi|+\mu(|\xi|-|\xi|^3))t}\widehat{\phi}(\xi)=F_{\mu}(t,\xi)\widehat{\phi}(\xi),
\end{equation}
where $F_{\mu}(t,\xi)=e^{(i\xi|\xi|+\mu(|\xi|-|\xi|^3))t}$, for all $t\geq 0$.\\ \\
Let $0\leq T\leq 1$ and $s < 0$. We consider $X_T^s$ as the class of all the functions $u\in C\left([0,T];H^s(\mathbb{R})\right)$ such that
\begin{equation}\label{spacexts}
\nor{u}{X_T^s}:=\sup_{t\in (0,T]}\Bigl(\nor{u(t)}{s}+t^{|s|/3}\|u(t)\|)\Bigr)<\infty.
\end{equation}
These Banach spaces are an adaptation made by Pilod \cite{P}, of the spaces originally presented by Dix in \cite{Dix}.


\setcounter{equation}{0}
\section{Preliminary estimates}

We first recall some important lemmas which were proved in \cite{PR} and that will also be useful in our arguments
\begin{lema} \label{emuc0enhs} (\cite{PR})
Let $s\in \mathbb{R}$. 
\begin{description}
\item[(i)] $S:[0,\infty)\longrightarrow \textbf{B}(H^s(\mathbb{R}))$ is
a $C^0$-semigroup in $H^s(\mathbb{R})$. Moreover,
\begin{equation}
\nor{S(t)}{\textbf{B}(H^s(\mathbb{R}))}\leq e^{\mu t}. \label{cotaemu}
\end{equation}
\item[(ii)] Let $t>0$ and $\lambda \geq 0$ be given. Then, $S(t)\in \textbf{B}(H^s(\mathbb{R}),H^{s+\lambda}(\mathbb{R}))$ and
\begin{align}
\nor{S(t)\phi}{s+\lambda}\leq C_{\lambda}(e^{\mu t}+(\mu t)^{-\lambda /3})\nor{\phi}{s}\;,  \label{regulariza}
\end{align}
where $C_{\lambda}$ is a constant depending only on $\lambda$.
\item[(iii)] Let $\phi \in H^s(\mathbb{R})$, then $u(t)=S(t)\phi$ is the unique solution of the linear IVP associated to (\ref{npbo}).
\end{description}
\end{lema}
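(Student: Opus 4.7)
The plan is to move everything to the Fourier side, where $S(t)$ is nothing but multiplication by the explicit symbol $F_\mu(t,\xi)=e^{tb_\mu(\xi)}$ with $b_\mu(\xi)=i\xi|\xi|+\mu(|\xi|-|\xi|^3)$, and to read off each assertion from properties of this symbol. The crucial observation is that $\mathrm{Re}\, b_\mu(\xi)=\mu(|\xi|-|\xi|^3)$ is bounded above by a universal constant (its maximum on $\xi\ge 0$ is attained at $\xi=1/\sqrt{3}$), so $|F_\mu(t,\xi)|\le e^{\mu t}$ for every $t\ge 0$ and $\xi\in\mathbb{R}$; moreover, $|F_\mu(t,\xi)|\le e^{-\mu|\xi|^3 t/2}$ whenever $|\xi|\ge\sqrt{2}$, since in that range $|\xi|^3-|\xi|\ge |\xi|^3/2$.

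For (i), the identities $S(0)=I$ and $S(t+s)=S(t)S(s)$ are immediate from the corresponding identities for $F_\mu$, and the operator-norm bound (\ref{cotaemu}) follows from the pointwise estimate $|F_\mu(t,\xi)|\le e^{\mu t}$ via Plancherel. For strong continuity at $t=0$, I would apply dominated convergence to $\langle\xi\rangle^{2s}|F_\mu(t,\xi)-1|^2|\widehat{\phi}(\xi)|^2$, which tends pointwise to $0$ and is dominated by $(1+e^{\mu})^2\langle\xi\rangle^{2s}|\widehat{\phi}(\xi)|^2\in L^1$; the semigroup property then upgrades continuity at $0$ to continuity on all of $[0,\infty)$.

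For (ii), writing $\|S(t)\phi\|_{s+\lambda}=\|\langle\xi\rangle^{\lambda}F_\mu(t,\xi)\cdot\langle\xi\rangle^{s}\widehat{\phi}(\xi)\|_{L^2}$, the claim reduces to the pointwise estimate $\langle\xi\rangle^{\lambda}|F_\mu(t,\xi)|\le C_\lambda(e^{\mu t}+(\mu t)^{-\lambda/3})$. On $|\xi|\le\sqrt{2}$ the factor $\langle\xi\rangle^{\lambda}$ is bounded by $3^{\lambda/2}$ and $|F_\mu(t,\xi)|\le e^{\mu t}$, so the first term absorbs this regime. On $|\xi|\ge\sqrt{2}$ I would invoke $|F_\mu(t,\xi)|\le e^{-\mu|\xi|^3 t/2}$ and optimize the one-variable expression $\rho^\lambda e^{-\mu\rho^3 t/2}$ over $\rho\ge 0$: its critical point is $\rho^3=2\lambda/(3\mu t)$, at which the value equals $(2\lambda/(3e))^{\lambda/3}(\mu t)^{-\lambda/3}$, yielding exactly the second term in the bound.

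For (iii), uniqueness follows from a Fourier-side ODE argument: if $u$ and $v$ both solve the linear IVP with the same datum $\phi$, then $w=u-v$ has $\widehat{w}(t,\xi)$ satisfying $\partial_t\widehat{w}=b_\mu(\xi)\widehat{w}$ with $\widehat{w}(0,\xi)=0$, so $\widehat{w}\equiv 0$. The only step that asks for even a moment's thought is the scalar optimization in (ii), which quantifies the parabolic smoothing coming from the $-|\xi|^3$ part of $\mathrm{Re}\,b_\mu$; everything else is routine once one commits to the Fourier-multiplier viewpoint.
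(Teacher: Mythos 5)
Your proposal is correct and follows essentially the same route the paper relies on: the lemma is quoted from \cite{PR}, but the decisive symbol estimate for part \textbf{(ii)} is reproduced in the paper as inequality (\ref{unoa}), whose proof uses exactly your splitting at $|\xi|=\sqrt{2}$, the bound $\xi-\xi^3\leq -\xi^3/2$ there, and the optimization of $\xi^{\lambda}e^{-a\xi^3/2}$ at $\xi_0=(2\lambda/(3a))^{1/3}$. Parts \textbf{(i)} and \textbf{(iii)} are handled by the standard Fourier-multiplier and ODE arguments you describe, so there is nothing to add.
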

\begin{lema}\label{lemdecaida}
Let $F_{\mu}(t,\xi)=e^{tb_{\mu}(\xi)}$ where $b_{\mu}(\xi)=i\xi|\xi|+\mu(|\xi|-|\xi|^3)$. Then,
\begin{align}
\partial_{\xi}F_{\mu}(t,\xi)&=t[\mu \,sgn(\xi)+|\xi|(2i-3\mu \xi)]F_{\mu}(t,\xi)  \label{uno} \\
\partial_{\xi}^2F_{\mu}(t,\xi)&=2\mu t\delta +t[2i\,sgn(\xi)-6\mu |\xi|]F_{\mu}(t,\xi)+ \notag\\
&+t^2[\mu \,sgn(\xi)+|\xi|(2i-3\mu \xi)]^2F_{\mu}(t,\xi) \label{dos},
\end{align}
where $\delta$ is Dirac's delta distribution.
\end{lema}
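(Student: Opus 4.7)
My plan is to derive both formulas by direct differentiation of the exponential, treating the derivatives of $|\xi|$ and $\mathrm{sgn}(\xi)$ carefully in the sense of distributions. Everything hinges on computing $\partial_\xi b_\mu$ and $\partial_\xi^2 b_\mu$ correctly, and then invoking the chain rule $\partial_\xi F_\mu = t(\partial_\xi b_\mu) F_\mu$ together with the product rule for the second derivative.

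First I would write $b_\mu(\xi) = i\xi|\xi| + \mu|\xi| - \mu|\xi|^3$ and compute, for $\xi\neq 0$,
\begin{equation*}
\partial_\xi(\xi|\xi|) = 2|\xi|,\qquad \partial_\xi|\xi| = \mathrm{sgn}(\xi),\qquad \partial_\xi|\xi|^3 = 3\xi|\xi|,
\end{equation*}
so that
\begin{equation*}
\partial_\xi b_\mu(\xi) = 2i|\xi| + \mu\,\mathrm{sgn}(\xi) - 3\mu\xi|\xi| = \mu\,\mathrm{sgn}(\xi) + |\xi|(2i-3\mu\xi).
\end{equation*}
Multiplying by $tF_\mu(t,\xi)$ gives (2.3). (Since $b_\mu$ is continuous at $\xi=0$, no distributional correction appears at this stage.)

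For (2.4) I would apply the product rule to $\partial_\xi F_\mu = t(\partial_\xi b_\mu) F_\mu$ to obtain
\begin{equation*}
\partial_\xi^2 F_\mu = t(\partial_\xi^2 b_\mu) F_\mu + t^2 (\partial_\xi b_\mu)^2 F_\mu,
\end{equation*}
and then differentiate $\partial_\xi b_\mu$ once more. The delicate step — and the only real subtlety in the lemma — is that $\partial_\xi \mathrm{sgn}(\xi) = 2\delta$ in $\mathcal{S}'(\mathbb{R})$, while $\partial_\xi|\xi| = \mathrm{sgn}(\xi)$ produces no mass at the origin and $\partial_\xi(\xi|\xi|) = 2|\xi|$ likewise. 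Hence
\begin{equation*}
\partial_\xi^2 b_\mu(\xi) = 2\mu\delta + 2i\,\mathrm{sgn}(\xi) - 6\mu|\xi|
\end{equation*}
as a distribution.

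Finally, to justify the term $2\mu t\delta$ in (2.4) I would observe that $b_\mu(0)=0$, so $F_\mu(t,0)=1$; therefore the product $\delta\cdot F_\mu(t,\cdot)$ makes sense and equals $\delta$. Substituting into the displayed identity for $\partial_\xi^2 F_\mu$ yields the stated formula. The main (and only) obstacle is bookkeeping the distributional $\delta$ correctly when differentiating $\mathrm{sgn}(\xi)$; once this is handled, both identities are immediate computations.
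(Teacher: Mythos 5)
Your computation is correct: the pointwise derivatives of $\xi|\xi|$, $|\xi|$ and $|\xi|^3$ are right, the first identity follows since $F_{\mu}(t,\cdot)$ is continuous and piecewise $C^1$ so no singular part appears, and the delta term in the second identity is correctly accounted for by $\partial_{\xi}\,\mathrm{sgn}(\xi)=2\delta$ together with $\delta\cdot F_{\mu}(t,\cdot)=F_{\mu}(t,0)\,\delta=\delta$. The paper itself gives no proof (the lemma is recalled from \cite{PR}), but your argument is the standard direct verification and is complete.
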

\begin{lema} \label{emuc0enfsr}
Let $\mu >0$. $S:[0,+\infty )\longrightarrow \textbf{B}(Z_{s,r})$ is a $C^0$-semigroup for \\ $s,r \in \mathbb{N}$, $s\geq r$ and  satisfies that \\ \\
(a.) If $r=0,1$
\begin{align}
\nor{S(t)\phi}{Z_{s,r}} \leq \Theta_r(t)\nor{\phi}{Z_{s,r}}\quad \text{for all  } \phi \in Z_{s,r} \label{e6}
\end{align}
where $\Theta_r(t)$ has the form
$$p_{\mu ,r}(t)e^{\mu t}+\sum\limits_{l=1}^{3r-1}k_{l,\mu}t^{l/3}$$
such that $k_{l,\mu}$ is a constant which depends on $\mu$ and $p_{\mu ,r}(t)$ is a polynomial in $t$ of degree $r$ with positives coefficients depending only on $\mu $. \newline \newline
b.) If $r\geq 2$ and $\phi \in Z_{s,r}$, $S \in C([0,\infty];Z_{s,r})$, if and only if,
\begin{align}(\partial_{\xi}^j \widehat{\phi})(0)=0, \qquad \qquad j=0,1,2,\cdots,r-2. \label{e7}
\end{align}
In this case, an estimative as (\ref{e6}) holds.
\end{lema}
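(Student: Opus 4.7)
The plan is to reduce $\|S(t)\phi\|_{L^2_r}$ to a Fourier-side computation via Plancherel: for integer $r$, $\|x^r f\|=\|\partial_\xi^r\hat f\|$, so the weighted piece of the $Z_{s,r}$-norm becomes $\|\partial_\xi^r(F_\mu(t,\xi)\hat\phi(\xi))\|$. The $H^s$ piece is already controlled by Lemma \ref{emuc0enhs}(i), so only the weighted piece is new. By Leibniz, $\partial_\xi^r(F_\mu\hat\phi)=\sum_{k=0}^{r}\binom{r}{k}(\partial_\xi^k F_\mu)(\partial_\xi^{r-k}\hat\phi)$; Lemma \ref{lemdecaida} together with an induction on $k$ expresses $\partial_\xi^k F_\mu$ as $F_\mu$ multiplied by a polynomial in $\text{sgn}(\xi)$, $|\xi|$, $\xi$, and $t$, plus distributional corrections that arise from differentiating $\text{sgn}(\xi)$ and that involve $\delta$ and its derivatives.

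For part (a) with $r=0$ the bound is just (\ref{cotaemu}). For $r=1$, applying (\ref{uno}) gives
\begin{equation*}
\partial_\xi(F_\mu\hat\phi)=F_\mu\,\partial_\xi\hat\phi+t\bigl[\mu\,\text{sgn}(\xi)+|\xi|(2i-3\mu\xi)\bigr]F_\mu\,\hat\phi,
\end{equation*}
and I would control each summand in $L^2$ by using the pointwise bound $\||\xi|^jF_\mu(t,\cdot)\|_{L^\infty}\lesssim e^{\mu t}+(\mu t)^{-j/3}$, which follows by splitting $\xi$ into $|\xi|\le 2$ (where $|F_\mu|\le e^{\mu t}$) and $|\xi|>2$ (where $|\xi|-|\xi|^3\le -|\xi|^3/2$, giving strong dispersive decay). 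The prefactor $t$ combined with $(\mu t)^{-1/3}$ and $(\mu t)^{-2/3}$ produces exactly the $t^{2/3}$ and $t^{1/3}$ terms in $\Theta_1(t)$, while the $j=0$ parts contribute to the polynomial-times-exponential piece $p_{\mu,1}(t)e^{\mu t}$.

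For part (b) with $r\ge 2$ the new feature is the term $2\mu t\,\delta(\xi)$ appearing in $\partial_\xi^2 F_\mu$ by (\ref{dos}); iterating the recursion $\partial_\xi^{k+1}F_\mu=\partial_\xi(\partial_\xi^k F_\mu)$, $\partial_\xi^r F_\mu$ picks up distributional terms of the form $c_{j,r}(\mu,t)\,\delta^{(j)}(\xi)$ for $j=0,\ldots,r-2$. Using the identity $\delta^{(j)}\hat\phi=\sum_{i=0}^{j}\binom{j}{i}(-1)^i (\partial_\xi^i\hat\phi)(0)\,\delta^{(j-i)}$, these singular pieces disappear distributionally if and only if $(\partial_\xi^i\hat\phi)(0)=0$ for $i=0,1,\ldots,r-2$. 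Under this assumption the remaining regular terms are handled exactly as in part (a) and yield a bound of the type (\ref{e6}); conversely, if some $(\partial_\xi^i\hat\phi)(0)\neq 0$ then $\partial_\xi^r(F_\mu\hat\phi)\notin L^2$ for any $t>0$, so $S(t)\phi\notin Z_{s,r}$, giving the necessity. Strong continuity $t\mapsto S(t)\phi$ then follows from the quantitative estimate together with a standard density and three-epsilon argument.

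The main obstacle I foresee is the bookkeeping of the $\delta$-distributional chain as $r$ grows: correctly tracking the coefficients $c_{j,r}(\mu,t)$ through the recursion and verifying that the prescribed moment conditions $j=0,\ldots,r-2$ eliminate \emph{all} singular contributions in $\partial_\xi^r(F_\mu\hat\phi)$. Once this algebraic step is in hand, the analytic core is just the $L^\infty$ decay estimate on $|\xi|^jF_\mu$ combined with Plancherel.
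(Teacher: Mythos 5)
This lemma is recalled from \cite{PR}; the paper itself gives no proof of it, so there is no internal argument to compare against. Judged on its own, your proposal is correct and is clearly the intended route: the paper records Lemma \ref{lemdecaida} (formulas (\ref{uno})--(\ref{dos})) and the decay bound (\ref{unoa}) precisely as the ingredients you deploy, namely Plancherel to pass to $\partial_{\xi}^{r}\bigl(F_{\mu}(t,\xi)\widehat{\phi}(\xi)\bigr)$, Leibniz plus the $L^{\infty}$ bound on $|\xi|^{j}|F_{\mu}|$ for part (a), and the $\delta$-distribution chain for part (b). The one place your write-up remains a sketch is the inductive bookkeeping you yourself flag: to get both directions of (\ref{e7}) one must verify that the singular part of $\partial_{\xi}^{r}\bigl(F_{\mu}(t,\xi)\widehat{\phi}(\xi)\bigr)$ is a \emph{triangular} combination of $\delta^{(j)}$, $0\leq j\leq r-2$, in which the coefficient of $\delta^{(r-2-i)}$ is a nonzero multiple of $t\,(\partial_{\xi}^{i}\widehat{\phi})(0)$ modulo terms annihilated by the lower-order moment conditions; the mechanism you describe (the product formula for $f\,\delta^{(j)}$ together with the recursion for $\partial_{\xi}^{k}F_{\mu}$) does close this step.
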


Regarding our study of the IVP  \eqref {npbo} in the weighted Sobolev spaces, $Z_{s,r}$, we recall the following characterization of the $L_s^p(\mathbb{R})=(1-\Delta)^{-s/2}L^p(\mathbb{R})$ spaces given in \cite{S}.
\begin{teor} \label{derivaStein} Let $b\in (0,1)$ and $2/(1+2b)<p<\infty$. Then $f\in L_b^p(\mathbb{R})$ if and only if
\begin{align}
(a) &f\in L^p(\mathbb{R}), \notag \\
(b) &\mathcal{D}^bf(x)=\Bigl(\int_{\mathbb{R}^n}\frac{|f(x)-f(y)|^2}{|x-y|^{n+2b}}\,dy\Bigr)^{1/2}\in L^p(\mathbb{R}), \label{derivadastein}
\end{align}
with
\begin{equation}\label{normasequivalentes}
\nor{f}{b,p}\equiv \nor{(1-\Delta)^{b/2}f}{L^p}\simeq \nor{f}{L^p}+\nor{D^bf}{L^p}\simeq \nor{f}{L^p}+\nor{\mathcal{D}^bf}{L^p}.
\end{equation}
Above we have used the notation $D^s=(\mathcal{H}\partial_x)^s$ for $s\in \mathbb{R}$. 
\end{teor}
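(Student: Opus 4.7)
The plan is to prove the two equivalences in \eqref{normasequivalentes} separately. The first one, $\nor{(1-\Delta)^{b/2}f}{L^p}\simeq \nor{f}{L^p}+\nor{D^b f}{L^p}$, is a standard Fourier multiplier statement: the ratios $(1+|\xi|^2)^{b/2}/(1+|\xi|^b)$ and its reciprocal are smooth away from the origin and satisfy the Mihlin--H\"ormander condition $|\xi^k m^{(k)}(\xi)|\lesssim 1$, so both act boundedly on $L^p(\mathbb{R})$ for every $1<p<\infty$. (The bounded phase factor arising from $\mathcal{H}$ in $D^b=(\mathcal{H}\partial_x)^b$ is absorbed into the constants, as $D^b$ has symbol $|\xi|^b$ up to a constant phase.)

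The substantive content is the second equivalence, and I would start with the $L^2$ case. By Plancherel, Fubini, and the elementary identity $\int_{\mathbb{R}}|e^{ih\xi}-1|^2 |h|^{-1-2b}\,dh=c_b|\xi|^{2b}$, one immediately obtains $\nor{\mathcal{D}^b f}{L^2}^2=c_b\nor{D^b f}{L^2}^2$. For general $p$ in the range $2/(1+2b)<p<\infty$ I would turn to Littlewood--Paley theory. Decomposing $f=\sum_j \Delta_j f$ with $\Delta_j$ localized at frequency $|\xi|\sim 2^j$, the Littlewood--Paley square function characterization gives
\begin{equation*}
\nor{D^b f}{L^p}\simeq \nor{\Bigl(\sum_{j} 2^{2jb}|\Delta_j f|^2\Bigr)^{1/2}}{L^p}.
\end{equation*}
To compare this with $\nor{\mathcal{D}^b f}{L^p}$, write $\mathcal{D}^b f(x)^2=\int|f(x+h)-f(x)|^2|h|^{-1-2b}\,dh$, expand each difference via the $\Delta_j$'s, and split the $h$-integration at $|h|\sim 2^{-j}$. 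In the regime $2^j|h|\lesssim 1$ use the Bernstein-type bound $|\partial_x\Delta_j f|\lesssim 2^j M(\Delta_j f)$ to extract a factor $(2^j|h|)^2$; in the complementary regime bound $|\Delta_j f(x+h)-\Delta_j f(x)|$ by $M(\Delta_j f)(x)+M(\Delta_j f)(x+h)$. Integrating in $h$ and reassembling yields a pointwise bound of the form
\begin{equation*}
\mathcal{D}^b f(x)\lesssim \Bigl(\sum_{j} 2^{2jb}|M(\Delta_j f)(x)|^2\Bigr)^{1/2},
\end{equation*}
together with a matching converse obtained by using differences $f(x+h)-f(x)$ to isolate each frequency annulus; the Fefferman--Stein vector-valued maximal inequality then transfers both sides to $L^p$.

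The main obstacle is the sharp lower bound $p>2/(1+2b)$: this is exactly the range in which the Fefferman--Stein $L^p(\ell^2)$ maximal inequality can be applied to the weighted sequence $(2^{jb}\Delta_j f)_j$ so as to close the estimate, and it is the threshold below which the small-$h$ portion of $\mathcal{D}^b f$ may fail to lie in $L^p$ even for Schwartz data. Producing the pointwise domination by the maximal operator uniformly in $j$ in a form that survives squaring, summing, and the vector-valued argument, and verifying that the constants do not blow up as $b\to 0$ or $b\to 1$, is the technical core of the proof.
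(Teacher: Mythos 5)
A preliminary remark: the paper does not prove Theorem \ref{derivaStein} at all --- it is stated as a known characterization of $L^p_b(\mathbb{R})$ and the reader is referred to Stein \cite{S} --- so the only meaningful comparison is with Stein's own argument. Within your sketch, the first equivalence via Mikhlin--H\"ormander (the multipliers $(1+\xi^2)^{b/2}/(1+|\xi|^b)$ and its reciprocal do satisfy $|m|+|\xi m'|\lesssim 1$, since $|\xi\,\partial_\xi|\xi|^b|=b|\xi|^b$) and the Plancherel identity at $p=2$ are both fine.

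The gap is in the core step for general $p$. The pointwise bound $\mathcal{D}^bf(x)\lesssim\bigl(\sum_j2^{2jb}\,(M\Delta_jf(x))^2\bigr)^{1/2}$ with the ordinary Hardy--Littlewood maximal function does not follow from the splitting you describe, and for $b<1/2$ it is actually false. In the regime $|h|\gtrsim 2^{-j}$ the quantity to be controlled is $\int_{|h|\ge 2^{-j}}|\Delta_jf(x+h)|^2\,|h|^{-1-2b}\,dh$, and summing over dyadic shells bounds it by $2^{2jb}\,M\bigl(|\Delta_jf|^2\bigr)(x)$, i.e.\ by the square of the $L^2$-based maximal function $M_2(\Delta_jf)=M(|\Delta_jf|^2)^{1/2}$, not by $(M\Delta_jf)^2$; there is no pointwise domination of $M_2$ by $M$ (a frequency-localized bump of width $2^{-j}$ centered at distance $R\gg2^{-j}$ from $x$ violates the $M$-version once $(2^jR)^{1-2b}\gg1$). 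Feeding $M_2$ into the vector-valued maximal inequality forces $p/2>1$, so your scheme closes only for $p>2$ and misses exactly the interesting range $2/(1+2b)<p\le 2$. Relatedly, your identification of the source of the threshold is incorrect: the Fefferman--Stein $L^p(\ell^2)$ inequality holds for every $1<p<\infty$ independently of $b$, so it cannot produce the condition $p>2/(1+2b)$. In Stein's proof that condition is precisely the sharp range of $L^p$-boundedness of the $g^*_\lambda$-function with $\lambda=1+2b$ in one dimension (namely $p>2/\lambda=2/(1+2b)$): one dominates $\mathcal{D}^b(J_b g)$ pointwise by $g^*_\lambda(g)$ plus harmless terms, $J_b$ being the Bessel potential. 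Without an ingredient of this type --- an $L^2$-average square function whose $L^p$ theory degenerates exactly at $p=2/(1+2b)$ --- the proposal is missing the idea that yields the stated range; the converse inequality $\|D^bf\|_{L^p}\lesssim\|f\|_{L^p}+\|\mathcal{D}^bf\|_{L^p}$ is likewise only asserted, not argued.
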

\begin{lema}\label{rprod}(See \cite{FP, NP})
Given $0<b<1$,
\begin{align}
\nor{\mathcal{D}^b(fg)}{}&\leq \nor{g\mathcal{D}^b f }{} + \nor{f \mathcal{D}^bg}{} \label{productostein} \\
\intertext{always that the right side is finite and, for any $t>0$,}
\mathcal{D}^b\Bigl(e^{it\xi |\xi|}\Bigr)&\leq C_b(t^{b/2}+t^b|\xi|^b). \label{cotabo}
\end{align}
\end{lema}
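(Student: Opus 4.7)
For the product inequality \eqref{productostein}, I would start from the pointwise definition
\[
\mathcal{D}^b(fg)(x)=\left(\int_{\mathbb{R}}\frac{|f(x)g(x)-f(y)g(y)|^2}{|x-y|^{1+2b}}\,dy\right)^{1/2}
\]
and split the numerator by adding and subtracting $f(x)g(y)$, writing $f(x)g(x)-f(y)g(y)=f(x)(g(x)-g(y))+g(y)(f(x)-f(y))$. Applying Minkowski's inequality in the weighted $L^2$ space with measure $|x-y|^{-(1+2b)}dy$ then produces the pointwise bound
\[
\mathcal{D}^b(fg)(x)\le |f(x)|\,\mathcal{D}^b g(x)+\left(\int_{\mathbb{R}}\frac{|g(y)|^2|f(x)-f(y)|^2}{|x-y|^{1+2b}}\,dy\right)^{1/2}.
\]
Taking $L^2_x$ norms, the first term gives $\|f\,\mathcal{D}^b g\|$, and for the second term I would use Fubini together with the symmetry of the kernel $|x-y|^{-(1+2b)}$ in $(x,y)$ to see that the $x$-integral yields exactly $(\mathcal{D}^b f(y))^2$, producing $\|g\,\mathcal{D}^b f\|$.

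For estimate \eqref{cotabo} I would fix $\xi$ and split the defining integral for $\mathcal{D}^b(e^{it\xi|\xi|})(\xi)$ into the short-range region $|\xi-\eta|\le\delta$ and the long-range region $|\xi-\eta|>\delta$ for a parameter $\delta>0$ to be optimized. On the far region the bound $|e^{it\xi|\xi|}-e^{it\eta|\eta|}|\le 2$ yields a contribution $\lesssim \delta^{-b}$. On the near region I would use the mean value theorem applied to $\eta\mapsto \eta|\eta|$, whose derivative is $2|\eta|$, to obtain $|e^{it\xi|\xi|}-e^{it\eta|\eta|}|\lesssim t(|\xi|+\delta)|\xi-\eta|$, giving a contribution $\lesssim t(|\xi|+\delta)\delta^{1-b}$. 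Hence
\[
\mathcal{D}^b(e^{it\xi|\xi|})(\xi)\lesssim \delta^{-b}+t(|\xi|+\delta)\,\delta^{1-b}.
\]

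The remaining step is to choose $\delta$ appropriately depending on the size of $|\xi|$ relative to $t^{-1/2}$: in the regime $|\xi|\le t^{-1/2}$ I pick $\delta=t^{-1/2}$ to make both terms equal to $t^{b/2}$, while in the regime $|\xi|>t^{-1/2}$ I pick $\delta=(t|\xi|)^{-1}$, which makes both terms comparable to $t^{b}|\xi|^{b}$. Summing these bounds gives the desired estimate $\mathcal{D}^b(e^{it\xi|\xi|})(\xi)\le C_b(t^{b/2}+t^b|\xi|^b)$.

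The main (mild) obstacle is the case analysis and choice of $\delta$ in the second part, since a single uniform choice is not sharp; the split at $|\xi|\sim t^{-1/2}$ is forced by the fact that the phase $t\xi|\xi|$ behaves differently for low and high frequencies. Apart from this, both estimates are elementary consequences of Stein's characterization \eqref{derivadastein} together with Minkowski's inequality and the mean value theorem.
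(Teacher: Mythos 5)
Your argument is correct: the Minkowski--Fubini symmetrization for \eqref{productostein} and the split of the Stein integral at $|\xi-\eta|\sim\delta$ with the mean value theorem, followed by the choice $\delta=t^{-1/2}$ for $|\xi|\le t^{-1/2}$ and $\delta=(t|\xi|)^{-1}$ otherwise (noting that the extra term $t\delta^{2-b}$ is controlled precisely because $t|\xi|^2\ge 1$ in the second regime), gives \eqref{cotabo}. The paper offers no proof of its own and defers to \cite{FP, NP}, where essentially this same argument is carried out, so your proposal matches the intended route.
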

\begin{lema} Let $\mu>0$, $t>0$ and $\lambda \geq 0$. Then,
\begin{equation}\label{unoa}
\nor{|\xi|^{\lambda}e^{\mu t (|\xi|-|\xi|^3)}}{L^{\infty}}\leq C_{\lambda}\bigl(e^{\mu t}+(\mu t)^{-\lambda/3}\bigr)
\end{equation}
\end{lema}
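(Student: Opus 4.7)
The plan is to reduce to a one-variable calculus problem. Set $x=|\xi|\ge 0$ and consider
$$
g(x):=x^{\lambda}\,e^{\mu t(x-x^{3})}, \qquad x\ge 0.
$$
Since $g$ is even in $\xi$, the supremum over $\mathbb{R}$ equals $\sup_{x\ge 0} g(x)$. I will split the half-line at $x=\sqrt{2}$: on the compact piece $[0,\sqrt{2}]$ I bound $g$ by $e^{\mu t}$ directly, and on $[\sqrt{2},\infty)$ the cubic term in the exponent takes over, producing the decay factor $(\mu t)^{-\lambda/3}$ via a single elementary optimization.

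First I would handle $x\in[0,\sqrt{2}]$. For $x\in[0,1]$ one has $x-x^{3}\le x\le 1$, while for $x\in[1,\sqrt{2}]$ one has $x-x^{3}\le 0$; in either case $e^{\mu t(x-x^{3})}\le e^{\mu t}$. Combined with $x^{\lambda}\le 2^{\lambda/2}$, this yields $g(x)\le 2^{\lambda/2}e^{\mu t}$ on this interval, which is absorbed by the first term on the right-hand side of the claimed inequality.

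Next, for $x\ge\sqrt{2}$ we have $x^{2}\ge 2$, hence $x\le x^{3}/2$, so
$$
x-x^{3}=-\left(x^{3}-x\right)\le -\tfrac{1}{2}x^{3}, \qquad x\ge\sqrt{2}.
$$
Therefore $g(x)\le x^{\lambda}e^{-\mu t\,x^{3}/2}$ on $[\sqrt{2},\infty)$, and I drop the lower endpoint and bound this by $\sup_{x\ge 0}x^{\lambda}e^{-\mu t\,x^{3}/2}$. Differentiating $h(x)=x^{\lambda}e^{-\mu t\,x^{3}/2}$ (or, equivalently, changing variables $y=\mu t\,x^{3}/2$, which puts the sup in the form $(2/(\mu t))^{\lambda/3}\sup_{y\ge 0}y^{\lambda/3}e^{-y}$) gives the critical point $x_{*}=\bigl(2\lambda/(3\mu t)\bigr)^{1/3}$ and the value
$$
h(x_{*})=\left(\tfrac{2\lambda}{3\mu t}\right)^{\lambda/3}e^{-\lambda/3}=C_{\lambda}\,(\mu t)^{-\lambda/3}.
$$
Adding the two regional bounds gives the claim. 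No step should be genuinely delicate; the only point that requires a little care is choosing a splitting point (here $\sqrt{2}$) large enough that $x-x^{3}\le -x^{3}/2$ holds beyond it, so that the cubic decay is available with a universal constant independent of $\lambda$ and $\mu t$.
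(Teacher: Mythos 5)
Your proof is correct and follows essentially the same route as the paper: split at $\sqrt{2}$, bound the compact piece by $2^{\lambda/2}e^{\mu t}$, use $x-x^{3}\le -x^{3}/2$ for $x\ge\sqrt{2}$, and optimize $x^{\lambda}e^{-\mu t x^{3}/2}$ at $x_{*}=\bigl(2\lambda/(3\mu t)\bigr)^{1/3}$ to extract the $(\mu t)^{-\lambda/3}$ factor. Nothing to add.
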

\begin{proof} Let $a>0$ and $\lambda\geq 0$. Since $\xi^{\lambda}e^{a(\xi-\xi^3)}\leq 2^{\lambda/2}e^a$, for $0\leq \xi\leq \sqrt{2}$, and $\xi-\xi^3\leq -\xi^3/2$, for $\xi\geq \sqrt{2}$, we get
\begin{equation}\label{unob}
\sup_{\xi\geq 0} \xi^{\lambda}e^{a(\xi-\xi^3)}\leq c_{\lambda}\Bigl(e^{a} + \sup_{\xi\geq \sqrt{2}}\xi^{\lambda}e^{-a\xi^3/2}\Bigr)
\end{equation}
Taking $g(\xi):= \xi^{\lambda}e^{-a\xi^3/2}$, we note that $g'(\xi)=0$ if and only if $\xi=\sqrt[3]{\frac{2\lambda}{3a}}:=\xi_0$. So, $g(\xi)\leq g(\xi_0)=\bigl(\frac{2\lambda}{3a}\bigr)^{\lambda/3}e^{-\lambda/3}$, for $\xi\geq 0$. Then, from (\ref{unob}), we have that
$$\sup_{\xi\geq 0} \xi^{\lambda}e^{a(\xi-\xi^3)}\leq C_{\lambda}\bigl(e^{a}+a^{-\lambda/3}\bigr)$$
which implies (\ref{unoa}).
\end{proof}
\begin{lema}
Let $b\in (0,1)$ and $h$ a measurable function on $\mathbb{R}$ such that $h$, $h' \in L^{\infty}$. Then,
\begin{equation}\label{dosa}
\mathcal{D}^bh(x)\leq C_b\bigl(\nor{h}{L^{\infty}}+\nor{h'}{L^{\infty}}\bigr)\quad \forall x\in \mathbb{R}.
\end{equation}
\end{lema}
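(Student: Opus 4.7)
The plan is to estimate $\mathcal{D}^b h(x)$ directly from its definition by splitting the defining integral into a near-diagonal piece $\{y:|x-y|\leq 1\}$ and a far-field piece $\{y:|x-y|> 1\}$, and bounding each piece with a different elementary inequality. Since the ambient dimension here is $n=1$, the definition reads
\[
\mathcal{D}^b h(x)=\left(\int_{\mathbb{R}}\frac{|h(x)-h(y)|^2}{|x-y|^{1+2b}}\,dy\right)^{1/2}.
\]

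For the near-diagonal contribution I would use the mean value theorem, which gives $|h(x)-h(y)|\le \nor{h'}{L^\infty}|x-y|$. Plugging this in yields an integrand controlled by $\nor{h'}{L^\infty}^2|x-y|^{1-2b}$, and integrating over $|x-y|\le 1$ produces the finite constant $\nor{h'}{L^\infty}^2/(1-b)$, using the hypothesis $b<1$ to guarantee integrability at the origin. For the far-field contribution I would use the crude bound $|h(x)-h(y)|\le 2\nor{h}{L^\infty}$, so the integrand is dominated by $4\nor{h}{L^\infty}^2|x-y|^{-1-2b}$, whose integral over $|x-y|>1$ equals $4\nor{h}{L^\infty}^2/b$, with convergence at infinity guaranteed by $b>0$.

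Adding the two bounds gives $(\mathcal{D}^b h(x))^2\leq C_b(\nor{h'}{L^\infty}^2+\nor{h}{L^\infty}^2)$ with $C_b$ depending only on $b$, and taking square roots yields \eqref{dosa}. Note that the bound is uniform in $x\in\mathbb{R}$, which reflects the translation-invariant nature of the estimate.

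There is no substantial obstacle; the only subtlety to highlight is the matching role of the two hypotheses on $b$: the upper bound $b<1$ is exactly what ensures integrability of the Lipschitz-type estimate near the singularity, while the lower bound $b>0$ is exactly what makes the boundedness-type estimate integrable at infinity. Losing either endpoint destroys the estimate, consistent with the range $b\in(0,1)$ in which $\mathcal{D}^b$ is a genuine fractional derivative via Theorem \ref{derivaStein}.
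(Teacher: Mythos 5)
Your proof is correct and follows essentially the same route as the paper: split the integral defining $\mathcal{D}^b h(x)$ at $|x-y|=1$, use the Mean Value Theorem bound $|h(x)-h(y)|\leq \nor{h'}{L^{\infty}}|x-y|$ on the near-diagonal piece (integrable since $b<1$) and the crude bound $|h(x)-h(y)|\leq 2\nor{h}{L^{\infty}}$ on the far piece (integrable since $b>0$). Your remark on the complementary roles of the two endpoint conditions on $b$ is accurate and matches the paper's implicit reasoning.
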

\begin{proof} Given $b\in (0,1)$, we know that
\begin{equation*}
\bigl(\mathcal{D}^bh(x)\bigr)^2=\int_{|x-y|\leq 1}\dfrac{|h(x)-h(y)|^2}{|x-y|^{1+2b}}\,dy + \int_{|x-y|\geq 1}\dfrac{|h(x)-h(y)|^2}{|x-y|^{1+2b}}\,dy.
\end{equation*}
By the Mean Value Theorem $|h(x)-h(y)|\leq \nor{h'}{L^{\infty}}|x-y|$, we obtain that
\begin{align*}
\bigl(\mathcal{D}^bh(x)\bigr)^2&\leq \int_{|x-y|\leq 1}\dfrac{\nora{h'}{L^{\infty}}{2}}{|x-y|^{2b-1}}\,dy + \int_{|x-y|\geq 1}\dfrac{\nora{h}{L^{\infty}}{2}}{|x-y|^{1+2b}}\,dy \\
&\leq c_b\bigl(\nora{h'}{L^{\infty}}{2}+\nora{h}{L^{\infty}}{2}\bigr)
\end{align*}
since $2b-1<1$ and $b>0$. The last inequality implies (\ref{dosa}).
\end{proof}
\begin{corol} Let $b\in (0,1)$. For any $0<t\leq 1$, $0<\mu \leq 1$ and $\lambda \geq 1$, it holds that
\begin{align}
\mathcal{D}^b\Bigl( e^{\mu t(|\xi|-|\xi|^3)}\Bigr)&\leq C_b \bigl(e^{\mu t}+(\mu t)^{1/3}\bigr)\leq C_b, \label{deruno} \\
\mathcal{D}^b\Bigl( e^{\mu t(|\xi|-|\xi|^3)}|\xi|^{\lambda}\Bigr)&\leq C_b \bigl(e^{\mu t}+(\mu t)^{-\lambda/3}\bigr)\leq C_b(\mu t)^{-\lambda/3}. \label{derdos}
\end{align}
\end{corol}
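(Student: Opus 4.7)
The plan is to reduce both estimates to a direct application of the preceding lemma, namely the pointwise bound $\mathcal{D}^bh(x)\leq C_b(\nor{h}{L^\infty}+\nor{h'}{L^\infty})$ in \eqref{dosa}, combined with the $L^\infty$ estimate \eqref{unoa} for $|\xi|^\lambda e^{\mu t(|\xi|-|\xi|^3)}$. Since \eqref{dosa} is pointwise in $x$, the conclusions hold as pointwise estimates, consistent with the way \eqref{deruno} and \eqref{derdos} are stated.

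For \eqref{deruno}, I set $h(\xi)=e^{\mu t(|\xi|-|\xi|^3)}$. Since $|\xi|-|\xi|^3\leq 1$, we have $\nor{h}{L^\infty}\leq e^{\mu t}$. The derivative, defined a.e., is
\begin{equation*}
h'(\xi)=\mu t\,\text{sgn}(\xi)(1-3\xi^2)\,e^{\mu t(|\xi|-|\xi|^3)},
\end{equation*}
so $|h'(\xi)|\leq \mu t\,e^{\mu t(|\xi|-|\xi|^3)}+3\mu t\,\xi^2 e^{\mu t(|\xi|-|\xi|^3)}$. Applying \eqref{unoa} with $\lambda=0$ and $\lambda=2$ and absorbing the prefactor $\mu t$ using $0<\mu t\leq 1$ yields $\nor{h'}{L^\infty}\leq C(e^{\mu t}+(\mu t)^{1/3})$. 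Summing the two contributions gives the first inequality in \eqref{deruno}, and the second follows from $e^{\mu t}\leq e$ and $(\mu t)^{1/3}\leq 1$.

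For \eqref{derdos}, I set $h(\xi)=e^{\mu t(|\xi|-|\xi|^3)}|\xi|^\lambda$. The bound \eqref{unoa} immediately gives $\nor{h}{L^\infty}\leq C_\lambda(e^{\mu t}+(\mu t)^{-\lambda/3})$. Differentiating produces three types of terms, respectively bounded by $\mu t\,|\xi|^\lambda$, $\mu t\,|\xi|^{\lambda+2}$, and $\lambda\,|\xi|^{\lambda-1}$ multiplied by $e^{\mu t(|\xi|-|\xi|^3)}$ (here $\lambda\geq 1$ ensures the last is integrable at the origin). Invoking \eqref{unoa} on each exponent and using the elementary reduction $(\mu t)^{1-\alpha/3}\leq (\mu t)^{-\alpha/3}$ (valid because $\mu t\leq 1$) allows one to control $\nor{h'}{L^\infty}$ by $C_\lambda(e^{\mu t}+(\mu t)^{-\lambda/3})$. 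The final inequality is again automatic since $e^{\mu t}\leq e\leq e\,(\mu t)^{-\lambda/3}$.

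The only subtlety I foresee is that $h$ fails to be smooth at $\xi=0$, so \eqref{dosa} must be read with $h'$ understood as an a.e.-defined bounded function; this is harmless because the Mean Value Theorem argument in the proof of \eqref{dosa} goes through separately on $\{\xi>0\}$ and $\{\xi<0\}$, and the integration in the definition of $\mathcal{D}^b$ is insensitive to a single point. Everything else is routine bookkeeping of powers of $\mu t$ and $|\xi|$.
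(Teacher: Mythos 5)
Your proposal is correct and is exactly the argument the paper intends: the corollary is stated without proof precisely because it follows by applying the pointwise bound \eqref{dosa} to $h(\xi)=e^{\mu t(|\xi|-|\xi|^3)}$ (resp.\ $h(\xi)=e^{\mu t(|\xi|-|\xi|^3)}|\xi|^{\lambda}$) and then controlling $\nor{h}{L^{\infty}}$ and $\nor{h'}{L^{\infty}}$ via \eqref{unoa}, with the bookkeeping of powers of $\mu t$ that you carry out. Your remark that $h'$ need only be an a.e.-defined bounded (Lipschitz) derivative at $\xi=0$, and that $\lambda\geq 1$ is what keeps the term $\lambda|\xi|^{\lambda-1}$ admissible in \eqref{unoa}, correctly identifies the only delicate points.
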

\begin{lema}\label{clave1}
Let $h\in H^b(\mathbb{R})\cap L^2(|x|^{2b})$ where $0<b<1$.  Then, for any $0< t \leq 1$, $0<\mu \leq 1$ and $\lambda \geq 1$,
\begin{align}
\nor{\mathcal{D}^b\bigl(e^{it\xi |\xi|+\mu t(|\xi|-|\xi|^3)}\widehat{h}(\xi)\bigr)}{}
&\leq C_b \Bigl(\nor{\widehat{h}(\xi)}{}+\nor{\,|\xi|^b\widehat{h}(\xi)}{}+\nor{\mathcal{D}^b\bigl(\widehat{h}(\xi)\bigr)}{}\Bigr), \label{dertres} \\
\nor{\mathcal{D}^b\bigl(e^{it\xi |\xi|+\mu t(|\xi|-|\xi|^3)}|\xi|^{\lambda} \widehat{h}(\xi)\bigr)}{}&\leq C_b t^{-\lambda/3}\Bigl(\nor{\widehat{h}(\xi)}{}+\nor{\,|\xi|^b\widehat{h}(\xi)}{}+\nor{\mathcal{D}^b\bigl(\widehat{h}(\xi)\bigr)}{}\Bigr) . \label{dercuatro}
\end{align}
\begin{proof}
To prove (\ref{dertres}) and (\ref{dercuatro}) we use Leibniz's Rule for $\mathcal{D}^b$ (\ref{productostein}) and the results in (\ref{cotabo}), (\ref{unoa}), (\ref{deruno}) and (\ref{derdos}). 
\end{proof}
\end{lema}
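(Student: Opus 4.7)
The plan is to apply the Leibniz-type rule \eqref{productostein} for $\mathcal{D}^b$ twice, splitting the integrand into factors whose $L^\infty$ size or whose $\mathcal{D}^b$ size has already been controlled in \eqref{cotabo}, \eqref{unoa}, \eqref{deruno}, \eqref{derdos}. The factorization I have in mind is
\[
e^{it\xi|\xi|+\mu t(|\xi|-|\xi|^3)}|\xi|^{\lambda}\widehat{h}(\xi)=e^{it\xi|\xi|}\cdot\bigl(e^{\mu t(|\xi|-|\xi|^3)}|\xi|^{\lambda}\bigr)\cdot\widehat{h}(\xi),
\]
which separates the oscillatory piece from the dissipative piece and keeps the weight $|\xi|^{\lambda}$ grouped with the damping exponential where the bound \eqref{derdos} is tailored to act.

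First I would prove \eqref{dertres} (the case $\lambda=0$) as a warm-up. Applying \eqref{productostein} once to peel off $e^{it\xi|\xi|}$ produces two terms; in the first I use $|e^{\mu t(|\xi|-|\xi|^3)}|\le e^{\mu t}\le e$ together with \eqref{cotabo} to estimate
\[
\bigl\|\widehat{h}(\xi)e^{\mu t(|\xi|-|\xi|^3)}\mathcal{D}^b(e^{it\xi|\xi|})\bigr\|\lesssim t^{b/2}\|\widehat{h}\|+t^b\bigl\||\xi|^b\widehat{h}\bigr\|,
\]
which is harmless since $0<t\le 1$. The second term is $\|\mathcal{D}^b(e^{\mu t(|\xi|-|\xi|^3)}\widehat{h})\|$ since $|e^{it\xi|\xi|}|=1$; applying \eqref{productostein} again and using \eqref{deruno} to bound $\mathcal{D}^b(e^{\mu t(|\xi|-|\xi|^3)})\le C_b$ and $e^{\mu t(|\xi|-|\xi|^3)}$ uniformly by a constant delivers the remaining pieces $\|\widehat{h}\|$ and $\|\mathcal{D}^b\widehat{h}\|$.

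For \eqref{dercuatro} the strategy is identical, but the weight $|\xi|^{\lambda}$ needs a home. I would keep it attached to $e^{\mu t(|\xi|-|\xi|^3)}$, so that after a first application of \eqref{productostein} the term with $\mathcal{D}^b(e^{it\xi|\xi|})$ is controlled by \eqref{cotabo} combined with the pointwise bound \eqref{unoa} for $|\xi|^{\lambda}e^{\mu t(|\xi|-|\xi|^3)}$, producing the factor $t^{-\lambda/3}$ after absorbing $t^{b/2}, t^b\le 1$. For the remaining term one more application of \eqref{productostein} splits $e^{\mu t(|\xi|-|\xi|^3)}|\xi|^{\lambda}$ from $\widehat{h}$; then \eqref{derdos} handles $\mathcal{D}^b$ of the weighted exponential, while \eqref{unoa} handles its sup norm against $\mathcal{D}^b\widehat{h}$. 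All three pieces emerge with the common prefactor $t^{-\lambda/3}$.

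I do not anticipate a serious obstacle; the only points requiring a little care are (i) verifying that $\mu t(|\xi|-|\xi|^3)$ is bounded above uniformly in $\xi$ so that the factor $e^{\mu t(|\xi|-|\xi|^3)}$ can be pulled out of $L^\infty$ without gaining a $\xi$-dependence, and (ii) ensuring that the hypothesis $h\in H^b\cap L^2(|x|^{2b})$ is precisely what makes the three right-hand quantities $\|\widehat{h}\|$, $\||\xi|^b\widehat{h}\|$ and $\|\mathcal{D}^b\widehat{h}\|$ finite, via the Stein characterization \eqref{normasequivalentes} on the Fourier side.
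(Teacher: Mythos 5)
Your proposal is correct and follows essentially the same route as the paper, whose proof is just a one-line citation of the very ingredients you use: two applications of the Leibniz-type rule \eqref{productostein} to split off $e^{it\xi|\xi|}$ (handled by \eqref{cotabo} and $|e^{it\xi|\xi|}|=1$) and then $e^{\mu t(|\xi|-|\xi|^3)}|\xi|^{\lambda}$ (handled by \eqref{unoa}, \eqref{deruno}, \eqref{derdos}). Your two points of care are also right: $|\xi|-|\xi|^3\leq 2/(3\sqrt{3})$ gives the uniform bound on the dissipative factor, and Plancherel plus \eqref{normasequivalentes} identifies the three right-hand norms with the hypothesis $h\in H^b(\mathbb{R})\cap L^2(|x|^{2b})$.
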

\begin{rem}
\begin{description}
\item[]
\item[i.)] (\ref{derdos}) and (\ref{dercuatro}) still hold if $|\xi|^{\lambda}$, for $\lambda \in \mathbb{Z}^{+}$, is substituted by $|\xi|^{\alpha_1}\xi^{\alpha_2}$, where $\lambda=\alpha_1+\alpha_2$, and $\alpha_1$, $\alpha_2 \in \mathbb{Z}^+$.
\item[ii.)] If $0<\lambda <1$, (\ref{derdos}) is not true because the derivative of $|\xi|^{\lambda}$ is not bounded near to zero.
\end{description}
\end{rem}
\begin{lema} Given $\phi \in H^{1+\theta}(\mathbb{R})\cap L^2(|x|^{2(1+\theta)})$ where $\theta \geq 0$ we have that
\begin{align}
\nor{\langle \xi \rangle^{\theta}\partial_{\xi}\widehat{\phi}}{}&\leq C_{\theta} \bigl(\nor{\phi}{1+\theta}+\nor{|x|^{1+\theta}\phi}{}\bigr) \quad \text{and} \label{simplifica} \\
\nor{\langle x \rangle^{\theta}\partial_x\phi}{}&\leq C_{\theta} \bigl(\nor{\phi}{1+\theta}+\nor{|x|^{1+\theta}\phi}{}\bigr). \label{simplificaotra}
\end{align}
\end{lema}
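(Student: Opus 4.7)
My plan is to reduce both inequalities to Sobolev/weighted estimates via Plancherel's theorem and then apply a standard interpolation inequality of Fonseca--Ponce/Nahas--Ponce type.

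For \eqref{simplifica}, the identity $\widehat{xf}(\xi)=i\,\partial_\xi\widehat{f}(\xi)$ combined with Plancherel gives
\[
\nor{\langle\xi\rangle^\theta\partial_\xi\widehat{\phi}}{}=\nor{\langle\xi\rangle^\theta\,\widehat{x\phi}}{}=\nor{x\phi}{\theta}.
\]
So it suffices to bound $\nor{x\phi}{\theta}$ by the right-hand side of \eqref{simplifica}. Writing $J=\langle D\rangle$, I decompose $J^{\theta}(x\phi)=xJ^{\theta}\phi+[J^{\theta},x]\phi$. The commutator has symbol comparable to $\xi\langle\xi\rangle^{\theta-2}$, which is of order $\theta-1$, so the standard pseudodifferential calculus gives $\nor{[J^{\theta},x]\phi}{}\lesssim_{\theta}\nor{\phi}{\theta-1}\le\nor{\phi}{1+\theta}$. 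The main term is controlled by the interpolation inequality $\nor{|x|^{\lambda b}J^{(1-\lambda)a}f}{}\lesssim\nor{|x|^{b}f}{}^{\lambda}\nor{J^{a}f}{}^{1-\lambda}$ of \cite{FP, NP}, applied with $a=b=1+\theta$ and $\lambda=1/(1+\theta)$; after Young's inequality this yields $\nor{xJ^{\theta}\phi}{}\lesssim\nor{\phi}{1+\theta}+\nor{|x|^{1+\theta}\phi}{}$, establishing \eqref{simplifica}.

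For \eqref{simplificaotra} I will deduce the estimate from \eqref{simplifica} by a Fourier-inversion argument. Applying \eqref{simplifica} to $\psi=\widehat{\phi}$ and using $\widehat{\widehat{\phi}}(\xi)=\phi(-\xi)$ (up to the normalization constant) together with the change of variables $\xi\mapsto-\xi$, the left-hand side becomes $\nor{\langle x\rangle^{\theta}\partial_x\phi}{}$, while the right-hand side reads $\nor{\widehat{\phi}}{1+\theta}+\nor{|\xi|^{1+\theta}\widehat{\phi}}{}$. Since $\nor{\widehat{\phi}}{1+\theta}=\nor{\langle x\rangle^{1+\theta}\phi}{}\lesssim\nor{\phi}{}+\nor{|x|^{1+\theta}\phi}{}$ (splitting the weight across $|x|\le 1$ and $|x|>1$) and $\nor{|\xi|^{1+\theta}\widehat{\phi}}{}\le\nor{\langle\xi\rangle^{1+\theta}\widehat{\phi}}{}=\nor{\phi}{1+\theta}$, we recover \eqref{simplificaotra}.

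The principal obstacle is the interpolation inequality invoked in the first step; this is, however, a well-established tool in weighted Sobolev analysis, whose proof proceeds by three-lines (complex) interpolation between trivial endpoints.
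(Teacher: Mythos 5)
Your proof is correct and, for (\ref{simplifica}), is essentially the paper's own argument: your commutator $[J^{\theta},x]$ is exactly the Fourier-side product-rule term $\partial_{\xi}(\langle\xi\rangle^{\theta})\widehat{\phi}$ (an exact multiplier of order $\theta-1$, so no pseudodifferential calculus is even needed), and the main term is handled by the same Fonseca--Ponce/Nahas--Ponce interpolation inequality with the same exponents $a=b=1+\theta$, $\lambda=1/(1+\theta)$, followed by Young's inequality. For (\ref{simplificaotra}) the paper only says the proof is ``similar''; your derivation of it from (\ref{simplifica}) by applying that estimate to $\psi=\widehat{\phi}$ and using Fourier inversion is a legitimate and arguably cleaner way to make that precise, since it avoids repeating the interpolation step on the physical side.
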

\begin{proof}
Applying the product rule we know that
\begin{align}
\nor{\langle \xi \rangle^{\theta}\partial_{\xi}\widehat{\phi}}{}&\leq \nor{\partial_{\xi}\bigl(\langle \xi \rangle^{\theta}\bigr)\widehat{\phi}}{} + \nor{\partial_{\xi}\bigl(\langle \xi \rangle^{\theta}\widehat{\phi}\bigr)}{} \notag \\
&\leq c_{\theta}\Bigl(\nor{\langle \xi \rangle^{\theta}\widehat{\phi}}{}+\nor{\langle x \rangle J_x^{\theta}\phi}{}\Bigr) \label{simplificauno}
\end{align}
and since 
\begin{align}
\nor{\langle x\rangle J_x^{\theta}\phi}{}&=\nor{\langle x\rangle^{(1+\theta)\frac{1}{1+\theta}} J_x^{(1+\theta)\frac{\theta}{1+\theta}}\phi}{}\leq C\nora{\langle x\rangle^{1+\theta}\phi}{}{\frac{1}{1+\theta}}\nora{J_x^{1+\theta}\phi}{}{\frac{\theta}{1+\theta}}\notag \\
&\leq C\bigl(\nor{\langle x\rangle^{1+\theta}\phi}{}+\nor{\phi}{1+\theta}\bigr), \label{simplificados}
\end{align} 
then, using (\ref{simplificados}) in (\ref{simplificauno}), we obtain the inequality (\ref{simplifica}). The proof of (\ref{simplificaotra}) is similar.
\end{proof}
As a further direct consequence of Theorem \ref{derivaStein} we will use the following result in the proof of Theorem \ref{contunica1}, deduced in \cite{FP}.
\begin{prop}\label{I1}
Let $p\in (1, \infty)$. If $f\in L^p(\mathbb{R})$ such that there exists $x_0 \in \mathbb{R}$ for which $f(x_0+)$, $f(x_0-)$ are defined and $f(x_0+)\ne f(x_0-)$, then for any $\delta > 0$, $\mathcal{D}^{1/p}f \notin L_{loc}^p(B(x_0,\delta))$ and consequently $f\notin L_{1/p}^p(\mathbb{R})$.
\end{prop}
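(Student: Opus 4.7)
The plan is to show that a jump discontinuity at $x_0$ forces the square function $\mathcal{D}^{1/p}f$ to blow up like $|x-x_0|^{-1/p}$ on one side of $x_0$, which is exactly the borderline of $L^p$-integrability and therefore produces a logarithmic divergence in the $L^p$ norm. Translating and assuming $x_0=0$, write $A=f(0+)$ and $B=f(0-)$ with $A\neq B$. By the definition of one-sided limits, I would choose $\epsilon>0$ so small that $|f(x)-A|<|A-B|/4$ for $x\in(0,\epsilon)$ and $|f(y)-B|<|A-B|/4$ for $y\in(-\epsilon,0)$, so that $|f(x)-f(y)|\geq |A-B|/2$ on the rectangle $(0,\epsilon)\times(-\epsilon,0)$.

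For every $x\in(0,\epsilon)$ I would then restrict the defining integral (\ref{derivadastein}) to $y\in(-\epsilon,0)$ and change variables $u=x-y$ to obtain
$$\bigl(\mathcal{D}^{1/p}f(x)\bigr)^2 \;\geq\; \frac{|A-B|^2}{4}\int_{x}^{x+\epsilon}\frac{du}{u^{1+2/p}} \;=\; \frac{p\,|A-B|^2}{8}\bigl(x^{-2/p}-(x+\epsilon)^{-2/p}\bigr).$$
For $x\in(0,\epsilon/2)$ the right-hand side is bounded below by a positive constant times $x^{-2/p}$, hence $\mathcal{D}^{1/p}f(x)\gtrsim|A-B|\,x^{-1/p}$ on that subinterval. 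Raising to the $p$-th power and integrating yields
$$\int_0^{\epsilon/2}\bigl(\mathcal{D}^{1/p}f(x)\bigr)^p\,dx \;\gtrsim\; |A-B|^p\int_0^{\epsilon/2}\frac{dx}{x} \;=\; +\infty.$$
Since any $B(x_0,\delta)$ contains the subinterval $(x_0,x_0+\min(\epsilon/2,\delta))$, this gives $\mathcal{D}^{1/p}f\notin L_{loc}^p(B(x_0,\delta))$, which is the first assertion.

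The second conclusion follows at once from Theorem \ref{derivaStein}: the exponent $b=1/p$ lies in $(0,1)$ because $p>1$, and the admissibility condition $2/(1+2b)<p$ reduces to $2p/(p+2)<p$, which holds for every $p>0$. Hence if $f$ belonged to $L_{1/p}^p(\mathbb{R})$ we would have $\mathcal{D}^{1/p}f\in L^p(\mathbb{R})$, contradicting the local non-integrability established above. The only delicate point in the plan is the geometric pairing: forcing $x>0$ and $y<0$ is what lets $|f(x)-f(y)|$ inherit the full jump $|A-B|$, and after the change of variables the lower endpoint $u=x$ is precisely what produces the critical $x^{-2/p}$ blow-up rather than a milder integrable singularity.
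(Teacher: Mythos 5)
Your argument is correct: the localization to the rectangle $(0,\epsilon)\times(-\epsilon,0)$, the resulting lower bound $\mathcal{D}^{1/p}f(x)\gtrsim |A-B|\,|x-x_0|^{-1/p}$, and the verification that $b=1/p$ satisfies the hypotheses of Theorem \ref{derivaStein} are all sound. The paper itself only cites \cite{FP} for this proposition, and your proof is essentially the same standard argument given there, so there is nothing further to compare.
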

Also, we will employ the next simple estimate.
\begin{prop}\label{simplestimate}
If $f\in L^2(\mathbb{R})$ and $\phi \in H^1(\mathbb{R})$, then
$$\nor{[D^{1/2}, \phi] \,f}{}\leq c\,\nor{\phi}{1}\,\nor{f}{}.$$
\end{prop}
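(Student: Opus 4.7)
The plan is to pass to Fourier variables, where
\[
\widehat{[D^{1/2},\phi]f}(\xi)=\int_{\mathbb{R}}\bigl(|\xi|^{1/2}-|\eta|^{1/2}\bigr)\,\widehat\phi(\xi-\eta)\,\widehat f(\eta)\,d\eta,
\]
and to estimate the $L^2_\xi$ norm via a Littlewood--Paley decomposition $\phi=\sum_k\phi_k$, $f=\sum_j f_j$ with Fourier supports in the dyadic annuli $\{|\zeta|\sim 2^k\}$ and $\{|\eta|\sim 2^j\}$ respectively. Setting $c_k:=2^k\|\phi_k\|_{L^2}$, one has $\sum_k c_k^2\lesssim\|\phi\|_1^2$, and I would split the commutator into three regimes according to the pair $(k,j)$: the off-diagonal regimes $k\le j-3$ and $k\ge j+3$, together with the diagonal $|k-j|\le 3$. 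A naive application of the subadditivity bound $\bigl||\xi|^{1/2}-|\eta|^{1/2}\bigr|\le|\xi-\eta|^{1/2}$ followed by Young's inequality would require the $L^1$ bound $\int|\zeta|^{1/2}|\widehat\phi(\zeta)|\,d\zeta\lesssim\|\phi\|_1$, which fails; this reflects the borderline nature of the estimate and forces one to exploit the smoothness of the symbol dyadically.

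For $k\le j-3$ (low $\phi$, high $f$), the frequency constraint forces $|\xi|\sim|\eta|\sim 2^j$, and the mean-value bound $\bigl||\xi|^{1/2}-|\eta|^{1/2}\bigr|\lesssim|\xi-\eta|/|\eta|^{1/2}$ combined with Young's inequality and the Cauchy--Schwarz estimate $\|\widehat\phi_k\|_{L^1}\lesssim 2^{-k/2}c_k$ yields $\|[D^{1/2},\phi_k]f_j\|_{L^2}\lesssim 2^{(k-j)/2}c_k\|f_j\|_{L^2}$. Since the output is spectrally localized at scale $2^j$, almost orthogonality in $j$ followed by Fubini sums this to a bound by $\|\phi\|_1\|f\|_{L^2}$. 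The diagonal regime $|k-j|\le 3$ is handled by the crude symbol bound $\bigl||\xi|^{1/2}-|\eta|^{1/2}\bigr|\lesssim 2^{j/2}$ combined with $\|\phi_k\|_{L^\infty}\lesssim 2^{-k/2}c_k$ via Bernstein, producing $\|[D^{1/2},\phi_k]f_j\|_{L^2}\lesssim c_k\|f_j\|_{L^2}$, which closes after a finite-strip summation in $k\sim j$ and Fubini.

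The remaining regime $k\ge j+3$ (high $\phi$, low $f$) is the principal obstacle, since the symbol $|\xi|^{1/2}-|\eta|^{1/2}$ provides no gain when $|\xi-\eta|\gg|\eta|$. I would circumvent this by rearranging the double sum as $\sum_k[D^{1/2},\phi_k]\,S_{k-3}f$, where $S_{k-3}f:=\sum_{j\le k-3}f_j$; the summands are now spectrally localized at scale $2^k$, so almost orthogonality in $k$ applies. Each term is controlled by the triangle inequality $\|[D^{1/2},\phi_k]g\|_{L^2}\le\|D^{1/2}(\phi_k g)\|_{L^2}+\|\phi_k D^{1/2}g\|_{L^2}$; Bernstein's inequalities $\|S_{k-3}f\|_{L^\infty}\lesssim 2^{k/2}\|f\|_{L^2}$ and $\|D^{1/2}S_{k-3}f\|_{L^2}\lesssim 2^{k/2}\|f\|_{L^2}$, together with $\|\phi_k\|_{L^\infty}\lesssim 2^{-k/2}c_k$, produce the bound $c_k\|f\|_{L^2}$ per scale $k$. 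Almost orthogonality in $k$ then sums the squares to $\|\phi\|_1^2\|f\|_{L^2}^2$, completing the proof.
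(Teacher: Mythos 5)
Your argument is correct. Note that the paper itself offers no proof of Proposition \ref{simplestimate}: it is stated as a ``simple estimate'' and used as a black box (it is a standard commutator bound of Calder\'on type, of the kind invoked in \cite{FP}), so there is no internal proof to compare against; what you have written is a legitimate self-contained derivation. Your paraproduct scheme is the standard one and all three regimes close: in the low--high regime the algebraic identity $\bigl||\xi|^{1/2}-|\eta|^{1/2}\bigr|=\bigl||\xi|-|\eta|\bigr|/(|\xi|^{1/2}+|\eta|^{1/2})\le|\xi-\eta|/|\eta|^{1/2}$ gives the claimed gain $2^{(k-j)/2}$ without even needing the mean value theorem, and the outputs are localized in annuli of scale $2^j$ with bounded overlap; in the high--low regime the resummation against $S_{k-3}f$ restores spectral localization at scale $2^k$ and the Bernstein bounds you quote give exactly $c_k\nor{f}{}$ per scale. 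Two small points deserve one more line each in a written version: (i) the diagonal outputs $[D^{1/2},\phi_k]f_j$ with $|k-j|\le 3$ are \emph{not} supported in annuli (they spread over all $|\xi|\lesssim 2^{j+2}$), so ``finite-strip summation'' should be spelled out as the triangle inequality in $(k,j)$ followed by Cauchy--Schwarz, $\sum_j c_j\nor{f_j}{}\le(\sum_j c_j^2)^{1/2}(\sum_j\nora{f_j}{}{2})^{1/2}$, rather than any orthogonality; (ii) with the homogeneous weights $c_k=2^k\nor{\phi_k}{L^2}$ one only gets $\sum_k c_k^2\lesssim\nora{\phi}{\dot H^1}{2}$, which suffices here but means the low-frequency block of $\phi$ (where one should instead use $\nor{\phi_k}{L^2}$ itself, controlled by $\nor{\phi}{L^2}$) ought to be mentioned explicitly if you use an inhomogeneous decomposition. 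Neither point affects the validity of the proof.
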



\setcounter{equation}{0}
\section{Theory in $H^s(\mathbb{R})$}
The purpose in this section is to prove LWP and GWP of the IVP (\ref{npbo}) in Sobolev spaces $H^s(\mathbb{R})$ for $s> -3/2$. Our strategy is to use a contraction argument on the integral equation (\ref{intequation}) associated to (\ref{npbo}). We have introduced in (\ref{spacexts}) the $X_T^s$ spaces, for $0\leq T\leq 1$ and $s < 0$, in order to obtain linear and bilinear estimates. First, we recall the following lemma, in \cite{Amin}, which is useful in establishing smoothness properties for the semigroup $S$ of (\ref{npbo}).
\begin{lema}\label{LemmaLB1}
Let $\lambda>0$ and $0<t\leq 9 \lambda$ be given. Then
\begin{equation}\label{LBequa1}
\xi^{2\lambda} e^{t \left(|\xi|-|\xi|^3 \right)}\leq f_{\lambda}(t):=\rho^{2\lambda}\,e^{t(\rho -\rho^3)},
\end{equation}
where
$$\rho=\dfrac{\left(9\lambda +\sqrt{81\lambda^2-t^2}\,\right)^{1/3}}{3}\,t^{-1/3}+\dfrac{t^{1/3}}{3\left(9\lambda +\sqrt{81\lambda^2-t^2}\,\right)^{1/3}}.$$
Moreover, if $\lambda=0$, then (\ref{LBequa1}) holds for $f_0(t)=\exp \left( \frac{2t}{3\sqrt{3}}\right)$.
\end{lema}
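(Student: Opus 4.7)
The plan is to reduce the left-hand side to a one-variable maximization on $[0,\infty)$, compute the unique positive critical point explicitly via Cardano's formula, and then read off the bound. Since $\xi \mapsto \xi^{2\lambda} e^{t(|\xi|-|\xi|^3)}$ is even, it suffices to maximize $g(\xi) := \xi^{2\lambda}\, e^{t(\xi - \xi^3)}$ on $\xi \ge 0$.

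Taking logarithmic derivatives, $(\log g)'(\xi) = 2\lambda/\xi + t - 3t\xi^2$, so critical points in $(0,\infty)$ satisfy the depressed cubic
\begin{equation*}
3t\,\xi^3 - t\,\xi - 2\lambda = 0, \qquad \text{equivalently} \qquad \xi^3 - \tfrac{1}{3}\xi - \tfrac{2\lambda}{3t} = 0.
\end{equation*}
A sign analysis of the cubic on $(0,\infty)$, using that it has a unique interior minimum at $\xi = 1/\sqrt{3}$ where it equals $-2\lambda < 0$, shows there is exactly one positive root $\rho$, and necessarily $\rho > 1/\sqrt{3}$. Since $g(0^+) = 0$ and $g(\xi) \to 0$ as $\xi \to \infty$, this $\rho$ is the global maximizer, so
$\max_{\xi \ge 0} g(\xi) = \rho^{2\lambda} e^{t(\rho-\rho^3)} = f_\lambda(t)$, which is the claimed bound.

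The real content is identifying $\rho$ with the explicit expression in the statement, and this is where I expect to concentrate the effort. I would apply Cardano to the cubic above with $p = -1/3$ and $q = -2\lambda/(3t)$ and compute
\begin{equation*}
\left(\tfrac{q}{2}\right)^2 + \left(\tfrac{p}{3}\right)^3 = \frac{\lambda^2}{9 t^2} - \frac{1}{729} = \frac{81\lambda^2 - t^2}{729\,t^2},
\end{equation*}
which is nonnegative precisely under the hypothesis $t \leq 9\lambda$; this is why this restriction enters. Cardano's real form then gives $\rho$ as a sum of two cube roots of $(9\lambda \pm \sqrt{81\lambda^2-t^2})/(27t)$. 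Using the conjugate identity $(9\lambda + \sqrt{81\lambda^2-t^2})(9\lambda - \sqrt{81\lambda^2-t^2}) = t^2$ to rewrite the second cube root as $t^{2/3}/(9\lambda + \sqrt{81\lambda^2 - t^2})^{1/3}$ produces exactly the formula for $\rho$ displayed in the lemma.

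The case $\lambda = 0$ degenerates and should be handled by hand: the prefactor $\xi^{2\lambda}$ disappears, so one maximizes $t(\xi - \xi^3)$ over $\xi \ge 0$, finding $\xi = 1/\sqrt{3}$ with maximum value $2t/(3\sqrt{3})$, hence $f_0(t) = \exp(2t/(3\sqrt{3}))$. The only genuine obstacle is the algebraic bookkeeping with the radicals in Cardano's formula and checking that the resulting $\rho$ matches the written expression; the calculus and uniqueness steps are routine.
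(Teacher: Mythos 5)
Your proposal is correct and is the natural argument for this lemma; the paper itself gives no proof, deferring to \cite{Amin}, and the explicit Cardano-type form of $\rho$ in the statement makes clear that the intended proof is exactly your calculus-plus-Cardano computation (the discriminant identity $(q/2)^2+(p/3)^3=(81\lambda^2-t^2)/(729\,t^2)$ and the conjugate trick to rewrite the second cube root both check out). One small slip in your sign analysis: the cubic $3t\xi^3-t\xi-2\lambda$ has its interior minimum at $\xi=1/3$ (where it equals $-2t/9-2\lambda$), not at $\xi=1/\sqrt{3}$; the point $1/\sqrt{3}$ is merely a convenient location where the cubic equals $-2\lambda<0$, which still yields the unique positive root $\rho>1/\sqrt{3}$, so the conclusion is unaffected.
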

\begin{proof}See \cite{Amin}.
\end{proof}
Now, we are going to estimate the linear part of (\ref{intequation}) in $X_T^s$.
\begin{prop}\label{PropLB1}
Let $0<T\leq T^*= \min \{1, 9|s|/2\}$, $s<0$ and $\phi\in H^s(\mathbb{R})$, then it follows that
\begin{equation}\label{LB1a}
\sup_{t\in[0,T]}\left\|S(t)\phi\right\|_{s}\leq e^{\frac{2}{3\sqrt{3}}T} \left\|\phi\right\|_{s},
\end{equation}
and
\begin{equation}\label{LB1b}
\sup_{t\in[0,T]}t^{\frac{|s|}{3}}\left\|S(t)\phi\right\|\lesssim_{s}g_{s}(T)\left\|\phi\right\|_{s},
\end{equation}
where 
$$g_{s}(t)=e^{\frac{2 t}{3\sqrt{3}}}+ t^{\frac{|s|}{3}}\,f_{|s|/2}(t),$$
is a continuous nondecreasing function on $[0,T^*]$ and $f$ is defined as in Lemma (\ref{LemmaLB1}).
\end{prop}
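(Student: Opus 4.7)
Both estimates reduce, via Plancherel, to pointwise $L^\infty$ bounds on the Fourier multiplier of the semigroup, for which $|F_\mu(t,\xi)| = e^{\mu t(|\xi| - |\xi|^3)}$, followed by an application of Lemma \ref{LemmaLB1} with a suitable choice of $\lambda$. The standing restriction $T \leq T^* = \min\{1, 9|s|/2\}$ (together with $\mu\le 1$, absorbed into constants) is exactly what makes the hypotheses of Lemma \ref{LemmaLB1} hold for all $\tau = \mu t \in [0,T]$.

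For (\ref{LB1a}), Plancherel immediately gives
$$\|S(t)\phi\|_s = \|\langle\xi\rangle^s F_\mu(t,\xi)\widehat\phi(\xi)\|_{L^2} \leq \|F_\mu(t,\cdot)\|_{L^\infty}\|\phi\|_s.$$
The elementary calculus fact $\max_{\xi\in\mathbb{R}}(|\xi|-|\xi|^3) = 2/(3\sqrt{3})$, which is Lemma \ref{LemmaLB1} with $\lambda = 0$ applied to time $\mu t$, then yields $\|F_\mu(t,\cdot)\|_{L^\infty} \leq e^{2\mu t/(3\sqrt{3})} \leq e^{2T/(3\sqrt{3})}$, which is (\ref{LB1a}).

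For (\ref{LB1b}), the same argument applied to the unweighted $L^2$ norm produces
$$\|S(t)\phi\| \leq \|\langle\xi\rangle^{|s|} F_\mu(t,\xi)\|_{L^\infty}\|\phi\|_s.$$
Splitting $\langle\xi\rangle^{|s|} \lesssim_s 1 + |\xi|^{|s|}$, the contribution of $1$ is controlled as in (\ref{LB1a}), while the contribution of $|\xi|^{|s|}$ is exactly $\||\xi|^{|s|} F_\mu(t,\cdot)\|_{L^\infty} \leq f_{|s|/2}(\mu t)$ by Lemma \ref{LemmaLB1} with $\lambda = |s|/2$ and $\tau = \mu t$ (this is where the bound $\mu t \leq 9|s|/2$ is used). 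Combining, multiplying by $t^{|s|/3}$, using $t^{|s|/3} \leq T^{|s|/3} \leq 1$ to absorb the prefactor on the first piece, and taking the supremum over $t \in [0,T]$ gives $C_s\,g_s(T)\|\phi\|_s$, provided $g_s$ is nondecreasing so that the supremum equals the value at $T$.

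The only nontrivial point is verifying that $g_s(t)=e^{2t/(3\sqrt 3)}+t^{|s|/3}f_{|s|/2}(t)$ is continuous and nondecreasing on $[0,T^*]$. Continuity is clear: the explicit formula for $\rho(t)$ from Lemma \ref{LemmaLB1} has a $t^{-1/3}$ singularity at $0$, and the prefactor $t^{|s|/3}$ (via $\rho^{|s|}\sim t^{-|s|/3}$) exactly cancels it. Monotonicity can be seen from the fact that $\rho(t)$ is the critical point of $\xi\mapsto \xi^{|s|}e^{t(\xi-\xi^3)}$, so $f_{|s|/2}(t)$ is the envelope of maxima of this one-parameter family — it increases in $t$ — and the additional factor $t^{|s|/3}$ only reinforces this trend. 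I expect this envelope/monotonicity step to be the main technical nuisance; everything else is a direct application of Lemma \ref{LemmaLB1}.
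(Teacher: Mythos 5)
Your reduction of both estimates to $L^\infty$ bounds on the multiplier, and the splitting $\langle\xi\rangle^{|s|}\lesssim_s 1+|\xi|^{|s|}$ followed by Lemma \ref{LemmaLB1} with $\lambda=0$ and $\lambda=|s|/2$ at time $\mu t$, is exactly the intended argument (the paper simply defers to Proposition 1 of \cite{Amin} for this). However, the step you yourself flag as the main nuisance --- monotonicity of $g_s$ --- is justified incorrectly, and the error is not cosmetic. The claim that ``$f_{|s|/2}(t)$ is the envelope of maxima of this one-parameter family --- it increases in $t$'' is false. An envelope of maxima $\sup_\xi h(t,\xi)$ is nondecreasing in $t$ only when each member of the family is; here $\xi\mapsto \xi^{2\lambda}e^{t(\xi-\xi^3)}$ is \emph{decreasing} in $t$ for every $\xi>1$, and since the maximizer $\rho(t)\sim (2\lambda/3t)^{1/3}\to\infty$ as $t\to 0^+$, one has $f_\lambda(t)\sim C_\lambda\, t^{-2\lambda/3}\to\infty$: the function $f_{|s|/2}$ blows up at the origin and is decreasing for small $t$. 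Consequently the remark that the prefactor $t^{|s|/3}$ ``only reinforces this trend'' is backwards --- that prefactor is precisely what rescues both continuity at $0$ and monotonicity, and without a correct monotonicity proof the final step $\sup_{t\in[0,T]}t^{|s|/3}f_{|s|/2}(t)\le T^{|s|/3}f_{|s|/2}(T)$ is unjustified.

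The fix is short. With $\lambda=|s|/2$, change variables $w=t^{1/3}\xi$ inside the supremum defining $f_\lambda$:
\begin{equation*}
t^{2\lambda/3}f_{\lambda}(t)=\sup_{\xi\geq 0}\bigl(t^{1/3}\xi\bigr)^{2\lambda}e^{t\xi-t\xi^{3}}=\sup_{w\geq 0} w^{2\lambda}\,e^{w\,t^{2/3}-w^{3}}.
\end{equation*}
For each fixed $w\geq 0$ the quantity $w^{2\lambda}e^{w t^{2/3}-w^{3}}$ is continuous and nondecreasing in $t\geq 0$, so the supremum is nondecreasing on $[0,T^*]$, equals $(2\lambda/3)^{2\lambda/3}e^{-2\lambda/3}$ at $t=0$, and is continuous there (and on $(0,T^*]$ by the explicit formula). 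Adding the manifestly increasing term $e^{2t/(3\sqrt 3)}$ gives the stated properties of $g_s$. One further small point: with general $\mu>0$ your bounds naturally produce $e^{2\mu t/(3\sqrt 3)}$ and $f_{|s|/2}(\mu t)$, and passing from $t^{|s|/3}f_{|s|/2}(\mu t)$ to $C_{\mu,s}\,t^{|s|/3}f_{|s|/2}(t)$ again uses monotonicity of the \emph{product} $\tau^{|s|/3}f_{|s|/2}(\tau)$ (not of $f$ itself), via $t^{|s|/3}f_{|s|/2}(\mu t)=\mu^{-|s|/3}(\mu t)^{|s|/3}f_{|s|/2}(\mu t)$; so the corrected monotonicity statement is used twice.
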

\begin{proof}
It is the same proof of Proposition 1 in \cite{Amin}.
\end{proof}

Next, we establish the crucial bilinear estimates. 
\begin{prop}\label{PropLB2} Let $0\leq t\leq T\leq T^*$ and $-\frac{3}{2}<s < 0$, then
\begin{equation}
\left\|\int_{0}^t S(t-t')\partial_x(uv)(t') \ dt'\right\|_{X_T^s} \lesssim_{s} e^{\frac{2\sqrt{2}\,\mu T}{\sqrt{27}}} T^{\frac{2s+3}{6}}\left\|u\right\|_{X_T^s}\left\|v\right\|_{X_T^s},
\end{equation}
for all $u,v\in X_T^s$.
\end{prop}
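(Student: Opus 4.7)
The plan is to bound separately the two pieces of the $X_T^s$-norm of
$W(t):=\int_0^t S(t-t')\partial_x(uv)(t')\,dt'$
by using Minkowski's inequality in time together with pointwise-in-Fourier estimates. The low-regularity ingredient that makes everything work at negative $s$ is Young's convolution inequality,
\begin{equation*}
|\widehat{(uv)}(t',\xi)|=|(\widehat{u}*\widehat{v})(t',\xi)|\leq \|u(t')\|\,\|v(t')\|,
\end{equation*}
which, combined with $\|u(t')\|\leq (t')^{-|s|/3}\|u\|_{X_T^s}$, reduces the whole argument to controlling two deterministic spatial $L^2$ integrals coming from $S(\tau)\partial_x$.

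For the $H^s$-piece, Minkowski gives $\|W(t)\|_s\leq \int_0^t\|\langle\xi\rangle^s\xi e^{\mu(t-t')(|\xi|-|\xi|^3)}\|_{L^2_\xi}\|u(t')\|\|v(t')\|\,dt'$, so the critical estimate is
\begin{equation*}
\int_{\mathbb{R}} \langle\xi\rangle^{2s}\,\xi^{2}\,e^{2\mu\tau(|\xi|-|\xi|^3)}\,d\xi\;\lesssim_s\;e^{C\mu\tau}+(\mu\tau)^{-(2s+3)/3}.
\end{equation*}
I would split at $|\xi|=\sqrt{2}$: the low-frequency part is bounded by an exponential factor (this is where Lemma~\ref{LemmaLB1} with $\lambda=0$ contributes the explicit constant $e^{2\sqrt{2}\mu T/\sqrt{27}}$); the high-frequency part, using $|\xi|-|\xi|^3\leq -|\xi|^3/2$ and the substitution $u=\mu\tau|\xi|^3$, turns into a gamma function $\Gamma((2s+3)/3)$. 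This is precisely where $s>-3/2$ is needed, as this is the exact convergence threshold of the gamma integral.

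For the weighted $L^2$-piece, the analogous computation uses $\int \xi^{2} e^{2\mu\tau(|\xi|-|\xi|^3)}\,d\xi\lesssim e^{C\mu\tau}+(\mu\tau)^{-1}$, which after taking a square root gives the kernel $(\mu\tau)^{-1/2}$. Finally, the time convolution reduces to a Beta-function integral $\int_0^t (t-t')^{-\alpha}(t')^{-2|s|/3}\,dt'=t^{1-\alpha+2s/3}\,B(1-\alpha,1-2|s|/3)$, with $\alpha=(2s+3)/6$ for the $H^s$-piece and $\alpha=1/2$ for the weighted piece. Convergence of the beta function requires $|s|<3/2$, which is automatic. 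The bookkeeping shows that both pieces produce the same factor $T^{(2s+3)/6}$: for the $H^s$-piece directly, and for the $L^2$-piece after multiplication by the weight $t^{|s|/3}$, since $\tfrac12-|s|/3=(2s+3)/6$ when $s<0$.

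The main obstacle is the spatial gamma-function computation: the exponent $(2s+3)/3$ tips over to divergence exactly at $s=-3/2$, mirroring the scaling prediction in the introduction. A secondary delicate point is the algebraic coincidence that the two different time-integrals (one with $\alpha=(2s+3)/6$, one with $\alpha=1/2$) return to the same power of $T$ after taking the $X_T^s$-weight into account; this is the arithmetic reason why the single scale $T^{(2s+3)/6}$ governs the full contraction argument on the integral equation~(\ref{intequation}).
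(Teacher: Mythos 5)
Your proposal is correct and follows essentially the same route as the paper's proof: Young's inequality to pull out $\|u(t')\|\|v(t')\|\lesssim (t')^{-2|s|/3}\|u\|_{X_T^s}\|v\|_{X_T^s}$, an $L^2_\xi$ bound on the semigroup kernel producing the singular powers $(t-t')^{-(2s+3)/6}$ and $(t-t')^{-1/2}$ (your split at $|\xi|=\sqrt2$ plus the Gamma-function computation is just a rephrasing of the paper's rescaling $w=t^{1/3}\xi$, and both locate the threshold $s>-3/2$ in the same place), and finally the Beta-function time integral yielding $T^{(2s+3)/6}$ for both components of the $X_T^s$-norm.
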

\begin{proof}
Since $s< 0$, it follows that $\left\langle\xi \right\rangle^{s}\leq |\xi|^s$, for all real number $\xi$ different from zero. Then we deduce that
\begin{equation}\label{LBequa2}
\begin{aligned}
&  \left\|\int_{0}^t S(t-t')\partial_x(uv)(t') \ dt'\right\|_{s} \\
& \hspace{30pt} \leq  \int_{0}^t \left\|\left\langle \xi \right\rangle^s e^{\mu \left(|\xi|-|\xi|^3\right)(t-t')} \left(\partial_x(uv)(t')\right)^{\wedge}(\xi) \right\|_{} \ dt' \\
& \hspace{30pt} \leq  \int_{0}^t \left\||\xi|^{1+s}e^{\mu\left(|\xi|-|\xi|^3\right)(t-t')}\right\|\left\| \widehat{u(t')}\ast \widehat{v(t')}(\xi) \right\|_{L^{\infty}(\mathbb{R})} \ dt'. 
\end{aligned}
\end{equation}
The Young inequality implies that
\begin{equation}\label{LBequa3}
\left\| \widehat{u(t')}\ast\widehat{v(t')}(\xi) \right\|_{L^{\infty}(\mathbb{R})}\leq \left( \frac{\left\|u\right\|_{X_T^s}\left\|v\right\|_{X_T^s}}{|t'|^{2|s|/3}} \right),
\end{equation}
thus we obtain
\begin{equation}\label{LBequa4}
\begin{aligned}
\int_{0}^t & \left\| S(t-t')\partial_x(uv)(t') \right\|_{s}  \ dt'  \\
& \qquad \leq  \int_{0}^t \frac{\left\||\xi|^{1+s}e^{\mu\left(|\xi|-|\xi|^3\right)t'}\right\|_{}}{|t-t'|^{2|s|/3}} \, dt' \, \left\|u\right\|_{X_T^s}\left\|v\right\|_{X_T^s}. 
\end{aligned}
\end{equation}
To estimate the integral on the right-hand side of \eqref{LBequa4}, we perform the change of variables $w=t^{1/3}\xi$ to deduce 
\begin{align}\label{LBequa5}
\left\||\xi|^{1+s}e^{\mu\left(|\xi|-|\xi|^3\right)t}\right\|& \leq \frac{\left\||w|^{1+s}e^{-\frac{\mu |w|^3}{2}}\right\|\left\|e^{\mu( |w|t^{2/3}- \frac{ |w|^3}{2})}\right\|_{L^{\infty}(\mathbb{R})}}{|t|^{\frac{2s+3}{6}}} \nonumber \\
& \lesssim_{s} \frac{e^{\frac{2\sqrt{2} \,\mu  T}{\sqrt{27}}}}{|t|^{\frac{2s+3}{6}}},
\end{align}
where we have used the following inequality
$$e^{\mu( |\xi|t^{2/3}- \frac{ |\xi|^3}{2})}\leq e^{\frac{2\sqrt{2}\,\mu t}{\sqrt{27}}}, \qquad \text{for all}\;\,\xi \in \mathbb{R}.$$
Therefore, we get from \eqref{LBequa4} and \eqref{LBequa5} that
\begin{equation}\label{LBequa6}
\begin{aligned}
& \left\|\int_{0}^t  S(t-t')\partial_x(uv)(t')  \ dt' \right\|_{s}  \\
& \hspace{20pt} \lesssim_s e^{\frac{2\sqrt{2}\, \mu T}{\sqrt{27}}}T^{\frac{1}{6}(3+2s)}\left( \int_{0}^1 \frac{1}{|\sigma|^{\frac{2|s|}{3}}|1-\sigma|^{\frac{2s+3}{6}}}\ d\sigma \right)\left\|u\right\|_{X_T^s}\left\|v\right\|_{X_T^s},   
\end{aligned}
\end{equation}
for all $0\leq t \le T$. On the other hand, arguing as above, we have for all $0\leq t\leq T$ that
\begin{align}\label{LBequa7}
 t^{|s|/3}&\left\|\int_{0}^t S(t-t')\partial_x(uv)(t') \ dt'\right\|_{} \nonumber \\
&\leq t^{|s|/3} \int_{0}^t \left\||\xi|e^{\mu \left(|\xi|-|\xi|^3\right)(t-t')}\right\|\left\| \widehat{u(t')}\ast \widehat{v(t')}(\xi) \right\|_{L^{\infty}(\mathbb{R})} \ dt' \nonumber \\
&  \leq t^{|s|/3}\int_{0}^t \frac{\left\||\xi|e^{\mu \left(|\xi|-|\xi|^3\right)t'}\right\|}{|t-t'|^{2|s|/3}} \, dt' \, \left\|u\right\|_{X_T^s}\left\|v\right\|_{X_T^s}  \nonumber \\
& \lesssim_s e^{\frac{2\sqrt{2}\,\mu T}{\sqrt{27}}} t^{|s|/3} \left( \int_{0}^t |t'|^{-\frac{1}{2}}|t-t'|^{-2|s|/3}\, dt' \right)\left\|u\right\|_{X_T^s}\left\|v\right\|_{X_T^s} \nonumber \\
& \lesssim_s e^{\frac{2\sqrt{2}\,\mu T}{\sqrt{27}}} T^{\frac{1}{6}(3+2s)} \left( \int_{0}^1 |\sigma|^{-2|s|/3}|1-\sigma|^{-1/2}\, d\sigma \right)\left\|u\right\|_{X_T^s}\left\|v\right\|_{X_T^s}.
\end{align}
Combing \eqref{LBequa6} and \eqref{LBequa7} the proof is complete.
\end{proof}
\begin{rem}
If we consider $s'>s>-\frac{3}{2}$, then modifying the space $X_{T}^{s'}$ by
$$\tilde{X}_{T}^{s'}=\left\{u\in X_{T}^{s'}: \left\|u\right\|_{\tilde{X}_{T}^{s'}}<\infty \right\},$$
where 
$$ \left\|u\right\|_{\tilde{X}_{T}^{s'}}= \left\|u\right\|_{X_{T}^{s'}}+t^{|s|/3} \left\|(1-\partial_x^2)^{\frac{ s'-s}{2}} u\right\|$$
and using that
$$(1+\xi^2)^{s'/2}\lesssim (1+\xi^2)^{s/2}(1+\xi_1^2)^{(s'-s)/2}+(1+\xi^2)^{s/2}\left(1+(\xi-\xi_1)^2\right)^{(s'-s)/2},$$
for all $\xi,\xi_1\in \mathbb{R}$, we deduce arguing as in Proposition \ref{PropLB2} that
$$\left\|\int_{0}^t S(t-t')\partial_x(uv)(t') \ dt'\right\|_{\tilde{X}_T^{s'}} \lesssim_{s}e^{\frac{2\sqrt{2}\,\mu T}{\sqrt{27}}} T^{\frac{2s+3}{6}}\left(\left\|u\right\|_{\tilde{X}_T^{s'}}\left\|v\right\|_{X_T^s}+\left\|u\right\|_{X_T^s}\left\|v\right\|_{\tilde{X}_T^{s'}}\right).$$
\end{rem}
Next regularization property is a consequence of the semi-group property in Lemma \ref{emuc0enhs} $(ii)$,  and we refer to  Proposition 4 in \cite{P} for its proof. 
\begin{prop}\label{PropLB3}
Let $0\leq T \leq 1$, $s \in (-\frac{3}{2},0)$ and $\delta\in [0,s+\frac{3}{2})$, then the application 
$$t\longmapsto \int_{0}^t S(t-t')\partial_x(u^2)(t')\ dt' ,$$
is in $C\left((0,T];H^{s+\delta}(\mathbb{R})\right)$, for every $u\in X_T^s$.
\end{prop}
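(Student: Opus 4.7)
My approach is to repeat the scheme of Proposition \ref{PropLB2}, raising the target regularity from $s$ to $s+\delta$ and paying for it with the dissipative smoothing of $S(t-t')$ encoded in Lemma \ref{emuc0enhs}(ii). The point is that, under the hypothesis $\delta<s+\tfrac{3}{2}$, the new singular factor $(t-t')^{-\delta/3}$ still combines with the $X^s_T$-singularity $(t')^{-2|s|/3}$ to produce a convergent Beta integral.

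For the a priori $H^{s+\delta}$-bound I start from
$$\left\|\int_0^t S(t-t')\partial_x(u^2)(t')\, dt'\right\|_{s+\delta} \leq \int_0^t \left\|\langle \xi\rangle^{s+\delta}\, \xi\, e^{\mu(|\xi|-|\xi|^3)(t-t')}\, \widehat{u(t')}\ast \widehat{u(t')}\right\|\, dt',$$
split $\langle \xi\rangle^{s+\delta}\lesssim \chi_{\{|\xi|\le 1\}}+|\xi|^{s+\delta}\chi_{\{|\xi|\geq 1\}}$, use Young's inequality to bound $\|\widehat{u(t')}\ast\widehat{u(t')}\|_{L^\infty}\leq \|u(t')\|^2\leq\|u\|_{X_T^s}^2\, (t')^{-2|s|/3}$, and perform the change of variables $w=(t-t')^{1/3}\xi$ exactly as in (\ref{LBequa5}) to deduce
$$\bigl\||\xi|^{1+s+\delta}e^{\mu(|\xi|-|\xi|^3)(t-t')}\bigr\| \lesssim e^{c\mu T}(t-t')^{-\frac{2(s+\delta)+3}{6}}.$$
Since $\delta<s+\tfrac{3}{2}$ gives $\tfrac{2(s+\delta)+3}{6}<1$ and $s>-\tfrac{3}{2}$ gives $\tfrac{2|s|}{3}<1$, the resulting Beta integral in $t'$ converges and one obtains a bound of the form $C\|u\|_{X_T^s}^2\, t^{\gamma}$ with some $\gamma>0$. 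The low-frequency piece is handled by the same calculation with $|\xi|^{s+\delta}$ replaced by $1$ and is easier.

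For continuity at $t_0\in(0,T]$, fix $t>t_0$ (the case $t<t_0$ is symmetric) and write
$$\int_0^t S(t-t')\partial_x(u^2)(t')\, dt'-\int_0^{t_0} S(t_0-t')\partial_x(u^2)(t')\, dt'=\mathrm{I}+\mathrm{II},$$
with $\mathrm{I}=\int_{t_0}^t S(t-t')\partial_x(u^2)(t')\, dt'$ and $\mathrm{II}=\int_0^{t_0}\bigl(S(t-t')-S(t_0-t')\bigr)\partial_x(u^2)(t')\, dt'$. The boundary term $\mathrm{I}$ is $O(|t-t_0|^\gamma)$ in $H^{s+\delta}$ by the previous step. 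For $\mathrm{II}$, I further split the $t'$-integral at $t_0-\varepsilon$: on $[t_0-\varepsilon,t_0]$ each of the two summands is controlled separately by the singular estimate above, contributing an error of order $\varepsilon^{\gamma}$ uniformly for $t$ close to $t_0$; on $[0,t_0-\varepsilon]$ the integrand stays uniformly bounded in $t$ (since $t-t'\geq \varepsilon/2$), and for each fixed $t'$ it tends to zero in $H^{s+\delta}$ as $t\to t_0$ by the strong continuity of the semigroup, so dominated convergence gives the limit. Sending $t\to t_0$ first and then $\varepsilon\to 0$ yields the claim. The main obstacle is precisely this two-parameter limit: arranging that the $\varepsilon$-tail is uniformly small in $t$ while simultaneously exploiting strong continuity of $S(\cdot)$ on the bulk is the only nontrivial point, all the rest being a direct upgrade of the bilinear estimate in Proposition \ref{PropLB2}.
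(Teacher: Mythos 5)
Your argument is correct and is exactly the intended one: the paper omits the proof of Proposition \ref{PropLB3}, deferring to Proposition 4 of Pilod's work \cite{P}, whose proof runs precisely along your lines (the smoothing bound $\||\xi|^{1+s+\delta}e^{\mu(|\xi|-|\xi|^3)\tau}\|\lesssim \tau^{-\frac{2(s+\delta)+3}{6}}$ combined with the $X_T^s$ weight to get a convergent Beta integral, then the $\varepsilon$-splitting plus strong continuity of the semigroup for the continuity statement). The only cosmetic point is that $\frac{2(s+\delta)+3}{6}<1$ really comes from $s+\delta<3/2$ (automatic since $s<0$ and $\delta<3/2$), while the hypothesis $\delta<s+\frac{3}{2}$ is what makes the final exponent $\gamma=\frac{3+2s-2\delta}{6}$ positive, as you use for the bound $C\|u\|_{X_T^s}^2 t^{\gamma}$.
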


\subsection{LWP in $H^s(\mathbb{R})$ for $s\in (-3/2,0)$}
\begin{proof}[Proof of Theorem \ref{localresult}] We divide the proof in four steps
\\ \\
1. \emph{Existence}. Let $\phi \in H^s(\mathbb{R})$ with $s> -\frac{3}{2}$. We consider the application
$$\Psi(u)=S(t)\phi-\frac{1}{2}\int_{0}^t S(t-t')\partial_x(u^2(t')) \ dt',$$
for each $u \in X_T^s$. By Proposition \ref{PropLB1} together with Proposition \ref{PropLB2}, when $s< 0$, there exists a positive constant $C=C(\mu,s)$ such that 
\begin{align}
\left\|\Psi(u)\right\|_{X_T^s} &\leq C\left(\left\|\phi\right\|_{s}+T^{g(s)}\left\|u\right\|_{X_T^s}^2\right), \label{WPequa1} \\
\left\|\Psi(u)-\Psi(v)\right\|_{X_T^s} & \leq C T^{g(s)}\left\|u-v\right\|_{X_T^s}\left\|u+v\right\|_{X_T^s}, \label{WPequa2}
\end{align}
for all $u, v \in X_T^s$ and $0<T\leq 1$. Where $g(s)=\frac{1}{6}(3+2s)$, when $s\in (-\frac{3}{2},0)$. Then, we define $E_{T}(\gamma)=\left\{u\in X_T^s : \left\|u \right\|_{X_T^s}\leq\gamma \right\}$, with $\gamma=2C\left\|\phi \right\|_{s}$ and $0<T\leq \min \left\{1,\left(4C\gamma\right)^{-\frac{1}{g(s)}} \right\}$. The estimates \eqref{WPequa1} and \eqref{WPequa2} imply that $\Psi$ is a contraction on the complete metric space $E_T(\gamma)$.  Therefore, the Fixed Point Theorem implies the existence of a unique solution $u$ of \eqref{intequation} in $E_T(\gamma)$ with $u(0)=\phi$. 
\\ \\
2. \emph{Continuous dependence}. We will verify that the map $\phi \in H^s(\mathbb{R}) \mapsto u \in X_T^s$, where $u$ is a solution of \eqref{npbo} obtained in the step of \emph{Existence} is continuous. More precisely, for $s>-\frac{3}{2}$, if $\phi_n \rightarrow \phi_{\infty}$ in $H^s(\mathbb{R})$, let $u_n\in X_{T_n}^s$ be the respective solutions of \eqref{intequation} (obtained in the part of  \emph{Existence}) with $u_n(0)=\phi_n$, for all $1\leq n\leq \infty$. Then for each $T'\in (0,T_{\infty})$, $u_n \in X_{T'}^s$ (for $n$ large enough) and $u_n \rightarrow u_{\infty}$ in $X_{T'}^s$.

We recall that the solutions and times of existence previously constructed satisfy 
\begin{align}
&0<T_n\leq \min\left\{1, \left(8C^2\left\|\phi_n\right\|_{s}\right)^{-\frac{1}{g(s)}}\right\}, \label{WPequa3} \\
&\left\|u_n\right\|_{X_T^s} \leq 2C\left\|\phi_n\right\|_{s}, \label{WPequa4}
\end{align}
for all $n\in \mathbb{N}\cup\left\{\infty\right\}$. Let $T'\in (0,T_{\infty})$, the above inequalities and the hypothesis imply that there exists $N\in \mathbb{N}$, such that for all $n\geq N$, we have that $T'\leq T_n$ and
$$\frac{\left\|\phi_n\right\|_{s}+\left\|\phi_{\infty}\right\|_{s}}{\left\|\phi_{\infty}\right\|_{s}}\leq 3.$$
Therefore, combining \eqref{WPequa3}, \eqref{WPequa4} with the Propositions \ref{PropLB1} and \ref{PropLB2}, it follows that for each $n\geq N$ 
\begin{align*}
\left\|u_n-u_{\infty}\right\|_{X_{T'}^s} &\leq C\left\|\phi_n-\phi_{\infty}\right\|_{s}+ CT_{\infty}^{g(s)}\left\|u_n+u_{\infty}\right\|_{X_{T'}^s}\left\|u_n-u_{\infty}\right\|_{X_{T'}^s} \\
&\leq C\left\|\phi_n-\phi_{\infty}\right\|_{s}+ \frac{\left(\left\|\phi_n\right\|_{s}+\left\|\phi_{\infty}\right\|_{s}\right)}{4\left\|\phi_{\infty}\right\|_{s}}\left\|u_n-u_{\infty}\right\|_{X_{T'}^s} \\
&\leq C\left\|\phi_n-\phi_{\infty}\right\|_{s}+ \frac{3}{4}\left\|u_n-u_{\infty}\right\|_{X_{T'}^s}. 
\end{align*}
Hence we have deduced that $\left\|u_n-u_{\infty}\right\|_{X_{T'}^s}\leq C\left\|\phi_n-\phi_{\infty}\right\|_{s}$, for all $n\geq N$.
\\ \\
3. \emph{Uniqueness}. Let $u,v \in X_T^s$ be solutions of the integral equation \eqref{intequation} on $[0,T]$ with the same initial data. For each $r\in [0,T]$ we define
$$
G_{r}(t)=
\begin{cases}
\frac{1}{2}\int_{r}^{t} S(t-t')\left(\partial_x u^2(t')-\partial_x v^2(t')\right)dt', & \text{if }t\in (r,T] \\
0, & \text{if }t\in[0,r]
\end{cases}
$$
for all $t \in [0,T]$. Arguing as in the proof of Proposition \ref{PropLB2} we deduce that there exists a positive constant $C=C(\mu,s)$ depending only on $\mu$ and $s$, such that for all $r\in[0,T]$ and all $\vartheta\in [r,T]$,
\begin{equation}\label{WPequa5}
\left\|G_{r}\right\|_{X_{\vartheta}^s} \leq C K \left(\vartheta-r\right)^{g(s)}\left\|u-v\right\|_{X_{\vartheta}^s},
\end{equation}
where $K=\left\|u\right\|_{X_{T}^s}+\left\|v\right\|_{X_{T}^s}$. In particular, inequality \eqref{WPequa5} implies that
\begin{equation}\label{WPequa6}
\left\|u-v\right\|_{X_{\vartheta}^s}=\left\|G_{0}\right\|_{X_{\vartheta}^s} \leq C K \vartheta^{g(s)}\left\|u-v\right\|_{X_{\vartheta}^s}.
\end{equation}
Thus, choosing $\vartheta \in \left(0,(CK)^{-\frac{1}{g(s)}}\right)$ a fixed number, \eqref{WPequa6} implies that $u \equiv v$ on $[0,\vartheta]$. Therefore we can iterate this argument using \eqref{WPequa5} and our choose of $\vartheta$, until we extend the uniqueness result to the whole interval $[0,T]$.
\\ \\
 \emph{4. The solution $ u\in C\left((0,T],H^{\infty}(\mathbb{R})\right)$}. From Lemma \ref{LemmaLB1} and arguing as in the proof of Proposition 2.2 in \cite{BI}, we have that the map $t\mapsto S(t)\phi$ is continuous in the interval $(0,T]$ with respect to the topology of $H^{\infty}(\mathbb{R})$. Since our solution $u$ is in $X_T^s$, we deduce from Proposition \ref{PropLB3} that, there exists $\lambda>0$ such that 
$$u\in C\left([0,T];H^s(\mathbb{R})\right)\cap C\left((0,T];H^{s+\lambda}(\mathbb{R})\right).$$ 
Therefore we can iterate this argument, using uniqueness result and the fact that the time of existence of solutions depends uniquely on the $H^s(\mathbb{R})$-norm of the initial data. Thus we deduce that 
$$u\in C\left([0,T];H^s(\mathbb{R})\right)\cap C\left((0,T];H^{\infty}(\mathbb{R})\right).$$
\end{proof}
\subsection{LWP in $H^s(\mathbb{R})$ for $s\geq 0$}
For simplicity, we assume that  $\mu=1$ and $0<T\leq 1$. We will mainly work with the integral formulation (\ref{intequation}) of the IVP (\ref{npbo}).
\begin{lema} Let $\mu=1$, $0\leq s\leq 1/2$, $0\leq \tau \leq t \leq T\leq 1$ and $u\in C([0,T], H^s(\mathbb{R}))$.
\begin{equation}\label{cotast}
\int_0^t\nor{S(t-\tau)\partial_xu^2(\tau)}{s}\,d\tau \leq C_s\,T^{(3-2s)/6}\,\nora{u}{L_T^{\infty}H_x^s}{2}
\end{equation}
\end{lema}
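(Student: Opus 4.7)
The plan is to reduce the left-hand side to an $L^2$-norm in frequency of an explicit symbol times $\widehat{u^2}(\tau)$, estimate the latter pointwise by the simple embedding $\widehat{u^2}\in L^\infty$, and then handle the remaining time integral by a scaling change of variables essentially identical to the one used in Proposition \ref{PropLB2}.

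More precisely, using the semigroup formula \eqref{semigroup} and the Cauchy--Schwarz inequality in the $\xi$ variable, for every $\tau\in[0,t]$ one has
\begin{equation*}
\nor{S(t-\tau)\partial_x u^2(\tau)}{s}
= \nor{\langle\xi\rangle^{s} i\xi\, e^{\mu(|\xi|-|\xi|^3)(t-\tau)}\, \widehat{u^2}(\xi,\tau)}{L^2_{\xi}}
\leq A(t-\tau)\,\|\widehat{u^2}(\tau)\|_{L^{\infty}_\xi},
\end{equation*}
where $A(\sigma):=\bigl\|\langle\xi\rangle^{s}|\xi|\,e^{\mu(|\xi|-|\xi|^3)\sigma}\bigr\|_{L^2_\xi}$. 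The factor involving $u$ is then controlled by the Hausdorff--Young inequality and $s\geq 0$:
\begin{equation*}
\|\widehat{u^2}(\tau)\|_{L^\infty_\xi}\leq \|u^2(\tau)\|_{L^1}=\|u(\tau)\|_{L^2}^{2}\leq \nora{u}{L^\infty_T H^s_x}{2}.
\end{equation*}

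Next I would estimate $A(\sigma)$. Since $s\leq 1/2$, we have $\langle\xi\rangle^{s}|\xi|\lesssim |\xi|+|\xi|^{s+1}$, so after the scaling $w=\sigma^{1/3}\xi$ (as in \eqref{LBequa5}),
\begin{equation*}
A(\sigma)^2 \lesssim \sigma^{-1}\!\int_{\mathbb R}|w|^{2}e^{2\mu(|w|\sigma^{2/3}-|w|^3)}\frac{dw}{\sigma^{2/3}}
+\sigma^{-1}\!\int_{\mathbb R}|w|^{2s+2}e^{2\mu(|w|\sigma^{2/3}-|w|^3)}\frac{dw}{\sigma^{(2s+2)/3}}.
\end{equation*}
For $0<\sigma\leq T\leq 1$ the pointwise bound $|w|\sigma^{2/3}-|w|^3/2\leq 2\sqrt{2}/\sqrt{27}$ (used already in the proof of Proposition \ref{PropLB2}) renders both integrals uniformly bounded, and the dominant exponent is the one giving $A(\sigma)\lesssim_s \sigma^{-(2s+3)/6}$ (the $|w|^2$ contribution only produces a better $\sigma^{-5/6}$, which is absorbed since $s\geq 0$).

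It only remains to integrate in time:
\begin{equation*}
\int_0^t A(t-\tau)\,d\tau \lesssim_s \int_0^t (t-\tau)^{-(2s+3)/6}\,d\tau = \frac{6}{3-2s}\,t^{(3-2s)/6}\leq C_s\,T^{(3-2s)/6},
\end{equation*}
where the integrability of the kernel is guaranteed by $(2s+3)/6<1\iff s<3/2$, which holds since $s\leq 1/2$. Combining the three bounds yields \eqref{cotast}. The only mildly delicate point is ensuring that the low-frequency contribution to $A(\sigma)^2$ is harmless: near $\xi=0$ the integrand $\langle\xi\rangle^{2s}\xi^2 e^{2\mu(|\xi|-|\xi|^3)\sigma}$ is bounded, so the $L^2$ norm of the symbol is dominated by its high-frequency part, which is precisely what the scaling argument captures.
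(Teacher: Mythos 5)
Your overall architecture is the same as the paper's: bound $\nor{S(t-\tau)\partial_x u^2(\tau)}{s}$ by $A(t-\tau)\,\|\widehat{u^2}(\tau)\|_{L^\infty_\xi}$ with $A(\sigma)=\bigl\|\langle\xi\rangle^{s}|\xi|e^{\mu(|\xi|-|\xi|^3)\sigma}\bigr\|_{L^2_\xi}$, control $\|\widehat{u^2}(\tau)\|_{L^\infty_\xi}$ by $\|u(\tau)\|_{L^2}^2$, and reduce the lemma to integrating $A(t-\tau)$ in time. The only difference is how $A(\sigma)$ is estimated: the paper splits the $\xi$-integral at $|\xi|=\sqrt{2}$ and computes the resulting Gamma-type integrals directly, obtaining $A(\sigma)\lesssim_s e^{\mu\sigma}+\sigma^{-1/2}+\sigma^{-(2s+3)/6}$, whereas you use the scaling $w=\sigma^{1/3}\xi$. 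That route is perfectly viable, but your power counting in it is wrong, and the error is not cosmetic. Since $d\xi=\sigma^{-1/3}dw$ and $|\xi|^{2s+2}=\sigma^{-(2s+2)/3}|w|^{2s+2}$, the correct identities are
\begin{align*}
\int_{\mathbb R}|\xi|^{2}e^{2\mu(|\xi|-|\xi|^3)\sigma}\,d\xi&=\sigma^{-1}\int_{\mathbb R}|w|^{2}e^{2\mu(|w|\sigma^{2/3}-|w|^3)}\,dw,\\
\int_{\mathbb R}|\xi|^{2s+2}e^{2\mu(|\xi|-|\xi|^3)\sigma}\,d\xi&=\sigma^{-\frac{2s+3}{3}}\int_{\mathbb R}|w|^{2s+2}e^{2\mu(|w|\sigma^{2/3}-|w|^3)}\,dw,
\end{align*}
while your display carries an extra factor of $\sigma^{-2/3}$ in each term (and is then internally inconsistent with the exponent $\sigma^{-(2s+3)/6}$ you quote for $A$).

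The consequence matters. With the correct powers, $A(\sigma)\lesssim \sigma^{-1/2}+\sigma^{-(2s+3)/6}$, and it is the $\sigma^{-1/2}$ term — not $\sigma^{-5/6}$ — that is absorbed, precisely because $s\geq0$ gives $1/2\leq (2s+3)/6$. Your parenthetical claim that the $|w|^2$ contribution "only produces a better $\sigma^{-5/6}$, which is absorbed since $s\geq0$" is backwards: $5/6>(2s+3)/6$ for every $s<1$, so on $[0,1/2]$ a genuine $\sigma^{-5/6}$ singularity would dominate, and the time integral would then only yield $T^{1/6}$, which for $T\leq1$ is \emph{weaker} than the claimed $T^{(3-2s)/6}$; the lemma as stated would not follow from your displayed estimate. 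Once the Jacobian is corrected the argument closes exactly as you describe (your remaining steps — the $L^\infty$ bound on $\widehat{u^2}$, the uniform boundedness of the rescaled integrals for $\sigma\leq 1$, the handling of low frequencies, and the final time integration with $(2s+3)/6<1$ — are all fine).
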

\begin{proof}
\begin{align}
&\nora{S(t-\tau)\partial_xu^2(\tau)}{s}{2}=\int_{\mathbb{R}}(1+\xi^2)^se^{2(|\xi|-|\xi|^3)(t-\tau)}\xi^2|\widehat{u}\ast \widehat{u}(\xi)|^2\,d\xi \notag \\
&\qquad \leq c_s  \Bigl(\int_{\mathbb{R}}\xi^2e^{2(|\xi|-|\xi|^3)(t-\tau)},d\xi + \int_{\mathbb{R}}\xi^{2(s+1)}e^{2(|\xi|-|\xi|^3)(t-\tau)},d\xi \Bigr)\,\nora{\widehat{u}\ast \widehat{u}(\xi)}{L_{\xi}^{\infty}}{2} \label{cotastuno}
\end{align}
Since $\xi-\xi^3\leq 1$, for $0\leq \xi\leq \sqrt{2}$, and $\xi-\xi^3\leq -\xi^3/2$, for $\xi\geq \sqrt{2}$, we have
\begin{align}
\int_0^{\infty}\xi^2e^{2(\xi-\xi^3)t},d\xi &\leq \int_0^{\sqrt{2}}\xi^2e^{2 t}\,d\xi + \int_{\sqrt{2}}^{\infty}\xi^2e^{- t \xi^3}\,d\xi \leq c(e^{2 t}+(3 t)^{-1}), \label{cotastcuatro} \\
\intertext{and}
\int_0^{\infty}\xi^{2(s+1)}e^{2(\xi-\xi^3)t},d\xi &\leq \int_0^{\sqrt{2}}\xi^{2s+2}e^{2 t}\,d\xi + \int_{\sqrt{2}}^{\infty}\xi^{2s+2}e^{- t \xi^3}\,d\xi \notag \\
&\leq c_s \Bigl(e^{2 t}+\dfrac{\Gamma(2s+1)}{3( t)^{1+2s/3}}\Bigr). \label{cotastcinco}
\end{align}
Then, from (\ref{cotastuno}), (\ref{cotastcuatro}), (\ref{cotastcinco}) and Young's inequality
\begin{equation}\label{cotastdos}
\nor{S(t-\tau)\partial_xu^2(\tau)}{s} \leq c_s \Bigl(e^{(t-\tau)}+\dfrac{1}{(t-\tau)^{1/2}}+\dfrac{1}{(t-\tau)^{(2s+3)/6}}\Bigr)\,\nora{u(\tau)}{}{2}.
\end{equation}
Integrating from $0$ to $t$ we obtain
\begin{equation}\label{cotasttres}
\int_0^t\nor{S(t-\tau)\partial_xu^2(\tau)}{s}\,d\tau \leq C_s \Bigl(e^t-1+2t^{1/2}+\dfrac{6}{3-2s}t^{(3-2s)/6}\Bigr)\,\nora{u}{L_t^{\infty}H_x^s}{2}.
\end{equation}
So, we can conclude (\ref{cotast}).
\end{proof}
\begin{proof}[Proof of Theorem \ref{localresult}]  For $T\in(0,1]$, we consider the space $X_T^s=C\left([0,T];H^s(\mathbb{R})\right)$. Let $\phi \in H^s(\mathbb{\mathbb{R}})$, $0\leq s\leq 1/2$. We define the application
\begin{equation}\label{intaplication}
\Psi(u)=S(t)\phi-\frac{1}{2}\int_{0}^t S(t-\tau)\partial_x(u^2(\tau)) \ d\tau, \text{ for each } u \in X_T^s.
\end{equation}
By (\ref{cotaemu}) and (\ref{cotast}), there exists a positive constant $C_s$, such that for all $u, v \in X_T^s$ and $0<T\leq 1$
\begin{align}
\left\|\Psi(u)\right\|_{X_T^s} &\leq C\left(\left\|\phi\right\|_s+T^{\frac{3-2s}{6}}\left\|u\right\|_{X_T^s}^2\right), \label{WP1} \\
\left\|\Psi(u)-\Psi(v)\right\|_{X_T^s} & \leq C T^{\frac{3-2s}{6}}\left\|u-v\right\|_{X_T^s}\left\|u+v\right\|_{X_T^s}, \label{WP2}
\end{align}
for all $s\in [0,\frac{1}{2}]$. Then, let $E_{T}(a)=\left\{u\in X_T^s : \left\|u \right\|_{X_T^s} \leq a=2C\left\|\phi \right\|_s \right\}$, where 
\begin{equation*}
2CaT^{\frac{-2s+3}{6}}\leq \frac{1}{2}\quad \text{ i.e.}\quad 0<T \leq \min \left\{1,\left(4Ca\right)^{\frac{6}{2s-3}} \right\}.
\end{equation*} 
The estimates \eqref{WP1} and \eqref{WP2} imply that $\Psi$ is a contraction on the complete metric space $E_T(a)$. Therefore, we deduce by the Fixed Point Theorem that there exists an unique solution $u$ of the integral equation \eqref{intequation} in $E_T(a)$ and with initial data $u(0)=\phi$. Furthermore, the existence time satisfies 
\begin{equation}\label{timeexistence}
T\lesssim \nora{\phi}{s}{6/(2s-3)}.
\end{equation}
The rest of the proof follows canonical arguments, so we omit it.
\end{proof}
\begin{rem}
From the inequality of regularization (\ref{regulariza}) for the semigroup $S(t)$ and a Gronwall's type inequality (see 1.2.1 in \cite{H}) we have that for the solutions of the IVP (\ref{npbo}), $u(t)\in H^{\infty}(\mathbb{R})$ for all $t>0$ and, in particular, for all $t\in (0,T]$ and $0\leq s\leq1/2$,
\begin{equation}\label{estimativaunmedio+}
\nor{u(t)}{1/2+}\leq C \nor{\phi}{s} t^{(s-(\frac{1}{2}+))/3}.
\end{equation}
\end{rem}
\begin{rem}
When $s>1/2$, $H^s(\mathbb{R})$ is a Banach algebra and the local theory for the IVP (\ref{npbo}) is reduced to consider the space $X_T^s=C\left([0,T];H^s(\mathbb{T})\right)$ and $E_{T}(a)=\left\{u\in X_T^s : \left\|u \right\|_{X_T^s} \leq a=2C\left\|\phi \right\|_s \right\}$ where $\phi \in H^s(\mathbb{R})$ and $T$ will be chosen. We define the application $\Psi(u)$ as in (\ref{intaplication}) and by (\ref{cotaemu}) and (\ref{regulariza}) we have easily that
\begin{align}
\nor{\Psi(u)}{s} &\leq \nor{S(t)\phi}{s}+1/2\int_0^t\nor{S(t-\tau)u^2(\tau)}{s+1}\,d\tau \notag \\
&\leq C\Bigl(\nor{\phi}{s}+ \int_0^t\dfrac{\nor{u^2}{s}}{(t-\tau)^{1/3}}\,d\tau\Bigr) \notag \\
&\leq C\bigl(\nor{\phi}{s}+ \nora{u}{X_T^s}{2}T^{2/3}\bigr)\leq \dfrac{a}{2}+Ca^2T^{2/3}. \label{lwps>1/2}
\end{align}
Hence, according to (\ref{lwps>1/2}) we choose $T$ such that $T^{2/3}< \frac{1}{4C^2\nor{\phi}{s}}$ to obtain that $\Psi$ is a contraction. So, IVP (\ref{npbo}) is LWP in $H^s(\mathbb{R})$ for $s>1/2$ and the existence time of the solution satisfies
\begin{equation}\label{times>1/2}
T\sim \nora{\phi}{s}{-3/2}.
\end{equation} 
\end{rem}
In a similar way to Proposition \ref{PropLB3}, we have  
\begin{prop}\label{regularity}
Let $0\leq T \leq 1$, $s\geq 0$ and $\delta\in [0,s+\frac{3}{2})$, then the application 
$$t\longmapsto \int_{0}^t S(t-t')\partial_x(u^2)(t')\ dt' ,$$
is in $C\left((0,T];H^{s+\delta}(\mathbb{R})\right)$, for every $u\in C([0,T];H^s(\mathbb{R}))$.
\end{prop}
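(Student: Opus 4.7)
The plan is to mimic the proof of Proposition \ref{PropLB3} by Pilod \cite{P}, with the simplification that for $s\geq 0$ the space $X_T^s$ is replaced by $C([0,T];H^s(\mathbb{R}))$ and the bound $\|u(t)\|_{L^2}\leq\|u(t)\|_s$ is uniform in $t\in[0,T]$, so no singular time weight enters.

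The central step is a Fourier-side estimate. Using Young's inequality $\|\widehat{u^2}(t',\cdot)\|_{L^\infty}\leq\|u(t')\|_{L^2}^2\leq\|u(t')\|_s^2$ together with the regularization (\ref{regulariza}) and the change of variables $w=(t-t')^{1/3}\xi$ of Proposition \ref{PropLB2}, I obtain
\begin{equation*}
\|S(t-t')\partial_x(u^2)(t')\|_{s+\delta}\;\lesssim_{\mu,s}\;(t-t')^{-(2(s+\delta)+3)/6}\,\|u\|_{C([0,T];H^s)}^2.
\end{equation*}
Integrating in $t'\in(0,t)$ converges whenever this singularity exponent is strictly less than $1$, i.e.\ $s+\delta<3/2$. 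This already yields the endpoint case $s=0$, $\delta<3/2$.

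To push $\delta$ up to $s+3/2$ when $s>0$, I refine the pointwise bound on $\widehat{u^2}$ via the elementary inequality $\langle\xi\rangle^s\leq C(\langle\xi-\eta\rangle^s+\langle\eta\rangle^s)$ together with Cauchy-Schwarz, which yields $|\widehat{u^2}(\xi)|\lesssim\langle\xi\rangle^{-s}\|u\|_s^2$. Substituting this sharper pointwise estimate absorbs a factor of $\langle\xi\rangle^s$ inside the integral and relaxes the time-integrability constraint on $\delta$. The remaining range is then reached by a bootstrap through the splitting
\begin{equation*}
v(t):=\int_0^{t}S(t-t')\partial_x(u^2)(t')\,dt'=S(t/2)\,v(t/2)+\int_{t/2}^{t}S(t-t')\partial_x(u^2)(t')\,dt',
\end{equation*}
where the second integral is controlled as before on the shorter interval $[t/2,t]$, while $S(t/2)$ applied to the already-improved $v(t/2)\in H^{s+\delta_1}$ gains arbitrary further regularity by Lemma \ref{emuc0enhs}(ii), since $S(t/2)$ acts with a fixed positive time and no integrability cost.

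Continuity of $v$ on $(0,T]$ then follows from dominated convergence for the integral (the majorizing singularity is integrable for the admissible $\delta$) together with the strong continuity of $S(\cdot)$ on $H^{s+\delta}$. The main obstacle is precisely the integrability barrier as $t'\to t$: a single use of (\ref{regulariza}) cannot gain more than three derivatives of $\partial_x(u^2)$, so both the refined pointwise bound on $\widehat{u^2}$ and the iterative splitting are essential to realize the full range $\delta<s+3/2$ stated in the proposition.
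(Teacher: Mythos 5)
Your first two steps are sound and, for what the paper actually uses this proposition for, essentially sufficient: for $s\geq 0$ the Young-inequality bound gives the conclusion when $s+\delta<3/2$, and the refined pointwise bound $|\widehat{u^2}(\xi)|\lesssim\langle\xi\rangle^{-s}\|u\|_{s}^{2}$ (correctly obtained from $\langle\xi\rangle^{s}\lesssim\langle\xi-\eta\rangle^{s}+\langle\eta\rangle^{s}$ and Cauchy--Schwarz) improves the time singularity to $(t-t')^{-(2\delta+3)/6}$, hence yields $v\in C\bigl((0,T];H^{s+\delta}\bigr)$ for every $\delta<3/2$. Since the paper only ever invokes Proposition \ref{regularity} with a small positive $\delta$ to start the iteration towards $H^{\infty}$, this already delivers the needed content, and it is in the spirit of the argument of Pilod that the paper cites in place of a proof. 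The continuity argument via dominated convergence and strong continuity of $S(\cdot)$ is also fine.

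The genuine gap is the bootstrap you use to reach the full stated range $\delta<s+3/2$. In the splitting $v(t)=S(t/2)\,v(t/2)+\int_{t/2}^{t}S(t-t')\partial_x(u^2)(t')\,dt'$, only the first term gains arbitrary regularity; the second term has exactly the same singularity at $t'=t$ as the original integral, and its regularity is governed by that of $\partial_x(u^2)(t')$ for $t'$ near $t$. Since $u$ is an \emph{arbitrary} element of $C([0,T];H^{s}(\mathbb{R}))$ --- it is not assumed to solve the equation, so there is no relation between $u$ and $v$ that could be iterated --- no self-improvement of the integrand is available, and the near-diagonal piece remains only in $H^{s+\delta}$ for $\delta<3/2$. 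Your argument therefore proves the proposition only for $\delta<3/2$. To go beyond, the $L^{\infty}_{\xi}$ pointwise bound must be replaced by a genuine $L^{2}$ bilinear estimate $\|u^{2}\|_{H^{\sigma}}\lesssim\|u\|_{s}^{2}$ with $\sigma<\min(s,2s-\tfrac12)$; a single application of the smoothing estimate (gain of up to $3$ derivatives with integrable singularity) then gives $\delta<s+3/2$ when $0\leq s\leq 1/2$. Note that for $s>1/2$ even this only yields $\delta<2$ (and $u^{2}$ is generically no smoother than $u$), so the full stated range appears attainable only for $s\leq 1/2$; this does not affect the paper, which uses the result with small $\delta$.
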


\subsection{GWP in $H^s(\mathbb{R})$ for $s> -3/2$}

\begin{proof}[Proof of Theorem \ref{globalresult}] Let  $s\geq 0$ and $\phi \in H^s(\mathbb{R})$. It is known that $S(\cdot)\phi$ belongs to $C([0,\infty),H^s(\mathbb{R}))\cap C((0,\infty), H^{\infty}(\mathbb{R}))$. By the Proposition \ref{regularity} we have that
\begin{equation*}
t\longmapsto \int_0^tS(t-t')\,\partial_x(u^2(t'))\,dt'\;\in C([0,T],H^{s+2\delta}(\mathbb{R})),
\end{equation*}
where $u \in C([0,T];H^s(\mathbb{R}))$ is the solution to (\ref{intequation}) that we have already got. So we conclude that
\begin{equation*}
u\in C([0,T],H^s(\mathbb{R}))\cap C((0,T],H^{s+2\delta}(\mathbb{R})).
\end{equation*}
From above we can deduce by induction that $u\in C((0,T],H^{\infty}(\mathbb{R}))$. Define $T^*=T^*(\nor{\phi}{s})$ by
\begin{equation}
T^*=\sup \bigl\{T>0: \exists ! \;\;\text{solution of (\ref{intequation}) in }C([0,T],H^s(\mathbb{R})) \bigr\}.
\end{equation}
Let $u\in C([0,T^*),H^s(\mathbb{R}))\cap C((0,T^*),H^{\infty}(\mathbb{R}))$ be the local solution of (\ref{intequation}) in the maximal time interval $[0,T^*)$. We shall prove that if we assume $T^*<\infty$, then a contradiction follows. Since $u$ is smooth, we deduce that $u$ solves the Cauchy problem (\ref{intequation}) in classical sense, which allows us to take the $L^2$ scalar product of (\ref{intequation}) with $u$ and integrate by parts to obtain
\begin{align*}
\dfrac{1}{2}\dfrac{d}{dt}\|u(t)\|^2&=(u,u_t)_0 \notag \\
&=-(u,uu_x)_0 -(u,\mathcal{H} u_{xx})_0 -\mu(u,\mathcal{H}u_x)_0-\mu(u,\mathcal{H}u_{xxx})_0  \notag \\
&= \mu \int_{\mathbb{R}}(|\xi|-|\xi|^3)|\Hat{u}(\xi)|^2\,d\xi \notag \\
&= \mu \Bigl(\int_{|\xi|\leq 1}(|\xi|-|\xi|^3)|\Hat{u}(\xi)|^2\,d\xi + \int_{|\xi|>1}(|\xi|-|\xi|^3)|\Hat{u}(\xi)|^2\,d\xi\Bigr) \label{partiendo1} \\
&\leq \mu \int_{|\xi|\leq 1}(|\xi|-\xi^2)|\Hat{u}(\xi)|^2\,d\xi \notag \\
&\leq \mu \int_{|\xi|\leq 1}|\Hat{u}(\xi)|^2\,d\xi \notag \\
&\leq \mu \|u(t)\|^2.
\end{align*}
Integrating the last relation between $0$ and $t$, it gives
\begin{align}
\|u(t)\|^2\leq \|\phi\|^2& + 2\mu \int_0^t \|u(\tau)\|^2\,d\tau . \notag
\end{align}
Using the Gronwall's inequality we obtain a priori estimate
\begin{equation}
\|u(t)\|\leq \|\phi\|\,e^{\mu T^*}\equiv M, \quad \forall t\in (0,T^*). \notag
\end{equation}
Since the time existence $T(\cdot)$ is a decreasing function of the norm of the initial data, we know that there exists a time $T_1>0$ such that for all $\varphi \in L^2(\mathbb{R})$, with $\nor{\varphi}{L^2}\leq M$, there exists a unique solution $v(x,t)$ of (\ref{intequation}) satisfying $v(0)=\varphi$ and $v\in C([0,T_1],L^2(\mathbb{R}))\cap C((0,T_1],H^{\infty}(\mathbb{R}))$. Now, we choose $0<\epsilon <T_1$, apply this result with $\varphi=u(T^*-\epsilon)$ and define
\begin{equation*}
\tilde{u}(t)=\left\{
\begin{aligned}
u(t),\qquad \qquad \qquad &\text{when  }\; 0\leq t\leq T^*-\epsilon, \\
v(t-(T^*-\epsilon)), \quad &\text{when  }\;T^*-\epsilon \leq t\leq T^*-\epsilon+T_1.
\end{aligned}
\right.
\end{equation*}
Then $\tilde{u}$ is a solution of (\ref{intequation}) in the time interval $[0,T^*-\epsilon +T_1]$, which contradicts $T^*<\infty$, since $T^*-\epsilon +T_1>T^*$. This implies that the solution can be extended to infinite time. \\ \\
Now, let $s\in (-3/2,0)$, $\phi\in H^s(\mathbb{R})$ and $u\in X_{T}^s$ be the solution of the integral equation \eqref{intequation}, obtained in above steps, and  let $T'\in (0,T)$ fixed. We have that
$$
\left\|u\right\|_{X_{T'}^s}=M_{T',s}<\infty.
$$
Since $u\in C\left((0,T];H^{\infty}(\mathbb{R})\right)$, it follows that $u(T')\in L^2(\mathbb{R})$. Thus, the GWP result in $H^s$ for $s\geq 0$ implies that $\tilde{u}$, the solution of \eqref{intequation} with initial data $u(T')$, is global in time. Moreover, uniqueness implies that $\tilde{u}(t)=u(T'+t)$ for all $t\in [0,T-T']$. Therefore, we deduce that
\begin{align*}
\left\|u\right\|_{X_{T}^s} & \leq \left\|u\right\|_{X_{T'}^s}+\left\|u(T'+\cdot)\right\|_{X_{T-T'}^s} \\
&\leq M_{T',s}+\left\|\tilde{u}\right\|_{X_{T-T'}^s} \\
&= M_{T',s}+ \sup_{t\in [0,T-T']} \left\{\left\|\tilde{u}(t)\right\|_{s}+t^{|s|/3}\left\|\tilde{u}(t)\right\|\right\} \\
& \leq M_{T',s}+ \left(1+(T-T') ^{|s|/3}\right)\sup_{t\in [0,T-T']}\left\|\tilde{u}(t)\right\|. 
\end{align*}
The global result follows from the above estimate.
\end{proof}


\subsection{Ill-posedness type results}
In this section we prove the ill-posedness result contained in Theorem \ref{malpuestodos}.
\begin{teor}\label{malpuestouno}
Let $s<-\frac{3}{2}$ and $T>0$. Then there does not exist a space $X_T$ continuously embedded in $C([-T,T],H^s(\mathbb{R}))$ such that there exists $C>0$ with
\begin{align}
\nor{S(t)\phi}{X_T}&\leq C\,\nor{\phi}{s}; \qquad \phi \in H^s(\mathbb{R}), \label{illone} \\
\intertext{and}
\nor{\int_0^tS(t-t')[u(t')u_x(t')]\,dt'}{X_T}&\leq C \nora{u}{X_T}{2}; \qquad u\in X_T. \label{illtwo}
\end{align}
\end{teor}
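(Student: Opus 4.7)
The plan is to argue by contradiction. Assume such a space $X_T$ with constant $C>0$ exists. Substituting $u(\cdot)=S(\cdot)\phi$ into \eqref{illtwo}, combining with \eqref{illone}, and composing with the embedding $X_T\hookrightarrow C([-T,T],H^s(\mathbb{R}))$, one obtains, for every $t\in[0,T]$, an estimate purely in $H^s$ of the form
\[
\Bigl\|\int_0^t S(t-t')\,\partial_x\bigl[(S(t')\phi)^2\bigr]\,dt'\Bigr\|_{s}\;\le\; C'\,\|\phi\|_{s}^{2}.
\]
My task reduces to exhibiting a family $\{\phi_N\}\subset H^s(\mathbb{R})$ with $\|\phi_N\|_{s}\sim 1$ along which the left-hand side blows up as $N\to\infty$, whenever $s<-3/2$.

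Following the strategy pioneered by Bourgain and refined by Molinet--Saut--Tzvetkov, I would take $\widehat{\phi_N}(\eta)=N^{-s}\bigl(\chi_{[N,N+1]}(\eta)+\chi_{[-N-1,-N]}(\eta)\bigr)$, so that $\|\phi_N\|_{s}\sim 1$. A direct Duhamel computation in Fourier variables gives
\[
\mathcal{F}\Bigl[\int_0^t S(t-t')\partial_x\bigl[(S(t')\phi_N)^2\bigr]dt'\Bigr](\xi)=i\xi\!\int_{\mathbb{R}}\widehat{\phi_N}(\xi_1)\widehat{\phi_N}(\xi-\xi_1)\,\frac{e^{t\Omega_1(\xi,\xi_1)}-e^{t\,b_\mu(\xi)}}{\Omega(\xi,\xi_1)}\,d\xi_1,
\]
with $\Omega_1(\xi,\xi_1)=b_\mu(\xi_1)+b_\mu(\xi-\xi_1)$ and $\Omega=\Omega_1-b_\mu(\xi)$. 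The only non-negligible contribution comes from the low-high resonance $\xi_1\sim N$, $\xi-\xi_1\sim -N$ (and its mirror image), which forces the output frequency $\xi$ to lie in $[-1,1]$.

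Inside this resonance, using $b_\mu(\eta)=i\eta|\eta|+\mu(|\eta|-|\eta|^3)$, one finds
\[
\operatorname{Re}\Omega_1(\xi,\xi_1)=\mu\bigl[(|\xi_1|+|\xi-\xi_1|)-(|\xi_1|^3+|\xi-\xi_1|^3)\bigr]\sim -2\mu N^{3},
\]
and since $b_\mu(\xi)=O(1)$ on $[-1,1]$, also $|\Omega|\sim 2\mu N^{3}$. Consequently, for any fixed $t\in(0,T]$ the factor $e^{t\Omega_1}$ is super-polynomially small in $N$ and the numerator reduces to $-e^{t\,b_\mu(\xi)}(1+o(1))$, which is bounded below on $[-1,1]$. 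Assembling the prefactor $|\xi|$, the amplitude $N^{-2s}$ of $\widehat{\phi_N}\widehat{\phi_N}$, the $\xi_1$-integration window of length $\sim (1-|\xi|)$, and the denominator $\sim N^{3}$ yields the pointwise lower bound $|\mathcal{F}[\,\cdot\,](\xi)|\gtrsim |\xi|(1-|\xi|)\,N^{-2s-3}$ for $|\xi|\le 1$. Taking the $H^s$-norm (where the weight $\langle\xi\rangle^{s}$ is $\sim 1$) delivers $\|\,\cdot\,\|_{s}\gtrsim N^{-2s-3}$, so that $-2s-3>0$ for $s<-3/2$ produces the desired contradiction as $N\to\infty$.

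The main technical point is securing the clean uniform lower bound $|\Omega(\xi,\xi_1)|\gtrsim \mu N^3$ throughout the resonance box, ruling out any near-cancellation between the $O(N^3)$ real part of $\Omega_1$ and the $O(1)$ quantity $b_\mu(\xi)$; here the dissipation $-\mu|\xi|^3$ actually simplifies the analysis relative to the purely dispersive BO case, because $\operatorname{Re}\Omega_1$ is large and negative rather than merely $O(1)$. A secondary check is that the symmetric contribution with $\xi_1\in[-N-1,-N]$ and $\xi-\xi_1\in[N,N+1]$ reinforces the first rather than cancelling it; since it is obtained from the first by the reflection $\xi_1\mapsto\xi-\xi_1$, it produces the complex conjugate after folding in $\xi$, and the two add constructively at the level of $|\mathcal{F}[\,\cdot\,](\xi)|$.
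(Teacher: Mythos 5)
Your proposal is correct and follows essentially the same route as the paper: reduce \eqref{illtwo} with $u=S(\cdot)\phi$ to an $H^s$ bound on the second Picard iterate, test it on data whose Fourier transform is two unit-width bumps of height $N^{-s}$ at $\pm N$, and extract the low-frequency resonance where $\operatorname{Re}\Omega\sim-\mu N^3$ dominates, yielding the lower bound $N^{-2s-3}\to\infty$ for $s<-3/2$. Your treatment of the time factor (fixing $t>0$ so that $e^{t\Omega_1}$ is negligible and the numerator is bounded below by $|e^{tb_\mu(\xi)}|-o(1)\geq 1/2$) is in fact slightly cleaner than the paper's intermediate estimate, which carries a spurious-looking $e^{-2\mu N^3 t}$ factor, but the two arguments are the same in substance.
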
 
Note that (\ref{illone}) and (\ref{illtwo}) would be needed to implement a Picard iterative scheme on (\ref{intequation}), in the space $X_T$. 

\begin{proof}[Proof of Theorem \ref{malpuestouno}]
Suppose that there exists a space $X_T$ such that (\ref{illone}) and (\ref{illtwo}) hold. Take $u=S(t)\phi$ in (\ref{illtwo}). Then
\begin{equation}
\nor{\int_0^tS(t-t')[(S(t')\phi)(S(t')\phi_x)]\,dt'}{X_T}\leq C\,\nora{S(t)\phi}{X_T}{2}.
\end{equation}
Now using (\ref{illone}) and that $X_T$ is continuously embedded in $C([-T,T],H^s(\mathbb{R}))$ we obtain for any $t\in [-T,T]$ that
\begin{equation}
\nor{\int_0^tS(t-t')[(S(t')\phi)(S(t')\phi_x)]\,dt'}{s}\leq C\,\nora{\phi}{s}{2}. \label{illthree}
\end{equation}
We show that (\ref{illthree}) fails by choosin an appropriate $\phi$. Take $\phi$ defined by its Fourier transform as
\begin{equation}
\widehat{\phi}(\xi)=N^{-s}\,\gamma^{-1/2}\,(\mathbb{I}_I(\xi) + \mathbb{I}_I(-\xi)) \label{funcionfi} 
\end{equation}
where $I$ is the interval $[N,N+2\gamma]$ and $\gamma \ll N$. Note that $\nor{\phi}{s}\sim 1$. Taking $p(\xi)=\mu(|\xi|-|\xi|^3)$ and $q(\xi)=\xi |\xi|$, we have that
\begin{align}
\int_0^t&S(t-t')[(S(t')\phi)(S(t')\phi_x)]\,dt' \notag \\
&=\int_0^t \int_{\mathbb{R}} e^{i x \xi}F_{\mu}(t-t',\xi)(i\xi)\Bigl[F_{\mu}(t',\cdot)\widehat{\phi} \ast F_{\mu}(t',\cdot)\widehat{\phi}\Bigr](\xi)\,d\xi \,dt' \notag \\
&= i \int_{\mathbb{R}^2}e^{i x \xi +t(p(\xi)+i q(\xi))}\xi \; \widehat{\phi}(\xi - \xi_1)\,\widehat{\phi}(\xi_1) \int_0^te^{t'[p(\xi-\xi_1)+p(\xi_1)-p(\xi)+i(q(\xi-\xi_1)+q(\xi_1)-q(\xi))]}\,dt' \,d\xi_1 \,d\xi \notag \\ 
&= i \int_{\mathbb{R}^2}e^{i x \xi +t(p(\xi)+i q(\xi))}\xi \; \widehat{\phi}(\xi - \xi_1)\,\widehat{\phi}(\xi_1) \int_0^te^{t'[\chi(\xi,\xi_1)+i\psi(\xi,\xi_1)]}\,dt' \,d\xi_1 \,d\xi \label{cuentadesiempre}
\end{align}
where 
\begin{align*}
\chi(\xi,\xi_1)&=p(\xi-\xi_1)+p(\xi_1)-p(\xi)=\mu (|\xi-\xi_1|-|\xi-\xi_1|^3+|\xi_1|-|\xi_1|^3-|\xi|+|\xi|^3) \\
\intertext{and}
\psi(\xi,\xi_1)&=q(\xi-\xi_1)+q(\xi_1)-q(\xi)=(\xi-\xi_1)|\xi-\xi_1|+\xi_1|\xi_1|-\xi|\xi| .
\end{align*}
Since $$\widehat{\phi}(\xi - \xi_1)\,\widehat{\phi}(\xi_1)=N^{-2s}\gamma^{-1}\Bigl[\mathbb{I}_I(\xi-\xi_1)\,\mathbb{I}_I(-\xi_1)+\mathbb{I}_I(-(\xi-\xi_1))\,\mathbb{I}_I(\xi_1)\Bigr]$$
we define
$$K_{\xi}:=\{\xi_1:\xi_1\in I, \xi-\xi_1\in -I\}\cup \{\xi_1:\xi_1\in -I, \xi-\xi_1\in I\}$$
then
\begin{align}
\Bigl(\int_0^t&S(t-t')[(S(t')\phi)(S(t')\phi_x)]\,dt' \Bigr)^{\wedge}(\xi) \notag \\
&= i\,\xi\,e^{t(p(\xi)+i q(\xi))}\int_{\mathbb{R}} \widehat{\phi}(\xi - \xi_1)\,\widehat{\phi}(\xi_1) \int_0^te^{t'[\chi(\xi,\xi_1)+i\psi(\xi,\xi_1)]}\,dt' \,d\xi_1 \notag \\
&=i\,\xi\,e^{t(p(\xi)+i q(\xi))}\int_{K_{\xi}} N^{-4s}\,\gamma^{-2} \int_0^te^{t'[\chi(\xi,\xi_1)+i\psi(\xi,\xi_1)]}\,dt' \,d\xi_1 .\label{fourierdenolineal}
\end{align}
We thus deduce that
\begin{align}
&\nora{\int_0^tS(t-t')[(S(t')\phi)(S(t')\phi_x)]\,dt'}{s}{2} \notag \\
&\geq \int_{-2\gamma}^{2\gamma}(1+\xi^2)^s|\xi|^2e^{2tp(\xi)}N^{-4s}\,\gamma^{-2}\Bigl|\int_{K_{\xi}}  \int_0^te^{t'[\chi(\xi,\xi_1)+i\psi(\xi,\xi_1)]}\,dt' \,d\xi_1\Bigr|^2\,d\xi \notag \\
&=\int_{-2\gamma}^{2\gamma}(1+\xi^2)^s|\xi|^2e^{2tp(\xi)}N^{-4s}\,\gamma^{-2}\Bigl|\int_{K_{\xi}}  \dfrac{e^{t'[\chi(\xi,\xi_1)+i\psi(\xi,\xi_1)]}-1}{\chi(\xi,\xi_1)+i\psi(\xi,\xi_1)} \,d\xi_1\Bigr|^2\,d\xi \notag \\
&\geq \int_{-2\gamma}^{2\gamma}(1+\xi^2)^s|\xi|^2e^{2tp(\xi)}N^{-4s}\,\gamma^{-2}\Bigl(\int_{K_{\xi}}  \Re\Bigl(\dfrac{e^{t'[\chi(\xi,\xi_1)+i\psi(\xi,\xi_1)]}-1}{\chi(\xi,\xi_1)+i\psi(\xi,\xi_1)} \Bigr) \,d\xi_1\Bigr)^2\,d\xi. \label{importante}
\end{align}
Since, 
\begin{align*}
\chi(\xi,\xi_1)\sim -\mu N^3 \quad\text{and}\quad |\psi(\xi,\xi_1)| &\sim \gamma N \qquad \text{for all}\qquad \xi_1\in K_{\xi} \\
\intertext{then}
(e^{t\chi}\cos(t\psi)-1)\chi &\gtrsim -\mu N^3\,e^{-\mu N^3 t} \\
\psi \sin(t\psi)\,e^{t\chi} \geq -|\psi|e^{t\chi} &\gtrsim -\gamma \,N\,e^{-\mu N^3 t}   \\
\intertext{and so,}
\chi^2+\psi^2 &\sim N^2(\mu^2\,N^4 + \gamma^2).
\end{align*}
Hence,
\begin{equation}
\Bigl(\int_{K_{\xi}}  \Re\Bigl(\dfrac{e^{t'[\chi(\xi,\xi_1)+i\psi(\xi,\xi_1)]}-1}{\chi(\xi,\xi_1)+i\psi(\xi,\xi_1)} \Bigr) \,d\xi_1\Bigr)^2 \gtrsim \gamma^2\;\dfrac{e^{-2\mu N^3 t}}{N^2(\mu N^2+\gamma)^2} . \label{importanteuno}
\end{equation}
Therefore, from (\ref{importante}) and (\ref{importanteuno})
\begin{align}
\nora{\int_0^tS(t-t')[(S(t')\phi)(S(t')\phi_x)]\,dt'}{s}{2}&\gtrsim \int_{-2\gamma}^{2\gamma} (1+\gamma^2)^s\,\gamma^2\,N^{-4s}\,\dfrac{e^{-2\mu N^3t}}{N^2(\mu N^2+\gamma)^2}\,d\xi \notag \\
&\sim (1+\gamma^2)^s\,\gamma^3\,N^{-4s-2}\,\dfrac{e^{-2\mu N^3t}}{(\mu N^2 + \gamma)^2}. \label{importantedos}
\end{align}
Taking $\gamma=O(1)$ it infers for $N\gg \gamma$ and any $T>0$ that
\begin{equation*}
\sup_{t\in [0,T]}\nor{\int_0^tS(t-t')[(S(t')\phi)(S(t')\phi_x)]\,dt'}{s} \gtrsim N^{-2s-3}.
\end{equation*}
This contradicts (\ref{illthree}) for $N$ large enough, since $\nor{\phi}{s}\sim 1$ and $-2s-3>0$ when $s<-3/2$.
\end{proof}
As a consequence of Theorem \ref{malpuestouno} we can obtain the following result.
\begin{proof}[Proof of Theorem \ref{malpuestodos}]
Consider the Cauchy problem
\begin{equation}\label{ostbourgain}
\left\{
\begin{aligned}
u_t+u_{xxx}+\mu (\mathcal{H}u_x + \mathcal{H}u_{xxx})+uu_x&=0, \\
u(x,0)&=\alpha \phi(x), \quad \alpha\ll 1, \quad \phi\in H^s(\mathbb{R}). 
\end{aligned}
\right.
\end{equation}
Suppose that $u(\alpha,x, t)$ is a local solution of (\ref{ostbourgain}) and that the flow map is $C^2$ at the origin from $H^s(\mathbb{R})$ to $H^s(\mathbb{R})$. We have 
\begin{equation*}
\dfrac{\partial^2u}{\partial \alpha^2}(0,x,t)=-2\int_0^tS(t-t')[(S(t')\phi)(S(t')\phi_x)]\,dt'.
\end{equation*}
The assumption of $C^2$ regularity yields 
\begin{equation*}
\sup_{t\in [0,T]}\nor{-2\int_0^tS(t-t')[(S(t')\phi)(S(t')\phi_x)]\,dt'}{s}\leq C\,\nora{\phi}{s}{2},
\end{equation*}
but this is exactly the estimate which has been shown to fail in the proof of Theorem \ref{malpuestouno}.
\end{proof}


\setcounter{equation}{0}
\section{Theory in $Z_{s,r}$ for $s\geq r>0$}

\begin{prop}\label{xbuenl2}
Let $b\in (0,1/2]$, $s>-3/2$ and $u$ be the solution of the integral equation (\ref{intequation}) with initial data $\phi \in H^s(\mathbb{R})$. If $|x|^b\phi \in L^2(\mathbb{R})$ then $|x|^bu(t)\in L^2(\mathbb{R})$ for all $t\in [0,T]$.
\end{prop}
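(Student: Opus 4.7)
The plan is to transfer the statement to the Fourier side. By Plancherel combined with Theorem \ref{derivaStein} at $p=2$, for every $b\in(0,1/2]$ and $f\in L^2(\mathbb R)$ we have
$$\||x|^b f\|_{L^2}\sim \|f\|_{L^2}+\|\mathcal D^b_\xi \hat f\|_{L^2}.$$
Theorem \ref{localresult} already gives $u\in C((0,T];H^{\infty}(\mathbb R))$, so the $L^2$ piece of $u(t)$ is controlled on $(0,T]$, and at $t=0$ the conclusion is just the hypothesis. It therefore suffices to bound $\|\mathcal D^b_\xi\hat u(\cdot,t)\|_{L^2}$ for $t\in(0,T]$. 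Applying the Fourier transform to (\ref{intequation}) and then $\mathcal D^b_\xi$ produces
$$\mathcal D^b_\xi\hat u(\xi,t)=\mathcal D^b_\xi[F_\mu(t,\xi)\hat\phi(\xi)]-\frac{i}{2}\int_0^t \mathcal D^b_\xi\bigl[\xi F_\mu(t-\tau,\xi)\widehat{u^2}(\xi,\tau)\bigr]\,d\tau,$$
and the two terms on the right must be estimated.

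For the linear term, when $s\ge b$ the estimate (\ref{dertres}) of Lemma \ref{clave1} applied with $h=\phi$ yields $\|\mathcal D^b_\xi[F_\mu(t)\hat\phi]\|_{L^2}\le C_b(\|\phi\|_s+\||x|^b\phi\|_{L^2})$. When $s<b$ the norms $\|\hat\phi\|_{L^2}$ and $\||\xi|^b\hat\phi\|_{L^2}$ on the right of (\ref{dertres}) need not be finite, so instead I combine Leibniz (\ref{productostein}) with (\ref{cotabo}) for the oscillatory factor and (\ref{unoa}), (\ref{deruno}) for the dissipative one. Using the elementary inequality $\sup_\xi \langle\xi\rangle^{-s}|\xi|^\lambda e^{\mu t(|\xi|-|\xi|^3)}\lesssim t^{-(\lambda-s)/3}$ for $\lambda>s$, applied with $\lambda\in\{0,b\}$, I arrive at the time-singular refinement
$$\|\mathcal D^b_\xi[F_\mu(t)\hat\phi]\|_{L^2}\le C_b\bigl(1+t^{-(b-s)/3}\bigr)\|\phi\|_s+C_b\,\||x|^b\phi\|_{L^2},\qquad 0<t\le 1,$$
whose exponent satisfies $(b-s)/3<2/3<1$ and is therefore integrable near $t=0$.

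For the nonlinear integrand I exploit that $u(\tau)\in H^{\infty}(\mathbb R)$ for every $\tau\in(0,T]$. Applying (\ref{dercuatro}) with $\lambda=1$ (and using the Remark after Lemma \ref{clave1} to replace $|\xi|$ by $\xi$) gives
$$\|\mathcal D^b_\xi[\xi F_\mu(t-\tau)\widehat{u^2}(\tau)]\|_{L^2}\le C_b(t-\tau)^{-1/3}\Bigl(\|u^2(\tau)\|_{L^2}+\|u^2(\tau)\|_{\dot H^b}+\|\mathcal D^b_\xi\widehat{u^2}(\tau)\|_{L^2}\Bigr).$$
The first two factors are bounded by $\|u(\tau)\|_{L^\infty}\bigl(\|u(\tau)\|_{L^2}+\|u(\tau)\|_{\dot H^b}\bigr)$, all finite for every $\tau>0$. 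For the third term, Plancherel together with the pointwise bound $\||x|^b u^2\|_{L^2}\le\|u\|_{L^\infty}\||x|^b u\|_{L^2}$ yields
$$\|\mathcal D^b_\xi\widehat{u^2}(\tau)\|_{L^2}\lesssim\|u(\tau)\|_{L^\infty}\bigl(\|u(\tau)\|_{L^2}+\|\mathcal D^b_\xi\hat u(\tau)\|_{L^2}\bigr),$$
and Sobolev embedding together with the regularisation estimate $\|u(\tau)\|_{1/2+}\lesssim\|\phi\|_s\,\tau^{(s-(1/2+))/3}$ from (\ref{estimativaunmedio+}) controls $\|u(\tau)\|_{L^\infty}$ by a time-integrable power.

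Collecting everything gives a singular Gronwall inequality
$$\|\mathcal D^b_\xi\hat u(t)\|_{L^2}\le A(t)+\int_0^t K(t,\tau)\,\|\mathcal D^b_\xi\hat u(\tau)\|_{L^2}\,d\tau$$
with $A\in L^\infty_{\mathrm{loc}}[0,T]$ and kernel $K(t,\tau)\lesssim (t-\tau)^{-1/3}\tau^{(s-(1/2+))/3}$, and a singular-kernel Gronwall argument closes the estimate. The main obstacle will be precisely this bookkeeping of two competing time singularities: the smoothing factor $(t-\tau)^{-1/3}$ from (\ref{dercuatro}) and the blow-up $\tau^{-((1/2+)-s)/3}$ of $\|u(\tau)\|_{L^\infty}$ for $s<0$. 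Their product is Beta-integrable precisely when $((1/2+)-s)/3<1$, i.e.\ $s>-5/2$, amply weaker than the hypothesis $s>-3/2$; the companion restriction $b\le 1/2$ is what keeps the linear singularity $t^{-(b-s)/3}$ integrable at the origin.
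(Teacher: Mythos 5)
Your proposal is correct and follows essentially the same route as the paper: Duhamel's formula, Plancherel together with the Stein derivative $\mathcal{D}^b_\xi$ on the Fourier side, the estimates (\ref{dertres})--(\ref{dercuatro}) for the linear and nonlinear pieces, the pointwise bound $\bigl\||x|^b u^2\bigr\|\leq \|u\|_{L^\infty}\bigl\||x|^b u\bigr\|$, the smoothing estimate (\ref{estimativaunmedio+}), and the Beta-function integral handling the two competing time singularities $(t-\tau)^{-1/3}$ and $\tau^{(s-(1/2+))/3}$. The only real differences are in the closing step and a case distinction: the paper closes by absorption, taking $s=b$ and using that $T=T(\|\phi\|_b)$ from the local theory is small enough that the term $\bigl\||x|^b u\bigr\|_{L_t^\infty L_x^2}$ reappears on the right with coefficient $1/4$, whereas you close with a singular Gronwall inequality and additionally supply a semigroup-smoothing refinement of the linear estimate for $s<b$, a case the paper sidesteps.
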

\begin{proof}
We employ the integral equation (\ref{intequation}) and so, for all $t\in [0,T]$,
\begin{align}
\nor{|x|^bu(t)}{}\leq \nor{|x|^bS(t)\phi}{} + \int_0^t\nor{|x|^bS(t-\tau)[u(\tau)u_x(\tau)]}{}\,d\tau. \label{xalabinte}
\end{align} 
Now, using the Fourier transform, Stein's derivative $\mathcal{D}^b$ and applying (\ref{dercuatro}) and (3.12), from \cite{LP}, we have that
\begin{align}
\nor{|x|^bS(t-\tau)\partial_xu^2(\tau)}{}&\simeq \nor{\mathcal{D}^b\bigl(e^{i(t-\tau )\xi |\xi|+\mu (t-\tau )(|\xi|-|\xi|^3)}\xi \widehat{u^2}(\xi,\tau)\bigr)} \notag \\
&\leq C_b \frac{1}{(t-\tau)^{1/3}}\bigl( \nor{u^2(\tau)}{b}+\nor{|x|^bu^2(\tau)}{}\bigr) \label{preone} \\
&\leq C_b \frac{1}{(t-\tau)^{1/3}}\bigl( \nor{u(\tau)}{L^{\infty}}\nor{u(\tau)}{b}+\nor{u(\tau)}{L^{\infty}}\nor{|x|^bu(\tau)}{}\bigr) \notag \\
&\leq C_b\bigl(\nor{u}{X_T^b}+\nor{|x|^bu}{L_t^{\infty}H_x^0}\bigr)\, \frac{\nor{u(\tau)}{\frac{1}{2}+}}{(t-\tau)^{1/3}}. \label{one}
\end{align}
Since for given $\alpha$, $\beta \in [0,1)$ it holds that
\begin{equation*}
\int_0^t\dfrac{d\tau}{(t-\tau)^{\alpha}\,\,\tau^{\beta}}\leq c_{\alpha, \beta}\, t^{1-\alpha -\beta} .
\end{equation*}
This estimate combined with (\ref{estimativaunmedio+}) give us
\begin{align}
\int_0^t\nor{|x|^bS(t-\tau)[u(\tau)u_x(\tau)]}{}\,d\tau \leq C_{b,s}\bigl(\nor{u}{X_T^b}+\nor{|x|^bu}{L_t^{\infty}H_x^0}\bigr)\,\nor{\phi}{s}\,T^{\frac{2}{3}-\frac{(1/2+)-s}{3}}. \label{two}
\end{align}
Hence, by (\ref{xalabinte}), (\ref{dertres}) and (\ref{two}), it follows that
\begin{equation}\label{three}
\sup_{t\in [0,T]}\nor{|x|^bu(t)}{}\leq C_b\bigl( \nor{\phi}{b}+\nor{|x|^b\phi}{}\bigr)+C_{b,s}\bigl(\nor{u}{X_T^b}+\nor{|x|^bu}{L_t^{\infty}H_x^0}\bigr)\,\nor{\phi}{s}\,T^{\frac{2}{3}-\frac{(1/2+)-s}{3}}.
\end{equation}
By Theorem \ref{localresult} we know that $T$ depends explicitly on $\nor{\phi}{s}=a/2C$ and since $0<T\leq 1$ we see that
\begin{equation*}
T^{\frac{2}{3}-\frac{(1/2+)-s}{3}}\leq T^{\frac{-2s+3}{6}}\; \Longleftrightarrow \;\frac{-2s+3}{6}\leq \frac{2}{3}-\frac{(1/2+)-s}{3}\; \Longleftrightarrow \; s\geq 0+.
\end{equation*}
Then, taking $s=b$, we obtain from (\ref{three}) that
\begin{align}
\sup_{t\in [0,T]}\nor{|x|^bu(t)}{}&\leq C_b\bigl( \nor{\phi}{b}+\nor{|x|^b\phi}{}\bigr)+C_{b}\,a \nor{\phi}{b}\,T^{\frac{-2b+3}{6}}+C_{b}\nor{\phi}{b}\,T^{\frac{-2b+3}{6}}\nor{|x|^bu}{L_t^{\infty}H_x^0} \notag \\
&\leq C_b\bigl( \nor{\phi}{b}+\nor{|x|^b\phi}{}\bigr)+C_{b} \nor{\phi}{b}+\dfrac{1}{4}\nor{|x|^bu}{L_t^{\infty}H_x^0}. \label{four}
\end{align}
So,
\begin{equation}\label{five}
\sup_{t\in [0,T]}\nor{|x|^bu(t)}{}\leq C_b\bigl( \nor{\phi}{b}+\nor{|x|^b\phi}{}\bigr).
\end{equation}
\end{proof}
\begin{rem}
The proof of the Proposition \ref{xbuenl2} shows us that the solution of the IVP (\ref{npbo}) persists in $L^2(|x|^{2b}dx)$ for the same time of existence $T=T(\nor{\phi}{b})$ when $0< b\leq 1/2$.
\end{rem}
\begin{prop}\label{xbuenl21/2<b<1}
Let $b\in (1/2,1)$ and $u$ be the solution of the integral equation (\ref{intequation}) with initial data $\phi \in H^b(\mathbb{R})$. If $|x|^b\phi \in L^2(\mathbb{R})$ then $|x|^bu(t)\in L^2(\mathbb{R})$ for all $t\in [0,T]$.
\end{prop}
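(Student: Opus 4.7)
The plan is to mirror the argument in Proposition \ref{xbuenl2}, exploiting the extra regularity available when $b>1/2$. Starting from the integral equation (\ref{intequation}) and using the identification $\||x|^b f\|\simeq \|\mathcal{D}^b\widehat{f}\|$ furnished by Plancherel, one splits
$$\||x|^b u(t)\|\leq \||x|^b S(t)\phi\| + \frac{1}{2}\int_0^t\||x|^b S(t-\tau)\partial_x(u^2)(\tau)\|\,d\tau,$$
estimating the linear piece with (\ref{dertres}) to obtain $\||x|^b S(t)\phi\|\lesssim_b \|\phi\|_b + \||x|^b\phi\|$, and handling the bilinear piece with (\ref{dercuatro}) applied to $\lambda=1$ and $h=u^2(\tau)$, which produces
$$\||x|^b S(t-\tau)\partial_x(u^2)(\tau)\|\lesssim_b (t-\tau)^{-1/3}\bigl(\|u^2(\tau)\| + \|D^b u^2(\tau)\| + \||x|^b u^2(\tau)\|\bigr).$$

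The crucial new ingredient, absent in the regime $b\leq 1/2$, is that $H^b(\mathbb{R})$ is a Banach algebra and embeds in $L^\infty(\mathbb{R})$ for $b>1/2$. Consequently, a Kato-Ponce type inequality gives $\|D^b u^2(\tau)\|\lesssim \|u(\tau)\|_b^2$, and the pointwise estimate $\||x|^b u^2\|\leq \|u\|_{L^\infty}\||x|^b u\|\lesssim \|u\|_b\||x|^b u\|$ delivers the remaining piece. Since Theorem \ref{localresult} supplies $u\in C([0,T];H^b)$ with $K:=\sup_{[0,T]}\|u(\tau)\|_b<\infty$, setting $M(T):=\sup_{t\in[0,T]}\||x|^b u(t)\|$ and integrating we obtain
$$M(T)\leq C_b\bigl(\|\phi\|_b + \||x|^b\phi\|\bigr) + C_b K^2 T^{2/3} + C_b K T^{2/3} M(T).$$

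Choosing first a small $T_0$ with $C_b K T_0^{2/3}\leq 1/2$, one absorbs the last term into the left-hand side, obtaining a uniform bound on $[0,T_0]$ precisely in the style of (\ref{four})--(\ref{five}). Since the LWP bound $K$ holds on the whole of $[0,T]$, iterating the absorption argument over successive subintervals of length $T_0$ (using $u(T_0)\in H^b\cap L^2(|x|^{2b})$ as the new initial datum at each step) extends the bound to all of $[0,T]$ in finitely many iterations. The main obstacle I anticipate is the \emph{a priori} justification that $\||x|^b u^2(\tau)\|$ is finite on $[0,T]$, which is what makes the inequality self-referential. I would circumvent this by the standard device of approximating $\phi$ in $H^b\cap L^2(|x|^{2b})$ by Schwartz data, running the estimate for the corresponding smooth approximating solutions (whose weighted norms are controlled by persistence in $Z_{s,r}$ at higher indices via Lemma \ref{emuc0enfsr}), and passing to the limit through the continuity of the data-to-solution map in $H^b$ from Theorem \ref{localresult} together with the uniform bound just produced.
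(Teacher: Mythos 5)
Your proposal is correct and follows essentially the same route as the paper's proof: the integral equation is split into linear and Duhamel parts, the linear part is handled by (\ref{dertres}), the nonlinear part by (\ref{dercuatro}) with $\lambda=1$ together with the Banach algebra property of $H^b(\mathbb{R})$ for $b>1/2$ and the bound $\||x|^b u^2\|\leq \|u\|_{L^\infty}\||x|^b u\|$, and the weighted norm is then absorbed using the smallness of $T^{2/3}$ coming from the local existence time (\ref{times>1/2}). Your additional remarks on iterating over subintervals and on justifying a priori finiteness by smooth approximation are refinements the paper leaves implicit, not a different argument.
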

\begin{proof}
The proof is similar to that of Proposition \ref{xbuenl2} because the inequalities (\ref{dertres}) and (\ref{dercuatro}) still valid when $b\in (1/2,1)$. However, within this range $H^b(\mathbb{R})$ is a Banach algebra, therefore from inequality (\ref{preone}) we have that
\begin{align}
\nor{|x|^bS(t-\tau)\partial_xu^2(\tau)}{}&\leq C_b \frac{1}{(t-\tau)^{1/3}}\bigl( \nora{u(\tau)}{b}{2}+\nor{|x|^bu(\tau)}{}\nor{u(\tau)}{L^{\infty}}\bigr)
\intertext{and}
\int_0^t\nor{|x|^bS(t-\tau)\partial_xu^2(\tau)}{}\,d\tau &\leq C_b\bigl(\nora{u}{X_T^b}{2}+ \nor{u}{X_T^b}\nor{|x|^bu}{L_t^{\infty}H_x^0}\bigr)\,T^{2/3} \notag \\
&\leq \bigl(4C^3\nora{\phi}{b}{2}+2C^2\nor{\phi}{b}\nor{|x|^bu}{L_t^{\infty}H_x^0}\bigr)\,T^{2/3} \notag \\
&\leq C\nor{\phi}{b} + \frac{1}{2}\nor{|x|^bu}{L_t^{\infty}H_x^0}. \label{pretwo}
\end{align}
In the last inequality, it was used the choice of $T$ in (\ref{times>1/2}). So, we can conclude (\ref{five}) with $1/2<b<1$.
\end{proof}
\begin{lema}
Let $\theta \in (0,1/2)$, $b=1+\theta$ and $\phi \in Z_{b,b}$. Then, for any $0< t \leq 1$
\begin{align}
\nor{|x|^{1+\theta}S(t)\phi}{}&\leq C_b\,t\,\bigl(\nor{|x|^{1+\theta}\phi}{}+\nor{\phi}{1+\theta}\bigr)\leq C_b \bigl(\nor{|x|^b\phi}{}+\nor{\phi}{b}\bigr) \label{first} \\
\nor{|x|^{1+\theta}S(t)\partial_x\phi}{}&\leq C_b\,t^{-1/3}\,\bigl(\nor{|x|^b\phi}{}+\nor{\phi}{b}\bigr) \label{second}
\end{align}
\end{lema}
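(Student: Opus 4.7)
The plan is to move everything to the Fourier side via Plancherel, factor $|x|^{1+\theta} = |x|^{\theta}\cdot x$, and apply the Stein characterization (Theorem~\ref{derivaStein}) together with the explicit formula for $\partial_{\xi}F_{\mu}$ from Lemma~\ref{lemdecaida} and the pointwise bounds (\ref{unoa}), (\ref{deruno})--(\ref{derdos}), and (\ref{dertres})--(\ref{dercuatro}). The first preparatory step is the identity
\begin{equation*}
\nor{|x|^{1+\theta}g}{}\lesssim_{\theta}\nor{\partial_{\xi}\hat g}{}+\nor{\mathcal D^{\theta}\partial_{\xi}\hat g}{},
\end{equation*}
which I obtain by applying the $L^{2}$ version of Theorem~\ref{derivaStein} with exponent $\theta\in(0,1)$ to the function $h=xg$ (noting $\hat h=i\partial_{\xi}\hat g$) and then bounding $|x|^{1+\theta}|g|\le\langle x\rangle^{\theta}|xg|$ pointwise.

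For the first inequality, I set $g=S(t)\phi$, so $\hat g=F_{\mu}(t,\xi)\,\hat\phi(\xi)$, and expand via (\ref{uno}) as
\begin{equation*}
\partial_{\xi}\hat g=t\bigl[\mu\,\mathrm{sgn}(\xi)+|\xi|(2i-3\mu\xi)\bigr]F_{\mu}\hat\phi+F_{\mu}\partial_{\xi}\hat\phi.
\end{equation*}
For the pure $L^{2}$ bound I extract $L^{\infty}$ norms of $|\xi|^{\lambda}F_{\mu}$ for $\lambda=1,2$ from (\ref{unoa}); each singular factor $(\mu t)^{-\lambda/3}$ is compensated by the prefactor $t$, so the whole contribution is uniformly controlled for $t\in(0,1]$ by $\nor{\phi}{}+\nor{x\phi}{}\lesssim\nor{\phi}{1+\theta}+\nor{|x|^{1+\theta}\phi}{}$. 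For the $\mathcal D^{\theta}$ norm I apply the Leibniz-type inequality (\ref{productostein}) to each summand and then use (\ref{dertres})--(\ref{dercuatro}) with $\hat h=\hat\phi$ or $\partial_{\xi}\hat\phi$; the powers of $\xi$ produce $\nor{\phi}{1+\theta}$, while $\mathcal D^{\theta}\partial_{\xi}\hat\phi$ and $\nor{\partial_{\xi}\hat\phi}{}$ together recover $\nor{|x|^{1+\theta}\phi}{}$ through the same Fourier-side identity applied now to $\phi$ itself.

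For the second inequality the Fourier transform of $S(t)\partial_{x}\phi$ is $i\xi F_{\mu}\hat\phi$. Differentiating produces three pieces: $iF_{\mu}\hat\phi$, $i\xi\,\partial_{\xi}F_{\mu}\cdot\hat\phi$, and $i\xi F_{\mu}\,\partial_{\xi}\hat\phi$. The second piece contains a cubic symbol $|\xi|^{3}F_{\mu}$, whose $L^{\infty}$ norm is $\lesssim(\mu t)^{-1}$, but this is cancelled by the factor $t$ in $\partial_{\xi}F_{\mu}$, leaving an $O(1)$ contribution. The genuine $t^{-1/3}$ singularity comes from the third piece through $\nor{\xi F_{\mu}}{L^{\infty}}\lesssim(\mu t)^{-1/3}$ multiplied by $\nor{\partial_{\xi}\hat\phi}{}\lesssim\nor{x\phi}{}$. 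Applying the analogous Leibniz decomposition for $\mathcal D^{\theta}$ together with (\ref{dertres})--(\ref{dercuatro}) again, one verifies that no term exceeds the singularity $t^{-1/3}$, giving (\ref{second}).

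The main obstacle is bookkeeping: once the Leibniz rule for $\mathcal D^{\theta}$ is invoked, many terms appear in which a singular factor $(\mu t)^{-\lambda/3}$ must be exactly matched by a positive power of $t$ coming from $\partial_{\xi}F_{\mu}$. Extra care is required with the distributional $\mathrm{sgn}(\xi)$ appearing in $\partial_{\xi}F_{\mu}$: since $\mathcal D^{\theta}\mathrm{sgn}(\xi)$ blows up near $\xi=0$ (cf.\ the remark following (\ref{dercuatro})), one must always group $\mathrm{sgn}(\xi)$ with an adjacent $|\xi|$-factor to recover the smooth monomial $\xi$ before applying the product rule for $\mathcal D^{\theta}$.
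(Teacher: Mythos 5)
Your overall strategy coincides with the paper's: factor $|x|^{1+\theta}=|x|^{\theta}\cdot x$, pass to the Fourier side, expand $\partial_{\xi}\bigl(F_{\mu}\widehat{\phi}\bigr)$ via (\ref{uno}), and control the $|x|^{\theta}$ weight through the Stein derivative $\mathcal{D}^{\theta}$ using (\ref{productostein}), (\ref{unoa}), (\ref{dertres})--(\ref{dercuatro}) and the interpolation inequality (\ref{simplifica}). The treatment of (\ref{second}) is essentially right: there $\partial_{\xi}F_{\mu}$ is multiplied by $\xi$, so $\xi\,\mathrm{sgn}(\xi)=|\xi|$ and every symbol is a monomial of order $\geq 1$ to which (\ref{dercuatro}) applies, with the singular factors $(\mu t)^{-\lambda/3}$ absorbed by the prefactor $t$ exactly as you describe.

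The gap is in (\ref{first}), in the term $\mu t\,\mathrm{sgn}(\xi)F_{\mu}(t,\xi)\widehat{\phi}(\xi)$ coming from $\partial_{\xi}F_{\mu}\cdot\widehat{\phi}$. Your stated remedy --- ``always group $\mathrm{sgn}(\xi)$ with an adjacent $|\xi|$-factor'' --- cannot be carried out here, because this summand contains no adjacent power of $|\xi|$. Applying (\ref{dertres}) to it with $\widehat{h}=\mathrm{sgn}(\xi)\widehat{\phi}(\xi)$, i.e.\ $h=-i\mathcal{H}\phi$, produces (up to the harmless $L^{2}$ and $\nor{|\xi|^{\theta}\widehat{\phi}}{}$ pieces) the quantity $\nor{\mathcal{D}^{\theta}\bigl(\mathrm{sgn}(\xi)\widehat{\phi}\bigr)}{}\simeq\nor{|x|^{\theta}\mathcal{H}\phi}{}$, and the whole point of the lemma is that this must be dominated by $\nor{|x|^{\theta}\phi}{}$. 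The paper does this by observing that for $\theta\in(0,1/2)$ the weight $|x|^{\theta}$ (more precisely $|x|^{2\theta}$) is an $A_{2}$ weight, so the Hilbert transform is bounded on $L^{2}(|x|^{2\theta}\,dx)$, giving $\nor{|x|^{\theta}\mathcal{H}\phi}{}\leq c\,\nor{|x|^{\theta}\phi}{}$ --- see (\ref{firstseven})--(\ref{firsteight}). This is precisely where the hypothesis $\theta<1/2$ (equivalently $b<3/2$) enters, and it is why the subsequent results for $b\geq 3/2$ (Lemma \ref{nota2}, Proposition \ref{xbuenl23/2<b<2}) require the extra assumption $\widehat{\phi}(0)=0$. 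Without this $A_{2}$ argument, or some substitute for it, your proof of (\ref{first}) does not close.
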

\begin{proof}
We will denote $F_{\mu}(t,\xi)$ when $\mu=1$ simply by $F(t,\xi)$. So, applying (\ref{uno}), we have that
\begin{align}
&\nor{|x|^{1+\theta}S(t)\phi}{}=\nor{|x|^{\theta}xS(t)\phi}{}\leq \nor{|x|^{\theta}S(t)(x\phi)}{}+\nor{t|x|^{\theta}S(t)\bigl(\mathcal{H}+2D_x-3D_x\partial_x \bigr)\phi}{}. \label{firsttwo}
\end{align}
From (\ref{dertres}), substituting $\widehat{\phi}$ by $\partial_{\xi}\widehat{\phi}$, and applying (\ref{simplifica}), we find that
\begin{align}
\nor{\mathcal{D}^{\theta} \bigl(F(t,\xi)\partial_{\xi}\widehat{\phi}(\xi)\bigr)}{}&\leq C_{\theta} \Bigl(\nor{\partial_{\xi}\widehat{\phi}(\xi)}{}+\nor{\,|\xi|^{\theta}\partial_{\xi}\widehat{\phi}(\xi)}{}+\nor{\mathcal{D}^{\theta}\bigl(\partial_{\xi}\widehat{\phi}(\xi)\bigr)}{}\Bigr) \notag \\
&\leq C_{\theta} \Bigl(\nor{x\phi}{}+\nor{\langle \xi \rangle^{\theta}\partial_{\xi}\widehat{\phi}(\xi)}{}+\nor{|x|^{\theta}x\phi}{} \Bigr) \notag \\
&\leq C_{\theta} \bigl(\nor{|x|^{1+\theta}\phi}{}+ \nor{\phi}{1+\theta} \bigr). \label{firstthree}
\end{align}
It follows from (\ref{firstthree}) that the first term on the right hand side of (\ref{firsttwo}) is bounded by
\begin{align}
\nor{|x|^{\theta}S(t)(x\phi)}{}&\leq C\Bigl(\nor{S(t)(x\phi)}{}+\nor{\mathcal{D}^{\theta} \bigl(F(t,\xi)\partial_{\xi}\widehat{\phi}(\xi)\bigr)}{}\Bigr) \notag \\
&\leq C\bigl(\nor{\phi}{b}+\nor{|x|^b\phi}{}\bigr). \label{firstfive}
\end{align}
The second term of the right hand side of (\ref{firsttwo}) is bounded by 
\begin{align}
& \nor{tS(t)\bigl(\mathcal{H}-3D_x\partial_x +2D_x\bigr)\phi}{}+\nor{t\,\mathcal{D}^{\theta} \bigl(F(t,\xi) sgn(\xi) \widehat{\phi}(\xi)\bigr)}{} + \nor{t\,\mathcal{D}^{\theta} \bigl(F(t,\xi) |\xi| \widehat{\phi}(\xi)\bigr)}{} \notag \\ 
& +\nor{t\,\mathcal{D}^{\theta} \bigl(F(t,\xi) \xi |\xi| \widehat{\phi}(\xi)\bigr)}{} \label{firstsix}
\end{align}
and applying (\ref{unoa}), (\ref{derdos}), (\ref{dertres}) and (\ref{dercuatro}) we have that (\ref{firstsix}) is less than
\begin{align}
C_{\theta}t \Bigl( &\nor{\phi}{1}+ t^{-1/3}\nor{\phi}{1}+\bigl(\nor{\phi}{}+\nor{D_x^{\theta}\phi}{}+\nor{|x|^{\theta}\mathcal{H}\phi}{}\bigr) \notag \\
&+t^{-1/3}\bigl(\nor{\phi}{}+\nor{D_x^{\theta}\phi}{}+\nor{|x|^{\theta}\phi}{}\bigr)+ t^{-2/3}\bigl(\nor{\phi}{}+\nor{D_x^{\theta}\phi}{}+\nor{|x|^{\theta}\phi}{}\bigr)\Bigr) \notag \\
&\leq  C_{\theta} \bigl(\nor{\phi}{1} +\nor{|x|^{\theta}\phi}{} +\nor{|x|^{\theta}\mathcal{H}\phi}{}\bigr). \label{firstseven}
\end{align}
Finally, since $\theta\in (0,1/2)$, $|x|^{\theta}\in A_2$ which means that $\nor{|x|^{\theta}\mathcal{H}\phi}{}\leq c \nor{|x|^{\theta}\phi}{}$, hence
\begin{equation}\label{firsteight}
\nor{t|x|^{\theta}S(t)\bigl(\mathcal{H}+2D_x-3D_x\partial_x \bigr)\phi}{} \leq C\bigl(\nor{\phi}{b}+\nor{|x|^b\phi}{}\bigr).
\end{equation}
(\ref{firstfive}) and (\ref{firsteight}) complete the proof of (\ref{first}). Now, we are going to obtain (\ref{second}) proceeding in the same way. So, applying (\ref{dertres}), (\ref{dercuatro}) and (\ref{simplifica}), we find that
\begin{align}
\nor{\mathcal{D}^{\theta} \bigl(F(t,\xi)\partial_{\xi}(\xi \widehat{\phi}(\xi))\bigr)}{}&\leq \nor{\mathcal{D}^{\theta} \bigl(F(t,\xi)\widehat{\phi}(\xi)\bigr)}{}+\nor{\mathcal{D}^{\theta} \bigl(F(t,\xi)\xi \partial_{\xi}\widehat{\phi}(\xi)\bigr)}{} \notag \\
&\leq c_{\theta}\bigl(\nor{|x|^{\theta}\phi}{}+\nor{\phi}{\theta}\bigr)+c_{\theta}t^{-1/3}\Bigl(\nor{|x|^{\theta}x \phi}{}+\nor{\langle \xi \rangle^{\theta}\partial_{\xi}\widehat{\phi}(\xi)}{}\Bigr) \notag \\
&\leq C_{\theta}t^{-1/3}\bigl(\nor{\phi}{b}+\nor{|x|^{b}\phi}{}\bigr), \label{seconduno}
\end{align}
and since
\begin{align}
\nor{\mathcal{D}^{\theta}\Bigl(\partial_{\xi}(F(t,\xi))\xi \widehat{\phi}\Bigr)}{}&=t\nor{\mathcal{D}^{\theta}\Bigl(F(t,\xi)(sgn(\xi)+2i|\xi|-3\xi |\xi|)\xi \widehat{\phi}\Bigr)}{} \notag \\
&\leq Ct(I_1+I_2+I_3), \label{seconddos}
\end{align}
applying (\ref{dercuatro}), we obtain for $\lambda=1$, $2$, $3$ 
\begin{align}
I_{\lambda}=\nor{\mathcal{D}^{\theta}\Bigl(F(t,\xi)|\xi|^{\lambda}\widehat{\phi}\Bigr)}{}&\leq c_{\theta}t^{-\lambda/3}\bigl(\nor{\phi}{}+\nor{D_x^{\theta}\phi}{}+\nor{|x|^{\theta}\phi}{}\bigr) \notag \\
&\leq c_{\theta}t^{-\lambda/3}\bigl(\nor{\phi}{b}+\nor{|x|^b\phi}{}\bigr). \label{secondtres}
\end{align}
Hence,
\begin{align}
\nor{|x|^{\theta}xS(t)\partial_x\phi}{}&\leq \nor{\mathcal{D}^{\theta} \bigl(F(t,\xi)\partial_{\xi}(\xi \widehat{\phi}(\xi))\bigr)}{}+\nor{\mathcal{D}^{\theta}\Bigl(\partial_{\xi}(F(t,\xi))\xi \widehat{\phi}\Bigr)}{} \notag \\
&\leq C_{\theta}t^{-1/3}\bigl(\nor{\phi}{b}+\nor{|x|^{b}\phi}{}\bigr)+C_{\theta}\bigl(\nor{\phi}{b}+\nor{|x|^{b}\phi}{}\bigr) \notag \\
&\leq C_{\theta}t^{-1/3}\bigl(\nor{\phi}{b}+\nor{|x|^{b}\phi}{}\bigr). \notag
\end{align}
\end{proof}
\begin{prop}\label{xbuenl21<b<3/2}
Let $\theta \in (0,1/2)$, $b=1+\theta$ and $u$ be the solution of the integral equation (\ref{intequation}) with $\phi \in H^b(\mathbb{R})$. If $|x|^b\phi \in L^2(\mathbb{R})$ then $|x|^bu(t)\in L^2(\mathbb{R})$ for all $t\in [0,T]$. 
\end{prop}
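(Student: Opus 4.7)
The strategy mirrors that of Propositions \ref{xbuenl2} and \ref{xbuenl21/2<b<1}, but now using the sharper linear estimates \eqref{first} and \eqref{second} that were just established for the range $b=1+\theta\in (1,3/2)$. Starting from the integral equation \eqref{intequation}, we first write
\begin{equation*}
\nor{|x|^b u(t)}{}\leq \nor{|x|^b S(t)\phi}{}+\frac{1}{2}\int_0^t \nor{|x|^b S(t-\tau)\partial_x u^2(\tau)}{}\,d\tau.
\end{equation*}
The linear part is bounded at once by \eqref{first}. For the Duhamel integrand I would apply \eqref{second} with $\phi$ replaced by $u^2(\tau)/2$, obtaining
\begin{equation*}
\nor{|x|^b S(t-\tau)\partial_x u^2(\tau)}{}\leq C_b\,(t-\tau)^{-1/3}\bigl(\nor{u^2(\tau)}{b}+\nor{|x|^b u^2(\tau)}{}\bigr).
\end{equation*}

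Since $b>1/2$, the space $H^b(\mathbb{R})$ is a Banach algebra, so $\nor{u^2(\tau)}{b}\lesssim \nora{u(\tau)}{b}{2}$. For the weighted piece I would factor $|x|^b u^2 = (|x|^b u)\,u$ and use the Sobolev embedding $\nor{u(\tau)}{L^\infty}\lesssim \nor{u(\tau)}{b}$ to obtain
\begin{equation*}
\nor{|x|^b u^2(\tau)}{}\leq \nor{u(\tau)}{L^\infty}\nor{|x|^b u(\tau)}{}\lesssim \nor{u(\tau)}{b}\nor{|x|^b u(\tau)}{}.
\end{equation*}
The LWP theory for $s>1/2$ developed above (see the remark containing \eqref{times>1/2}) provides $\nor{u}{L^\infty_T H^b_x}\leq 2C\nor{\phi}{b}$ on the interval $[0,T]$ given by $T\sim \nora{\phi}{b}{-3/2}$.

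Integrating in time and using $\int_0^t (t-\tau)^{-1/3}\,d\tau \lesssim T^{2/3}$ yields
\begin{equation*}
\int_0^t \nor{|x|^b S(t-\tau)\partial_x u^2(\tau)}{}\,d\tau \leq C_b\,T^{2/3}\,\nora{\phi}{b}{2}+C_b\,T^{2/3}\,\nor{\phi}{b}\,\sup_{\tau\in[0,T]}\nor{|x|^b u(\tau)}{}.
\end{equation*}
Combining the two estimates,
\begin{equation*}
\sup_{t\in [0,T]}\nor{|x|^b u(t)}{}\leq C_b\bigl(\nor{\phi}{b}+\nor{|x|^b\phi}{}\bigr)+C_b\,T^{2/3}\,\nor{\phi}{b}\,\sup_{t\in[0,T]}\nor{|x|^b u(t)}{}.
\end{equation*}
The decisive point, which is the main obstacle, is to check that the constant $C_b T^{2/3}\nor{\phi}{b}$ in front of the last term is strictly less than $1$, so that it can be absorbed into the left-hand side. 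By the choice $T\sim \nora{\phi}{b}{-3/2}$ from \eqref{times>1/2} we get $T^{2/3}\nor{\phi}{b}\sim 1$, so (shrinking $T$ by a fixed factor depending only on $C_b$, which does not affect the local well-posedness time since it depends only on $\nor{\phi}{b}$) we may assume $C_b T^{2/3}\nor{\phi}{b}\leq 1/2$. Absorbing this term then gives
\begin{equation*}
\sup_{t\in [0,T]}\nor{|x|^b u(t)}{}\leq 2C_b\bigl(\nor{\phi}{b}+\nor{|x|^b\phi}{}\bigr),
\end{equation*}
which proves that $|x|^b u(t)\in L^2(\mathbb{R})$ for all $t\in [0,T]$, and the global persistence on the existence interval follows by iterating in time since the bound depends only on $\nor{\phi}{b}$ and $\nor{|x|^b \phi}{}$.
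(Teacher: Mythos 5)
Your proposal is correct and follows essentially the same route as the paper: the paper's own proof is precisely the argument of Proposition \ref{xbuenl21/2<b<1} (Banach algebra property of $H^b$, factoring $|x|^b u^2=(|x|^bu)u$, integrating the $(t-\tau)^{-1/3}$ singularity, and absorbing via the choice of $T$ in \eqref{times>1/2}) with the weighted linear estimates \eqref{first} and \eqref{second} substituted for \eqref{dertres} and \eqref{dercuatro}, which is exactly what you wrote out. Your added remark about possibly shrinking $T$ by a fixed factor and iterating is a reasonable clarification of the absorption step and does not change the argument.
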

\begin{proof}
The proof is the same proof of Proposition \ref{xbuenl21/2<b<1} but applying (\ref{first}) and (\ref{second}) instead of (\ref{dertres}) and (\ref{dercuatro}). 
\end{proof}
\begin{lema}\label{nota2}
Let $\theta \in (1/2,3/2)$. Then,
\begin{equation}\label{noa2}
\nor{|x|^{\theta}\mathcal{H}\phi}{}\leq \nor{|x|^{\theta}\phi}{}\quad \Longleftrightarrow \quad \widehat{\phi}(0)=0.
\end{equation}
If $\theta=1/2$, 
\begin{equation}\label{noa2uno}
\widehat{\phi}(0)=0\quad \Longrightarrow \quad \nor{|x|^{1/2}\mathcal{H}\phi}{}\leq \nor{\langle x \rangle \phi }{}.
\end{equation} 
\end{lema}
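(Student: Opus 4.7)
The plan is to reduce both implications of the main equivalence to the algebraic commutator identity
\begin{equation*}
x\,\mathcal{H}\phi(x) \;=\; \mathcal{H}(y\phi)(x) \;-\; \tfrac{1}{\pi}\,\widehat{\phi}(0),
\end{equation*}
obtained by splitting $x = (x-y) + y$ inside the principal-value kernel of $\mathcal{H}$, combined with Muckenhoupt $A_2$-weighted boundedness of the Hilbert transform. The key observation is that for $\theta \in (1/2, 3/2)$ the exponent $2(\theta - 1)$ lies strictly between $-1$ and $1$, so $|x|^{2(\theta-1)} \in A_2(\mathbb{R})$ and hence $\mathcal{H}$ acts boundedly on $L^2(|x|^{2(\theta-1)}\,dx)$.

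For the direction ($\Leftarrow$), under $\widehat{\phi}(0) = 0$ the identity collapses to $x\,\mathcal{H}\phi = \mathcal{H}(x\phi)$, so $|x|^\theta \mathcal{H}\phi = \mathrm{sgn}(x)\,|x|^{\theta-1} \mathcal{H}(x\phi)$ pointwise a.e., and the $A_2$-boundedness yields $\nor{|x|^\theta\,\mathcal{H}\phi}{} = \nor{|x|^{\theta-1}\,\mathcal{H}(x\phi)}{} \lesssim \nor{|x|^{\theta-1}(x\phi)}{} = \nor{|x|^\theta \phi}{}$.

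For the direction ($\Rightarrow$), suppose $|x|^\theta \phi,\,|x|^\theta \mathcal{H}\phi \in L^2$. The splitting $|x|<1$ versus $|x|\geq 1$ together with Cauchy--Schwarz and $\theta > 1/2$ first gives $\phi \in L^1$, so $\widehat{\phi}(0)$ is well-defined. Rewriting the commutator identity as
\begin{equation*}
|x|^{\theta-1}\,\mathcal{H}(x\phi) \;=\; \mathrm{sgn}(x)\,|x|^\theta\,\mathcal{H}\phi \;+\; \tfrac{\widehat{\phi}(0)}{\pi}\,|x|^{\theta-1},
\end{equation*}
the left-hand side lies in $L^2$ by the $A_2$-bound applied to $x\phi \in L^2(|x|^{2(\theta-1)})$, and the first term on the right lies in $L^2$ by hypothesis; hence $\widehat{\phi}(0)\,|x|^{\theta-1} \in L^2$. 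But $\int_{\mathbb{R}} |x|^{2(\theta-1)}\,dx$ diverges at $0$ when $\theta \leq 1/2$ and at $\infty$ when $\theta \geq 1/2$, forcing $\widehat{\phi}(0) = 0$.

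For the endpoint $\theta = 1/2$, where the $A_2$-argument degenerates, a direct splitting suffices: using $\widehat{\phi}(0) = 0$ in the commutator identity gives $\nor{x\mathcal{H}\phi}{} = \nor{\mathcal{H}(x\phi)}{} = \nor{x\phi}{}$, and then
\begin{equation*}
\nora{|x|^{1/2}\mathcal{H}\phi}{}{2} \leq \int_{|x|<1} |\mathcal{H}\phi|^2\,dx + \int_{|x|\geq 1} |x|^2 |\mathcal{H}\phi|^2\,dx \leq \nora{\phi}{}{2} + \nora{x\phi}{}{2} \lesssim \nora{\langle x\rangle \phi}{}{2}.
\end{equation*}
The main obstacle is justifying the commutator identity rigorously in the relevant class of $\phi$ (e.g.\ by Schwartz approximation and passage to the limit) and recognizing the sharp exponent range $2(\theta-1)\in(-1,1)$ that puts the problem inside Muckenhoupt's theorem; once these are secured, both implications follow essentially from algebra.
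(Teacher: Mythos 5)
Your proposal is correct and follows essentially the same route as the paper: the commutator identity $x\,\mathcal{H}\phi=\mathcal{H}(x\phi)$ (valid exactly when $\widehat{\phi}(0)=0$) combined with the $A_2$-weighted boundedness of $\mathcal{H}$ for the exponent range $2(\theta-1)\in(-1,1)$, and an elementary splitting at the endpoint $\theta=1/2$ (the paper uses Cauchy--Schwarz on $\int |x|\,|\mathcal{H}\phi|^2\,dx$ rather than cutting at $|x|=1$, but the two are interchangeable). The one substantive difference is in your favor: you actually prove the forward implication, by keeping the constant term $-\tfrac{1}{\pi}\widehat{\phi}(0)$ in the commutator identity and observing that $\widehat{\phi}(0)\,|x|^{\theta-1}\in L^2(\mathbb{R})$ forces $\widehat{\phi}(0)=0$ since $|x|^{2(\theta-1)}$ is not integrable at infinity for $\theta>1/2$; the paper's proof only establishes the reverse direction and leaves this part implicit. (Both proofs really give the inequality with an unspecified $A_2$ constant rather than constant $1$, which is how the lemma is used elsewhere, so your $\lesssim$ is the honest formulation.)
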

\begin{proof}
Let $1/2<\theta <3/2$. Since $x\mathcal{H}\phi=\mathcal{H}(x\phi)$ if and only if $\widehat{\phi}(0)=0$, then
\begin{align}
\nor{|x|^{\theta}\mathcal{H}\phi}{}&=\nor{|x|^{\theta-1}x\mathcal{H}\phi}{}=\nor{|x|^{\theta-1}\mathcal{H}(x\phi)}{} \notag \\
&\leq \nor{|x|^{\theta-1}(x\phi)}{}= \nor{|x|^{\theta}\phi}{}. \label{noa2dos} 
\end{align}
The inequality in (\ref{noa2dos}) is true because $-1/2<\theta -1 <1/2$ and so, $|x|^{\theta -1}\in A_2$. If $\theta =1/2$, with the help of (\ref{noa2}) we obtain that
\begin{align}
\nora{|x|^{1/2}\mathcal{H}\phi}{}{2}&=\int |x|\mathcal{H}\phi \,\overline{\mathcal{H}\phi}\,dx \leq \nor{x\mathcal{H}\phi}{}\nor{\mathcal{H}\phi}{} \notag \\
&=\nor{\mathcal{H}(x\phi)}{}\nor{\phi}{}=\nor{x\phi}{}\nor{\phi}{}\leq \nora{\langle x \rangle \phi }{}{2}. \label{noa2tres}
\end{align} 
\end{proof}
\begin{prop}\label{xbuenl23/2<b<2}
Let $\theta \in [1/2,1)$, $b=1+\theta$ and $u$ be the solution of (\ref{intequation}) with $\phi \in H^b(\mathbb{R})$. If $\widehat{\phi}(0)=0$ and $|x|^b\phi \in L^2(\mathbb{R})$ then $|x|^bu(t)\in L^2(\mathbb{R})$ for all $t\in [0,T]$. 
\end{prop}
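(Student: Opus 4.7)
The plan is to mirror the proof of Proposition \ref{xbuenl21<b<3/2}, using the integral equation \eqref{intequation} together with analogs of the linear estimates \eqref{first} and \eqref{second} adapted to the range $\theta\in[1/2,1)$, i.e.\ $b=1+\theta\in[3/2,2)$. The essential new feature is that $|x|^{\theta}$ is no longer an $A_{2}$ weight, so the inequality $\nor{|x|^{\theta}\mathcal{H}\phi}{}\leq c\nor{|x|^{\theta}\phi}{}$ used implicitly in \eqref{firsteight} fails in general; Lemma \ref{nota2} restores it precisely when $\widehat{\phi}(0)=0$, which is our standing hypothesis.

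First, recall that since $I(u)=\int u\,dx$ is conserved for real-valued solutions, the zero-mean condition $\widehat{u(\cdot,t)}(0)=0$ is preserved by the flow. Next, extend \eqref{first} and \eqref{second} to $\theta\in[1/2,1)$: the proof of \eqref{second} uses only \eqref{dertres}, \eqref{dercuatro}, and \eqref{simplifica}, none of which involve $A_{2}$ or the Hilbert transform, so it carries over verbatim. For \eqref{first}, I repeat the previous derivation but replace the step bounding $\nor{|x|^{\theta}\mathcal{H}\phi}{}$ by Lemma \ref{nota2}: for $\theta\in(1/2,1)$ this gives $\nor{|x|^{\theta}\mathcal{H}\phi}{}\leq \nor{|x|^{\theta}\phi}{}$, and for $\theta=1/2$ it gives $\nor{|x|^{1/2}\mathcal{H}\phi}{}\leq \nor{\langle x\rangle\phi}{}$, a quantity controlled by $\nor{\phi}{b}+\nor{|x|^{b}\phi}{}$ via the elementary pointwise bound $|x|\leq 1+|x|^{3/2}$.

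Third, apply these extended estimates to \eqref{intequation}. The semigroup term $\nor{|x|^{b}S(t)\phi}{}$ is bounded by extended \eqref{first} by $C_{b}(\nor{\phi}{b}+\nor{|x|^{b}\phi}{})$, and the Duhamel integrand is controlled by extended \eqref{second} as
\begin{equation*}
\nor{|x|^{b}S(t-\tau)\partial_{x}(u^{2}(\tau))}{}\leq C_{b}(t-\tau)^{-1/3}\bigl(\nor{u^{2}(\tau)}{b}+\nor{|x|^{b}u^{2}(\tau)}{}\bigr).
\end{equation*}
Since $b>1/2$, $H^{b}(\mathbb{R})$ is a Banach algebra, so $\nor{u^{2}(\tau)}{b}\lesssim\nor{u(\tau)}{b}^{2}$, and the weighted factor satisfies $\nor{|x|^{b}u^{2}(\tau)}{}\leq \nor{u(\tau)}{L^{\infty}}\nor{|x|^{b}u(\tau)}{}\lesssim \nor{u(\tau)}{b}\nor{|x|^{b}u(\tau)}{}$. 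Using the LWP bound $\nor{u}{X_{T}^{b}}\leq 2C\nor{\phi}{b}$ together with the time restriction \eqref{times>1/2}, the quadratic piece integrates to at most $C\nor{\phi}{b}$, while the cross term $C\nor{u}{X_{T}^{b}}T^{2/3}\nor{|x|^{b}u}{L^{\infty}_{t}L^{2}_{x}}$ is absorbed into the left-hand side by shrinking $T$, exactly as in \eqref{pretwo}, producing the analog of \eqref{five}.

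The main obstacle is the careful replacement of the $A_{2}$ argument by Lemma \ref{nota2} in the proof of \eqref{first}: one must verify that every occurrence of $|x|^{\theta}\mathcal{H}(\cdot)$ in that derivation acts on an object with zero mean, so that the hypothesis $\widehat{\phi}(0)=0$ is actually available where it is invoked. The endpoint $\theta=1/2$ is the most delicate, since Lemma \ref{nota2} controls $\nor{|x|^{1/2}\mathcal{H}\phi}{}$ only by $\nor{\langle x\rangle\phi}{}$ rather than $\nor{|x|^{1/2}\phi}{}$, requiring the short weight-interpolation step above to recover the desired $Z_{3/2,3/2}$ dependence.
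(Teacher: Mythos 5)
Your proposal is correct and follows essentially the same route as the paper: the paper's proof simply repeats the argument of Proposition \ref{xbuenl21<b<3/2} and replaces the $A_2$-weight step in obtaining (\ref{firsteight}) by Lemma \ref{nota2}, which is exactly your strategy. Your write-up is in fact more careful than the paper's, since you explicitly verify that the proof of (\ref{second}) involves no Hilbert transform (so needs no zero-mean hypothesis) and you handle the endpoint $\theta=1/2$, where Lemma \ref{nota2} only yields a bound by $\nor{\langle x\rangle\phi}{}$, via the elementary interpolation $|x|\leq 1+|x|^{3/2}$.
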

\begin{proof}
The argument to prove this proposition is exactly the same that that of Proposition \ref{xbuenl21<b<3/2} except in the estimate to obtain (\ref{firsteight}) because $|x|^{\theta}$ is not an $A_2$ weight. But, applying Lema \ref{nota2} we can obtain (\ref{firsteight}) for $\theta \in [1/2,1)$.
\end{proof}
\begin{lema}
Let $\theta \in (0,1/2)$, $b=2+\theta$ and $\phi \in \dot{Z}_{b,b}$. Then, for any $0< t \leq 1$, it holds that
\begin{align}
\nor{|x|^{2+\theta}S(t)\phi}{}&\leq C_b\,t\,\bigl(\nor{|x|^{2+\theta}\phi}{}+\nor{\phi}{2+\theta}\bigr)\leq C_b \bigl(\nor{|x|^b\phi}{}+\nor{\phi}{b}\bigr) \label{third} \\
\text{and}\quad \quad \nor{|x|^{2+\theta}S(t)\partial_x\phi}{}&\leq C_b\,t^{-1/3}\,\bigl(\nor{|x|^b\phi}{}+\nor{\phi}{b}\bigr). \label{fourth}
\end{align}
\end{lema}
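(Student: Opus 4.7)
The strategy is to mirror the proof of the preceding lemma one derivative higher. Write $|x|^{2+\theta} = |x|^{\theta}\cdot x^{2}$; on the Fourier side, $x^{2}S(t)\phi$ corresponds to $-\partial_{\xi}^{2}\bigl(F_{\mu}(t,\xi)\widehat{\phi}(\xi)\bigr)$. By Leibniz,
\begin{equation*}
\partial_{\xi}^{2}(F_{\mu}\widehat{\phi}) = (\partial_{\xi}^{2}F_{\mu})\widehat{\phi} + 2(\partial_{\xi}F_{\mu})(\partial_{\xi}\widehat{\phi}) + F_{\mu}\,\partial_{\xi}^{2}\widehat{\phi},
\end{equation*}
so, combined with (\ref{uno}) and (\ref{dos}) of Lemma \ref{lemdecaida}, the inverse transform splits into three groups: an $S(t)$ applied to $x^{2}\phi$ with prefactor $1$; an $S(t)$ applied to $x\phi$ through first-order symbol combinations of $\text{sgn}(\xi),|\xi|$ and $\xi|\xi|$ with prefactor $t$; and an $S(t)$ applied to $\phi$ through quadratic combinations of those same symbols with prefactor $t^{2}$, plus the distributional contribution $2\mu t\,\delta(\xi)\widehat{\phi}(\xi)$ coming from the first summand of (\ref{dos}).

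The delta piece equals $2\mu t\,\widehat{\phi}(0)\,\delta$, which vanishes on $\dot{Z}_{b,b}$; this is the only place the hypothesis $\widehat{\phi}(0)=0$ is used, and it is essential. After discarding the delta, multiply by $|x|^{\theta}$ and move the weight inside via the Stein characterization (Theorem \ref{derivaStein}), reducing matters to estimates of the form $\|\mathcal{D}^{\theta}(F_{\mu}(t,\xi)|\xi|^{\alpha_{1}}\xi^{\alpha_{2}}\widehat{g}(\xi))\|$ for $g\in\{\phi,x\phi,x^{2}\phi\}$ and $\alpha_{1}+\alpha_{2}\le 4$. Each such norm is controlled by Lemma \ref{clave1} together with (\ref{unoa}), (\ref{deruno}), (\ref{derdos}). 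The $\text{sgn}(\xi)$-factors become Hilbert transforms in physical space, which are bounded on $L^{2}(|x|^{2\theta}dx)$ since $|x|^{\theta}\in A_{2}$ for $\theta\in(0,1/2)$, so no appeal to Lemma \ref{nota2} is needed here.

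One auxiliary tool is the interpolation-type inequality
\begin{equation*}
\nor{\langle\xi\rangle^{\theta}\,\partial_{\xi}^{2}\widehat{\phi}}{}\lesssim_{\theta}\nor{\phi}{2+\theta}+\nor{|x|^{2+\theta}\phi}{},
\end{equation*}
the natural one-order lift of (\ref{simplifica}), proved by the same product rule and interpolation between $\|\langle x\rangle^{2+\theta}\phi\|$ and $\|J_{x}^{2+\theta}\phi\|$ used there. With $0<t\le 1$, the polynomial prefactors $1,t,t^{2}$ are uniformly bounded, and the only singular-in-$t$ contributions come from the symbols $|\xi|^{\lambda}$ via (\ref{derdos})–(\ref{dercuatro}); these are precisely absorbed into the $S(t)(x\phi)$ and $S(t)(\phi)$ terms, where they couple with $t$ and $t^{2}$ respectively. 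Collecting yields (\ref{third}).

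For (\ref{fourth}) apply the same decomposition with $\phi$ replaced by $\partial_{x}\phi$. Since $\widehat{\partial_{x}\phi}(\xi)=i\xi\,\widehat{\phi}(\xi)$ automatically vanishes at $\xi=0$, the delta term drops out without any additional hypothesis on $\partial_{x}\phi$. Each of the three groups now carries one extra factor of $|\xi|$, and applying (\ref{unoa}) and (\ref{dercuatro}) with the exponent increased by one produces exactly the additional $t^{-1/3}$ appearing on the right-hand side of (\ref{fourth}). The main obstacle is purely organizational: tracking the many symbol pieces spawned by $\partial_{\xi}^{2}F_{\mu}$ and matching each to the correct bound in Lemma \ref{clave1}; once the $\delta$-contribution is killed by $\widehat{\phi}(0)=0$, what remains is a disciplined application of the preceding toolbox.
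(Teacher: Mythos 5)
Your proposal is correct and follows essentially the same route as the paper: the Leibniz expansion of $\partial_{\xi}^{2}(F_{\mu}\widehat{\phi})$ into the three groups $S(t)(x^{2}\phi)$, $\partial_{\xi}F_{\mu}\cdot\partial_{\xi}\widehat{\phi}$ and $\partial_{\xi}^{2}F_{\mu}\cdot\widehat{\phi}$, the cancellation of the $2\mu t\,\delta\,\widehat{\phi}(0)$ term via $\phi\in\dot{Z}_{b,b}$, the $A_{2}$ treatment of the $\mathrm{sgn}(\xi)$ (Hilbert transform) factors, the lifted interpolation bound for $\nor{\langle\xi\rangle^{\theta}\partial_{\xi}^{2}\widehat{\phi}}{}$, and the reduction to Lemma \ref{clave1} with symbols $|\xi|^{\alpha_1}\xi^{\alpha_2}$, $\alpha_1+\alpha_2\le 4$, whose $t^{-\lambda/3}$ singularities are absorbed by the prefactors $t$, $t^{2}$ (and produce the extra $t^{-1/3}$ in (\ref{fourth})). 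No substantive gaps.
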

\begin{proof}
We note that
\begin{align}
|x|^bS(t)\phi &= |x|^{\theta}x^2S(t)\phi \notag \\
&= |x|^{\theta}S(t)(x^2\phi)+2|x|^{\theta}\bigl(\partial_{\xi}F(t,\xi)\,\partial_{\xi}\widehat{\phi}(\xi)\bigr)^{\vee}(x)+|x|^{\theta}\bigl(\partial_{\xi}^2(F(t,\xi))\widehat{\phi}(\xi)\bigr)^{\vee}(x) \notag \\
&= B_1+B_2+B_3. \label{thirduno}
\end{align}
Employing $\mathcal{D}^{\theta}$ is sufficient to estimate the $L^2$-norm for the terms $B_1$, $B_2$ and $B_3$. Using (\ref{dertres}) we obtain that for $B_1$
\begin{align}
\nor{\mathcal{D}^{\theta}(F(t,\xi)\partial_{\xi}^2\widehat{\phi})}{}&\leq c_{\theta}\Bigl(\nor{\partial_{\xi}^2\widehat{\phi}}{}+\nor{|\xi|^{\theta}\partial_{\xi}^2\widehat{\phi}}{}+\nor{\mathcal{D}^{\theta}\bigl(\partial_{\xi}^2\widehat{\phi}\,\bigr)}{}\Bigr) \notag \\
&\leq c_{\theta} \Bigl(\nor{x^2\phi}{}+\nor{\langle \xi \rangle^{\theta}J_{\xi}^2 \widehat{\phi}}{}+\nor{|x|^{2+\theta}\phi}{}\Bigr) \notag \\
&\leq c_{\theta} \Bigl(\nor{\langle x\rangle^{2+\theta}\phi}{}+\nora{\langle \xi \rangle^{2+\theta} \widehat{\phi}}{}{\frac{\theta}{2+\theta}}\nora{J_{\xi}^{2+\theta} \widehat{\phi}}{}{\frac{2}{2+\theta}}\Bigr) \notag \\
&\leq C_{\theta} \bigl(\nor{|x|^b\phi}{}+\nor{\phi}{b}\bigr). \label{thirddos}
\end{align}
To estimate the $L^2$-norm of $B_2$ we proceed in a similar way as we estimated the second term of the right hand side of (\ref{firsttwo}) but with $\partial_{\xi}\widehat{\phi}$ instead of $\widehat{\phi}$. So, from (\ref{firstseven}), applying (\ref{simplifica}) and since $|x|^{\theta}\in A_2$, we obtain that 
\begin{align}
\nor{\mathcal{D}^{\theta}\Bigl(\partial_{\xi}F(t,\xi)\,\partial_{\xi}\widehat{\phi}(\xi)\Bigr)}{}&\leq c_{\theta} \Bigl(\nor{x\phi}{} +\nor{|x|^{\theta}x\phi}{} +\nor{|x|^{\theta}\mathcal{H}(x\phi)}{}+\nor{|\xi|^{\theta}\partial_{\xi}\widehat{\phi}}{}\Bigr) \notag \\
&\leq C_{\theta}\bigl(\nor{|x|^b\phi}{}+\nor{\phi}{b}\bigr). \label{thirdtres}
\end{align}
To estimate the $L^2$-norm of $B_3$ we use (\ref{dos}) and that the product $\delta \widehat{\phi}=\widehat{\phi}(0)=0$. So,
\begin{align}
&\nor{\mathcal{D}^{\theta}\bigl(2 t\delta +tF(t,\xi)[2i\,sgn(\xi)-6|\xi|]+t^2F(t,\xi)[sgn(\xi)+2i|\xi|-3 \xi|\xi|]^2\bigr)\widehat{\phi}}{} \notag \\
&\leq t\nor{\mathcal{D}^{\theta}\bigl(F(t,\xi)[2i\,sgn(\xi)-6|\xi|]\widehat{\phi}\bigr)}{}+t^2\nor{\mathcal{D}^{\theta}\bigl(F(t,\xi)[sgn(\xi)+2i|\xi|-3 \xi|\xi|]^2\widehat{\phi}\bigr)}{} \notag \\
&=tB_{31}+t^2B_{32}. \label{thirdcuatro}
\end{align}
Then, applying (\ref{dertres}), (\ref{dercuatro}) and the fact that $|x|^{\theta}\in A_2$, 
\begin{align}
tB_{31}&\leq ct\Bigl(\nor{\mathcal{D}^{\theta}\bigl(F(t,\xi) sgn(\xi) \widehat{\phi}\,\bigr)}{}+\nor{\mathcal{D}^{\theta}\bigl(F(t,\xi) |\xi| \widehat{\phi}\,\bigr)}{}\Bigr) \notag \\
&\leq c_{\theta} t\bigl(\nor{\phi}{}+\nor{D_x^{\theta}\phi}{}+\nor{|x|^{\theta}\phi}{}\bigr)+c_{\theta} t^{2/3}\bigl(\nor{\phi}{}+\nor{D_x^{\theta}\phi}{}+\nor{|x|^{\theta}\phi}{}\bigr) \notag \\
&\leq C_{\theta}\bigl(\nor{|x|^b\phi}{}+\nor{\phi}{b}\bigr), \label{thirdcinco}
\end{align}
and
\begin{align}
t^2B_{32}&= t^2\nor{\mathcal{D}^{\theta}\bigl(F(t,\xi)(sgn(\xi)-3 \xi|\xi|)^2\widehat{\phi}-4F(t,\xi)\xi^2\widehat{\phi}+4iF(t,\xi)|\xi|(sgn(\xi)-3 \xi|\xi|)\widehat{\phi}\,\bigr)}{} \notag \\
&\leq c_{\theta}t^2\sum_{j=0}^4\nor{\mathcal{D}^{\theta}\bigl(F(t,\xi)\xi^j\widehat{\phi}\,\bigr)}{}\leq c_{\theta}t^2\sum_{j=0}^4t^{-j/3}\bigl(\nor{\phi}{}+\nor{D_x^{\theta}\phi}{}+\nor{|x|^{\theta}\phi}{}\bigr) \notag \\
&\leq C_{\theta}\bigl(\nor{|x|^b\phi}{}+\nor{\phi}{b}\bigr). \label{thirdseis}
\end{align}
Hence, (\ref{thirddos}), (\ref{thirdtres}), (\ref{thirdcinco}) and (\ref{thirdseis}) imply (\ref{third}). To prove (\ref{fourth}) we note that
\begin{align}
|x|^b&S(t)\partial_x\phi = |x|^{\theta}x^2S(t)\partial_x\phi \notag \\
&= |x|^{\theta}S(t)(x^2\partial_x\phi)+2|x|^{\theta}\bigl(\partial_{\xi}F(t,\xi)\,\partial_{\xi}\widehat{\partial_x\phi}(\xi)\bigr)^{\vee}(x)+|x|^{\theta}\bigl(\partial_{\xi}^2(F(t,\xi))\widehat{\partial_x\phi}(\xi)\bigr)^{\vee}(x) \notag \\
&= G_1+G_2+G_3. \label{fourthuno}
\end{align}
We proceed exactly in the same way in which we did the proof of (\ref{third}). Then, to estimate the $L^2$-norm of $G_2+G_3$ we use the inequalities applied to obtain (\ref{thirdtres}), (\ref{thirdcinco}) and (\ref{thirdseis}) but substituting $\phi$ by $\partial_x\phi$. So, applying (\ref{simplificaotra}) we have
\begin{align}
&\nor{\mathcal{D}^{\theta}\Bigl(\partial_{\xi}F(t,\xi)\,\partial_{\xi}\bigl(\xi \widehat{\phi}(\xi)\bigr)\Bigr)}{}+\nor{\mathcal{D}^{\theta}\bigl(\partial_{\xi}^2F(t,\xi)\bigr) \xi \widehat{\phi}}{} \notag \\
&\leq c_{\theta}\bigl(\nor{|x|^{1+\theta}\partial_x\phi}{}+\nor{\partial_x\phi}{1+\theta}\bigr) \notag \\
&\leq C_{\theta}\bigl(\nor{|x|^b\phi}{}+\nor{\phi}{b}\bigr). \label{fourthdos}
\end{align}
Finally, to estimate the $L^2$-norm of $G_1$ we find that
\begin{align}
\nor{\mathcal{D}^{\theta}(F(t,\xi)\partial_{\xi}^2(\xi \widehat{\phi}))}{}&\leq 2\nor{\mathcal{D}^{\theta}(F(t,\xi)\partial_{\xi}\widehat{\phi})}{}+\nor{\mathcal{D}^{\theta}(F(t,\xi)\xi\partial_{\xi}^2\widehat{\phi})}{} \notag \\
&\leq G_{11} + G_{12}, \label{fourthtres}
\end{align}
applying (\ref{dertres}) but substituting $h$ by $x\phi$ we have that
\begin{align}
G_{11}&\leq c_{\theta} \Bigl(\nor{x\phi}{}+\nor{|\xi|^{\theta}\partial_{\xi}\widehat{\phi}}{}+\nor{\mathcal{D}^{\theta}\partial_{\xi}\widehat{\phi}}{}\Bigr) \notag \\
&\leq C_{\theta}\bigl(\nor{|x|^b\phi}{}+\nor{\phi}{b}\bigr). \label{fourthcuatro}
\end{align}
and applying (\ref{dercuatro}) but substituting $h$ by $x^2\phi$ we get that
\begin{align}
G_{12}&=\nor{\mathcal{D}^{\theta}\bigl(F(t,\xi)\xi \,\widehat{x^2\phi}\,\bigr)}{} \notag \\
&\leq c_{\theta} t^{-1/3}\bigl(\nor{|x|^{\theta}x^2\phi}{}+\nor{J_x^{\theta}x^2\phi}{}\bigr) \notag \\
&\leq C_{\theta}t^{-1/3}\bigl(\nor{|x|^b\phi}{}+\nor{\phi}{b}\bigr), \label{fourthcinco}
\end{align}
where we have used the same inequalities applied to obtain (\ref{thirddos}) because $\nor{J_x^{\theta}x^2\phi}{}=\nor{\langle \xi \rangle^{\theta}\partial_{\xi}^2\widehat{\phi}}{}$.
\end{proof}
\begin{prop}\label{xbuenl22<b<5/2}
Let $\theta \in (0,1/2)$, $b=2+\theta$ and $u$ be the solution of the integral equation (\ref{intequation}) with $\phi \in H^b(\mathbb{R})$. If $\widehat{\phi}(0)=0$ and $|x|^b\phi \in L^2(\mathbb{R})$ then $|x|^bu(t)\in L^2(\mathbb{R})$ for all $t\in [0,T]$.
\end{prop}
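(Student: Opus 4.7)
The plan is to mimic the proofs of Propositions \ref{xbuenl21/2<b<1} and \ref{xbuenl21<b<3/2}, using now the weighted linear estimates (\ref{third}) and (\ref{fourth}) in place of (\ref{dertres}) and (\ref{dercuatro}). Starting from the integral equation (\ref{intequation}), taking $|x|^b$ and the $L^2$ norm yields
\begin{equation*}
\||x|^b u(t)\| \leq \||x|^b S(t)\phi\| + \tfrac{1}{2}\int_0^t \||x|^b S(t-\tau)\,\partial_x(u^2(\tau))\|\,d\tau.
\end{equation*}
The linear term is controlled directly by (\ref{third}) using the assumption $\widehat{\phi}(0)=0$, which places $\phi \in \dot Z_{b,b}$.

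For the nonlinear term I would apply (\ref{fourth}) with $u^2(\tau)/2$ in place of $\phi$. The important subtlety is that although (\ref{fourth}) is stated for elements of $\dot Z_{b,b}$, the condition $\widehat{\phi}(0)=0$ is not actually used in its proof: the only place a zero-mean hypothesis could enter is through the $\delta$-distribution appearing in $\partial_\xi^2 F_\mu$ (see (\ref{dos})), and in (\ref{fourth}) this is always multiplied by $i\xi\,\widehat{\phi}(\xi)$, which automatically vanishes at $\xi=0$. Consequently (\ref{fourth}) is valid under the weaker hypothesis $\phi \in Z_{b,b}$, so it legitimately applies to $u^2(\tau)/2$ even though $\widehat{u^2(\tau)}(0) = \|u(\tau)\|^2 \neq 0$ in general.

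Since $b=2+\theta > 1/2$, the space $H^b(\mathbb{R})$ is a Banach algebra, so $\|u^2(\tau)\|_b \lesssim \|u(\tau)\|_b^{\,2}$, and Sobolev embedding gives $\||x|^b u^2(\tau)\| \leq \|u(\tau)\|_{L^\infty}\,\||x|^b u(\tau)\| \lesssim \|u(\tau)\|_b\,\||x|^b u(\tau)\|$. Inserting these into (\ref{fourth}) and integrating the integrable factor $(t-\tau)^{-1/3}$ (producing a $T^{2/3}$), one obtains an inequality of the form
\begin{equation*}
\sup_{t\in[0,T]}\||x|^b u(t)\| \leq C_b\bigl(\|\phi\|_b + \||x|^b\phi\|\bigr) + C_b\,T^{2/3}\,\|u\|_{X_T^b}\!\left(\|u\|_{X_T^b} + \sup_{t\in[0,T]}\||x|^b u(t)\|\right).
\end{equation*}
Choosing $T$ as in (\ref{times>1/2}) (possibly further shrunk so that $C_b T^{2/3}\|u\|_{X_T^b} \leq 1/2$), the last term is absorbed into the left-hand side, yielding the desired bound $\sup_{t\in[0,T]}\||x|^b u(t)\| \lesssim \|\phi\|_b + \||x|^b\phi\|$ on the short interval.

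To reach the full existence time, I would iterate this local estimate on consecutive subintervals. The key point enabling the iteration is that $I(u) = \int u\,dx = \widehat{u(t)}(0)$ is a conserved quantity of (\ref{npbo}) on real-valued solutions, so $\widehat{u(t_k)}(0)=0$ holds at every restart time $t_k$, keeping $u(t_k) \in \dot Z_{b,b}$ and allowing (\ref{third}) to be reapplied. I expect the main obstacle to be precisely the verification described above, that (\ref{fourth}) survives the substitution $\phi \mapsto u^2(\tau)/2$ despite the apparent zero-mean requirement in its statement; once that observation is secured, the rest reduces to a routine absorption/iteration argument following the template of the earlier propositions.
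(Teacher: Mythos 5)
Your proposal is correct and follows essentially the same route as the paper, whose proof of this proposition is precisely the one-line instruction to repeat the argument of Proposition \ref{xbuenl21/2<b<1} with (\ref{third}) and (\ref{fourth}) in place of (\ref{dertres}) and (\ref{dercuatro}). Your explicit verification that (\ref{fourth}) needs no zero-mean hypothesis --- because the Dirac term in $\partial_{\xi}^2F_{\mu}$ always meets the factor $i\xi\,\widehat{u^2}(\xi)$, which vanishes at $\xi=0$ --- is exactly the point the paper leaves implicit when applying that estimate to $u^2(\tau)$, and your use of the conservation of $\int u\,dx$ to keep the restart data in $\dot Z_{b,b}$ during the iteration matches the remark the authors make when introducing $\dot Z_{s,r}$.
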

\begin{proof}
The proof is the same proof of Proposition \ref{xbuenl21/2<b<1} but applying (\ref{third}) and (\ref{fourth}) instead of (\ref{dertres}) and (\ref{dercuatro}). 
\end{proof}
\begin{proof}[Proof of Theorem \ref{pre}]
Part \textbf{(i)} is direct consequence of Propositions \ref{xbuenl2}, \ref{xbuenl21/2<b<1} and \ref{xbuenl21<b<3/2}. Part \textbf{(ii)} is deduced from Propositions \ref{xbuenl23/2<b<2} and \ref{xbuenl22<b<5/2}.
\end{proof}

\setcounter{equation}{0}
\section{Proof of Theorem \ref{contunica1}} 

Without loss of generality we assume that $t_1=0<t_2$. Since $u(t_1)=\phi \in Z_{3/2,3/2}$, $\phi \in Z_{3/2,b}$ where $b<3/2$, and then $u\in C([0,T]; Z_{3/2,3/2-})$ by Proposition (\ref{xbuenl21<b<3/2}). The solution to the IVP npBO (\ref{npbo}) can be represented by Duhamel's formula
\begin{equation}\label{uDuhformula}
u(t)=S(t)\phi - \int_0^tS(t-t')(uu_x)(t')\,dt',
\end{equation}
where $S(t)$ is given by (\ref{semigroup}). From Plancherel's equality we have that for every $t$, $|x|^{1/2}x S(t)\phi \in L^2(\mathbb{R})$ if and only if $D_{\xi}^{1/2}\partial_{\xi}(F_{\mu}(t,\xi)\widehat{\phi}(\xi))\in L^2(\mathbb{R})$. The argument in our proof requires localizing near the origin in Fourier frequencies by a function $\chi \in C_0^{\infty}$, $\text{supp} \,\chi \subseteq (-\epsilon , \epsilon)$ and $\chi \equiv 1$ on $(-\epsilon/2, \epsilon/2)$.
Let us start with the computation for the linear part in (\ref{uDuhformula}) by introducing a commutator as follows
\begin{align}
\chi D_{\xi}^{1/2}\partial_{\xi}(F_{\mu}(t,\xi)\widehat{\phi}(\xi))&=\left[\chi , D_{\xi}^{1/2}\right] \partial_{\xi}\left(F_{\mu}(t,\xi)\widehat{\phi}(\xi)\right)+D_{\xi}^{1/2}\left(\chi \partial_{\xi}(F_{\mu}(t,\xi)\widehat{\phi}(\xi))\right) \notag \\
&=A+B. \label{linearpart}
\end{align} 
From Proposition \ref{simplestimate} and identity (\ref{uno}) we have that
\begin{align}
\nor{A}{}&= \nor{[\chi , D_{\xi}^{1/2}] \partial_{\xi}(F_{\mu}(t,\xi)\widehat{\phi}(\xi))}{}  \notag \\
&\lesssim \nor{\partial_{\xi}(F_{\mu}(t,\xi)\widehat{\phi}(\xi))}{}  \notag  \\
&\lesssim \nor{\mu t \,\text{sgn}(\xi)F_{\mu}(t,\xi)\widehat{\phi}(\xi)}{}+\nor{2it|\xi|F_{\mu}(t,\xi)\widehat{\phi}(\xi)}{}+\nor{3\mu t \xi |\xi| F_{\mu}(t,\xi)\widehat{\phi}(\xi)}{}+\nor{F_{\mu}(t,\xi)\partial_{\xi} \widehat{\phi}(\xi)}{}  \notag \\
&\lesssim te^{\mu t}\nor{\phi}{}+2t(e^{\mu t}+(\mu t)^{-1/3})\nor{\phi}{}+3t (e^{\mu t}+(\mu t)^{-2/3})\nor{\phi}{}+e^{\mu t}\nor{\partial_{\xi}\widehat{\phi}(\xi)}{} \notag \\
&\lesssim [(1+t)e^{\mu t}+t^{2/3}+t^{1/3}]\,\nor{\phi}{Z_{1,1}}, \label{aestimate}
\end{align}
where were used (\ref{cotaemu}) and (\ref{unoa}). Rewriting $B$, we obtain that
\begin{align}
B&=D_{\xi}^{1/2}(\chi \partial_{\xi}(F_{\mu}(t,\xi)\widehat{\phi}(\xi))) \notag \\
&=D_{\xi}^{1/2}\left(\mu t \,\text{sgn}(\xi) \chi F_{\mu}(t,\xi)\widehat{\phi}(\xi)\right)+D_{\xi}^{1/2}\left(2i t |\xi| \chi F_{\mu}(t,\xi)\widehat{\phi}(\xi)\right)+\notag \\
&+D_{\xi}^{1/2}\left((-3\mu) t \xi |\xi| \chi F_{\mu}(t,\xi)\widehat{\phi}(\xi)\right)+D_{\xi}^{1/2}\left(\chi F_{\mu}(t,\xi) \partial_{\xi} \widehat{\phi}(\xi)\right) \notag \\
&=B_1+B_2+B_3+B_4. \label{bestimate}
\end{align} 
Now, we are going to estimate $B_4$ in $L^2(\mathbb{R})$. From Theorem \ref{derivaStein}, inequalities (\ref{cotaemu}), (\ref{productostein}), in the Lemma \ref{rprod}, and the inequality (\ref{dertres}), in the Lemma \ref{clave1}, we get that 
\begin{align}
\nor{B_4}{}&\lesssim \nor{\chi F_{\mu}(t,\xi) \partial_{\xi} \widehat{\phi}(\xi)}{}+\nor{\mathcal{D}_{\xi}^{1/2}\left( F_{\mu}(t,\xi) \chi \partial_{\xi} \widehat{\phi}(\xi)\right)}{} \notag \\
&\lesssim e^{\mu t}\nor{x \phi}{}+\nor{\chi \partial_{\xi} \widehat{\phi}(\xi)}{}+\nor{|\xi|^{1/2}\chi \partial_{\xi} \widehat{\phi}(\xi)}{}+\nor{\mathcal{D}_{\xi}^{1/2}\left( \chi \partial_{\xi} \widehat{\phi}(\xi)\right)}{} \notag \\
&\lesssim e^{\mu t}\nor{x \phi}{}+\nor{\chi}{\infty}\nor{x \phi}{}+\nor{|\xi|^{1/2}\chi}{\infty}\nor{x \phi}{}+\nor{\mathcal{D}_{\xi}^{1/2}\left(\chi \right)\,\partial_{\xi} \widehat{\phi}(\xi)}{}+\nor{\chi  \mathcal{D}_{\xi}^{1/2}\left( \partial_{\xi} \widehat{\phi}(\xi)\right)}{} \notag \\
&\leq c(T) \nor{\langle x \rangle^{1+1/2} \phi}{}. \label{b4}
\end{align}
Estimates for $B_2$ and $B_3$ in $L^2(\mathbb{R})$ are obtained in a similar way but using (\ref{dercuatro}) instead of (\ref{dertres}). To estimate $B_1$ in $L^2(\mathbb{R})$ we introduce $\tilde{\chi} \in C^{\infty}_0(\mathbb{R})$ such that $\tilde{\chi}\equiv 1$ on $\text{supp}\,(\chi)$. Then, we can express this term as
\begin{align}
D_{\xi}^{1/2}\left( t \,\text{sgn}(\xi) F_{\mu}(t,\xi) \, \chi \, \widehat{\phi}(\xi)\right)&= t D_{\xi}^{1/2}\left( F_{\mu}(t,\xi)\, \tilde{\chi}\, \chi \, \text{sgn}(\xi)\, \widehat{\phi}(\xi)\right) \notag \\
&= t \left(\left[D_{\xi}^{1/2}, F_{\mu}(t,\xi) \,\tilde{\chi} \right] \, \chi \,\text{sgn}(\xi) \,\widehat{\phi}(\xi) + F_{\mu}(t,\xi)\, \tilde{\chi} \, D_{\xi}^{1/2}\bigl( \chi \, \text{sgn}(\xi) \widehat{\phi}(\xi)\bigr)\right) \notag \\
&=t(B_{1,1}+B_{1,2}). \label{b1}
\end{align}
Again, Proposition \ref{simplestimate} can be applied to estimate $B_{1,1}$ in $L^2(\mathbb{R})$ as
\begin{align}
\nor{B_{1,1}}{}&=\nor{\left[D_{\xi}^{1/2}, F_{\mu}(t,\xi) \,\tilde{\chi} \right] \, \chi \, \text{sgn}(\xi) \,\widehat{\phi}(\xi)}{} \notag \\
&\lesssim \nor{\chi \, \text{sgn}(\xi) \,\widehat{\phi}(\xi)}{} \notag \\
&\lesssim \nor{\phi}{}. \label{b11}
\end{align}
Once we show that the integral part in Duhamel's formula (\ref{uDuhformula}) lies in $L^2(|x|^3\,dx)$, we will be able to conclude that
$$B_{1,2}, \; \tilde{\chi} \, D_{\xi}^{1/2}\bigl( \chi \, \text{sgn}(\xi) \widehat{\phi}(\xi)\bigr), \; D_{\xi}^{1/2}\bigl( \tilde{\chi} \, \chi \, \text{sgn}(\xi) \widehat{\phi}(\xi)\bigr) \; \in L^2(\mathbb{R}), $$
because $u(t_2)=u(t)\in Z_{3/2, 3/2}$ by hypothesis. Therefore, from Proposition \ref{I1} it will follow that $\widehat{\phi}(0)=0$, and from the conservation law 
$$I(u)=\int_{\mathbb{R}}u(x,t)\,dx=\widehat{\phi}(0)=0$$
i. e., $\widehat{u}(0,t)=0$ for all $t$. Hence, $u(\cdot , t) \in \dot{Z}_{3/2,3/2}$. \\ \\
In order to complete the proof, we consider the integral part in Duhamel's formula. We will denote $z=uu_x=\frac{1}{2}\,\partial_x(u^2)$ and so $\widehat{z}=i\,\frac{\xi}{2}\,\widehat{u}*\widehat{u}.$
\begin{align}
\left(|x|^{1/2}\,x\,\int_0^tS(t-t')z(t')\,dt'\right)^{\wedge} (\xi)&=\int_0^tD_{\xi}^{1/2}\partial_{\xi}\bigl( F_{\mu}(t-t',\xi)\widehat{z}(t',\xi)\bigr)\,dt' \notag \\
&=\int_0^tD_{\xi}^{1/2}\bigl(\partial_{\xi} F_{\mu}(t-t',\xi) \, \widehat{z}(t',\xi)\bigr)\,dt' +\int_0^tD_{\xi}^{1/2}\bigl( F_{\mu}(t-t',\xi) \, \partial_{\xi}\widehat{z}(t',\xi) \bigr)\,dt' \notag \\
&=\mathcal{A}+\mathcal{B}. \label{integralpart}
\end{align}
We localize again with the help of $\chi \in C_0^{\infty}(\mathbb{R})$ and then we can write
\begin{align}
\chi \,\mathcal{A}&=\int_0^t\left[\chi , D_{\xi}^{1/2} \right] \bigl(\partial_{\xi} F_{\mu}(t-t',\xi) \, \widehat{z}(t',\xi)\bigr)\,dt' + \int_0^tD_{\xi}^{1/2}\bigl( \chi \, \partial_{\xi} F_{\mu}(t-t',\xi) \, \widehat{z}(t',\xi)\bigr)\,dt' \notag \\
&=\int_0^t\left[\chi , D_{\xi}^{1/2} \right] \bigl((t-t')(\mu \text{sgn}(\xi)+2i|\xi|-3\mu \xi |\xi|) F_{\mu}(t-t',\xi)\, \widehat{z}(t',\xi)\bigr)\,dt' + \notag \\
&\quad + \int_0^tD_{\xi}^{1/2}\bigl( \chi (t-t')(\mu \text{sgn}(\xi)+2i|\xi|-3\mu \xi |\xi|) F_{\mu}(t-t',\xi)\, \widehat{z}(t',\xi)\bigr)\,dt' \notag \\
&=\mathcal{A}_1+\mathcal{A}_2+\mathcal{A}_3+\mathcal{A}_4+\mathcal{A}_5+\mathcal{A}_6. \label{chiA}
\end{align}
and
\begin{align}
\chi \,\mathcal{B}&=\int_0^t \chi D_{\xi}^{1/2}\bigl( F_{\mu}(t-t',\xi) \, \partial_{\xi}\widehat{z}(t',\xi) \bigr)\,dt' \notag \\
&=\int_0^t[\chi , D_{\xi}^{1/2}]\bigl( F_{\mu}(t-t',\xi) \, \partial_{\xi}\widehat{z}(t',\xi) \bigr)\,dt' + \int_0^tD_{\xi}^{1/2}\bigl( \chi F_{\mu}(t-t',\xi) \, \partial_{\xi}\widehat{z}(t',\xi) \bigr)\,dt' \notag \\
&=\mathcal{B}_1+\mathcal{B}_2. \label{chiB}
\end{align}
Now, we must bound all terms in (\ref{chiA}) and (\ref{chiB}). But, we limit our attention to the terms $\mathcal{A}_3$, $\mathcal{A}_6$, $\mathcal{B}_1$ and $\mathcal{B}_2$ which are more representatives and the others can be treated in a similar way. So, combining Proposition \ref{simplestimate}, (\ref{unoa}) and Holder's inequality we have that
\begin{align}
\nor{\mathcal{A}_3}{}&\leq  \int_0^t\nor{ \left[\chi , D_{\xi}^{1/2} \right] \bigl(-3\mu (t-t') \xi |\xi| \, F_{\mu}(t-t',\xi)\, \widehat{z}(t',\xi)\bigr)}{}\,dt' \notag \\
&\lesssim \int_0^t(t-t') \nor{ F_{\mu}(t-t',\xi)\,\xi |\xi|\, \widehat{z}(t',\xi)}{}\,dt' \notag \\
&\lesssim \int_0^t(t-t')\bigl(e^{\mu (t-t')}+(t-t')^{-2/3}\bigr) \, \nor{\xi \, \widehat{u}*\widehat{u}(t',\xi)}{}\,dt' \notag \\
&\lesssim \Bigl(\int_0^t\bigl((t-t')e^{\mu (t-t')}+(t-t')^{1/3}\bigr)^2 \,dt'\Bigr)^{1/2} \nor{\partial_x (u^2)}{L_T^2L_x^2} \notag \\
&\lesssim c(T) T^{1/2}\nor{u}{L_T^{\infty}L_x^{\infty}}\nor{\partial_xu}{L_T^{\infty}L_x^2} \notag \\
&\lesssim c(T) \nora{u}{L_T^{\infty}H_x^1}{2}. \label{cotaA3} 
\end{align} 
For $\mathcal{A}_6$, using Stein's derivative, (\ref{unoa}) and (\ref{dertres}), we obtain that
\begin{align}
\nor{\mathcal{A}_6}{}&\leq  \int_0^t\nor{ D_{\xi}^{1/2} \Bigl(-3\mu (t-t') \chi \, F_{\mu}(t-t',\xi)\,\xi |\xi| \, \widehat{z}(t',\xi)\Bigr)}{}\,dt' \notag \\
&\lesssim \int_0^t (t-t') \nor{ \chi \, \xi |\xi| \, F_{\mu}(t-t',\xi)\, \widehat{z}(t',\xi)}{}\,dt' + \int_0^t (t-t') \nor{ \mathcal{D}_{\xi}^{1/2} \Bigl(F_{\mu}(t-t',\xi)\,\xi |\xi| \,\chi \, \widehat{z}(t',\xi)\Bigr)}{}\,dt' \notag \\
&\lesssim \int_0^t (t-t') \nor{ \chi \,\xi |\xi|}{\infty} e^{\mu(t-t')}\nor{\widehat{z}}{}\,dt' + \int_0^t (t-t') \Bigl( \nor{\chi \,\xi |\xi|\,\widehat{z}}{} +\nor{|\xi|^{1/2}\,\chi \,\xi |\xi|  \, \widehat{z}}{} +\nor{ \mathcal{D}_{\xi}^{1/2} \bigl(\chi \,\xi |\xi|\,\widehat{z}\, \bigr)}{}\Bigr)\,dt'  \notag \\
&=\mathcal{Y}_1+\mathcal{Y}_2.
\end{align}
Almost repeating the estimates to obtain (\ref{cotaA3}) one has that 
$$\mathcal{Y}_1\leq c(T) \nora{u}{L_T^{\infty}H_x^1}{2}\, ,$$
and using (\ref{productostein}) and (3.12) from \cite{LP}
\begin{align}
\mathcal{Y}_2 &\leq \int_0^t (t-t') \Bigr(  \Bigl( \nor{\chi \,\xi^2 |\xi|}{\infty} +\nor{\chi \,\xi^2 |\xi|^{3/2}}{\infty}+\nor{ \mathcal{D}_{\xi}^{1/2} \bigl(\chi \,\xi^2 |\xi|\bigr)}{\infty}\Bigr)\nor{\widehat{u}*\widehat{u}}{} +\nor{ \chi \,\xi^2 |\xi|}{\infty}\nor{\mathcal{D}_{\xi}^{1/2} \bigl(\widehat{u}*\widehat{u}\bigr)}{}\Bigr)\,dt'  \notag \\
&\lesssim c(T)\Bigl(\nor{\widehat{u}*\widehat{u}}{L_T^1L_x^2}+\nor{\mathcal{D}_{\xi}^{1/2} \bigl(\widehat{u}*\widehat{u}\bigr)}{L_T^1L_x^2}\Bigr) \notag \\
&\lesssim c(T)\Bigl(\nor{u^2}{L_T^1L_x^2}+\nor{|x|^{1/2} u^2}{L_T^1L_x^2}\Bigr) \notag \\
&\lesssim c(T)\Bigl(T \nor{u}{L_T^{\infty}L_x^{\infty}}\nor{u}{L_T^{\infty}L_x^2}+T \nor{u}{L_T^{\infty}L_x^{\infty}}\nor{|x|^{1/2} u}{L_T^{\infty}L_x^2}\,\Bigr) \notag \\
&\lesssim c(T) \nor{u}{L_T^{\infty}H_x^1} \Bigl(\nor{u}{L_T^{\infty}H_x^1}+\nor{|x|^{1/2}u}{L_T^{\infty}L_x^2}\,\Bigr). \label{y2}
\end{align}
For $\mathcal{B}_1$, applying Proposition \ref{simplestimate}, (\ref{unoa}) and, again, (3.12) from \cite{LP}, we have
\begin{align}
\nor{\mathcal{B}_1}{}&\lesssim \int_0^t\nor{[\chi , D_{\xi}^{1/2}]\bigl( F_{\mu}(t-t',\xi) \, \partial_{\xi}(\xi \, \widehat{u}*\widehat{u} ) \bigr)}{}\,dt' \notag \\
&\lesssim \int_0^t\nor{ F_{\mu}(t-t',\xi) \, \widehat{u}*\widehat{u} }{}\,dt' + \int_0^t\nor{ F_{\mu}(t-t',\xi) \,\xi \, \partial_{\xi}(\widehat{u}*\widehat{u} ) }{}\,dt' \notag \\
&\lesssim \int_0^t e^{\mu (t-t')} \nor{\widehat{u}*\widehat{u} }{}\,dt' + \int_0^t \Bigl(e^{\mu (t-t')} + (t-t')^{-1/3}\Bigr) \nor{\partial_{\xi}(\widehat{u}*\widehat{u} ) }{}\,dt' \notag \\
&\lesssim c(T)\Bigl(\nor{u^2}{L_T^1L_x^2}+\nor{x u^2}{L_T^1L_x^2}\Bigr) \notag \\
&\lesssim c(T)\Bigl(\nora{u}{L_T^{\infty}H_x^1}{2} + \nor{x u}{L_T^{\infty}L_x^2}\nor{u}{L_T^{\infty}H_x^1}\,\Bigr) \notag \\
&\lesssim c(T) \nor{u}{L_T^{\infty}H_x^1} \Bigl(\nor{u}{L_T^{\infty}H_x^1}+\nor{x\,u}{L_T^{\infty}L_x^2}\,\Bigr). \label{B1}
\end{align}
Finally, for $\mathcal{B}_2$, we use Stein's derivative
\begin{align}
\nor{\mathcal{B}_2}{} &\lesssim \int_0^t \nor{ \chi \, F_{\mu}(t-t',\xi) \, \partial_{\xi}\widehat{z}(t',\xi)}{}\,dt' + \int_0^t\nor{\mathcal{D}_{\xi}^{1/2}\bigl( \chi \, F_{\mu}(t-t',\xi) \, \partial_{\xi}(\xi \,\widehat{u}*\widehat{u}(t',\xi)) \bigr)}{}\,dt' \notag \\
&=Z_1+Z_2.
\end{align}
Estimate for $Z_1$ is obtained in similar way as it was bounded $\mathcal{B}_1$. To estimate $Z_2$ we use (\ref{dertres}), (\ref{productostein}) and (3.12) from \cite{LP}
\begin{align}
Z_2&\leq \int_0^t\nor{\mathcal{D}_{\xi}^{1/2}\bigl( F_{\mu}(t-t',\xi) \, \chi \, \widehat{u}*\widehat{u} \bigr)}{}\,dt' + \int_0^t\nor{\mathcal{D}_{\xi}^{1/2}\bigl(F_{\mu}(t-t',\xi) \, \chi \, \xi \,\partial_{\xi}(\widehat{u}*\widehat{u}) \bigr)}{}\,dt' \notag \\ 
&\lesssim \int_0^t\Bigl(\nor{\chi \, \widehat{u}*\widehat{u}}{}+\nor{|\xi|^{1/2} \, \chi \, \widehat{u}*\widehat{u}}{}+\nor{\mathcal{D}_{\xi}^{1/2}\bigl( \chi \, \widehat{u}*\widehat{u} \bigr)}{}\Bigr)\,dt' + \notag \\
&\qquad +\int_0^t\Bigl(\nor{\chi \, \xi \,\partial_{\xi}(\widehat{u}*\widehat{u})}{}+\nor{|\xi|^{1/2}\,\chi \, \xi \,\partial_{\xi}(\widehat{u}*\widehat{u})}{}+\nor{\mathcal{D}_{\xi}^{1/2}\bigl(\chi \, \xi \,\partial_{\xi}(\widehat{u}*\widehat{u}) \bigr)}{}\Bigr)\,dt' \notag \\ 
&\lesssim \int_0^t\Bigl(\Bigl(\nor{\chi}{\infty}+\nor{|\xi|^{1/2}\chi}{\infty}+\nor{\mathcal{D}_{\xi}^{1/2}\bigl(\chi \bigr)}{\infty}\Bigr)\nor{\widehat{u}*\widehat{u}}{}+\nor{\chi}{\infty}\nor{\mathcal{D}_{\xi}^{1/2}\bigl(\widehat{u}*\widehat{u} \bigr)}{}\Bigr)\,dt' + \notag \\
&\qquad \int_0^t\Bigl(\Bigl(\nor{\chi \xi}{\infty}+\nor{|\xi|^{1/2}\chi \xi}{\infty}+\nor{\mathcal{D}_{\xi}^{1/2}\bigl(\chi \xi \bigr)}{\infty}\Bigr)\nor{\partial_{\xi}(\widehat{u}*\widehat{u})}{}+\nor{\chi \xi}{\infty}\nor{\mathcal{D}_{\xi}^{1/2}\partial_{\xi}\bigl(\widehat{u}*\widehat{u} \bigr)}{}\Bigr)\,dt' + \notag \\
&\lesssim c(T)\Bigl( \nor{\widehat{u}*\widehat{u}}{L_T^1L_{\xi}^2}+\nor{\mathcal{D}_{\xi}^{1/2}(\widehat{u}*\widehat{u})}{L_T^1L_{\xi}^2}+\nor{\partial_{\xi}(\widehat{u}*\widehat{u})}{L_T^1L_{\xi}^2}+\nor{\mathcal{D}_{\xi}^{1/2}\partial_{\xi}\bigl(\widehat{u}*\widehat{u} \bigr)}{L_T^1L_{\xi}^2}\Bigr) \notag \\
&\lesssim c(T) \nor{u}{L_T^{\infty}H_x^1} \Bigl(\nor{u}{L_T^{\infty}H_x^1}+\nor{|x|^{1/2}u}{L_T^{\infty}L_x^2}+\nor{x u}{L_T^{\infty}L_x^2}+ \nor{|x|^{3/2}u}{L_T^{\infty}L_x^2}\Bigr). \label{z2}
\end{align}
Hence, the terms in (\ref{chiA}) and (\ref{chiB}) are all bounded, then by applying the argument after inequality (\ref{b11}) we complete the proof.


\setcounter{equation}{0}
\section{Proof of Theorem \ref{contunica2}} 

From the Proposition \ref{xbuenl22<b<5/2} and the hypothesis we have that for any $\epsilon >0$
$$u\in C([0,T]; \dot{Z}_{5/2, 5/2-\epsilon})\qquad \text{and} \qquad u(\cdot, t_j)\in L^2(|x|^5\,dx), \qquad j=1, \, 2, \, 3.$$
Consquently,
$$\widehat{u}\in C([0,T]; H^{5/2-\epsilon}(\mathbb{R})\cap L^2(|\xi|^5\,d\xi)) \qquad \text{and}\qquad \widehat{u}(\cdot, t_j)\in H^{5/2}(\mathbb{R}), \qquad j=1, \, 2, \, 3,$$
for all $\epsilon >0$. Thus, in particular it follows that
$$\widehat{u}*\widehat{u} \in C([0,T];H^{4}(\mathbb{R})\cap L^2(|\xi|^5\,d\xi)).$$
Let us assume that $t_1=0<t_2<t_3$. Applying (\ref{uno}) and (\ref{dos}) from Lemma \ref{lemdecaida} we obtain that
\begin{align}
\partial_{\xi}^2\bigl(F_{\mu}(t,\xi)\widehat{\phi}(\xi)\bigr)&=E(t,\xi,\widehat{\phi}(\xi)) \notag \\
&=\bigl[2it\,\text{sgn}(\xi)-6\mu t |\xi| +t^2\mu^2+4i\mu t^2\xi -(6\mu^2+4)t^2 \xi^2-12i\mu t^2 \xi^3 +9\mu^2 t^2 \xi^4\bigr]\,F_{\mu}(t,\xi)\,\widehat{\phi}(\xi) \notag \\
& +2\mu t \,\text{sgn}(\xi)F_{\mu}(t,\xi)\partial_{\xi}\widehat{\phi}(\xi) + 4it |\xi|\,F_{\mu}(t,\xi)\,\partial_{\xi}\widehat{\phi}(\xi)-6\mu t \xi |\xi| \, F_{\mu}(t,\xi)\,\partial_{\xi}\widehat{\phi}(\xi)+F_{\mu}(t,\xi)\,\partial_{\xi}^2\widehat{\phi}(\xi), \label{dosprima}
\end{align}
where we apply that the initial data $\phi$ have zero mean value and for this the term involving the Dirac function in (\ref{dosprima}) vanishes. Using Plancherel's theorem and Duhamel's formula (\ref{uDuhformula}), it will be sufficient to show that the assumption that
\begin{equation}
D_{\xi}^{1/2}E(t,\xi,\widehat{\phi}(\xi))-\int_0^tD_{\xi}^{1/2}E(t-t',\xi,\widehat{z}(t',\xi))\,dt' \, , \label{d5/2fd}
\end{equation}
lies in $L^2(\mathbb{R})$ for times $t_1=0<t_2<t_3$, where $\widehat{z}=i\,\frac{\xi}{2}\,\widehat{u}*\widehat{u}$, leads to a contradiction. First, we prove that the linear part in (\ref{d5/2fd}) persists in $L^2$. We introduce as in the proof of Theorem \ref{contunica1} a localizer $\chi \in C_0^{\infty}$, $\text{supp} \,\chi \subseteq (-\epsilon , \epsilon)$ and $\chi \equiv 1$ on $(-\epsilon/2, \epsilon/2)$ so that
\begin{align}
\chi D_{\xi}^{1/2}\partial_{\xi}^2\bigl(F_{\mu}(t,\xi)\widehat{\phi}(\xi)\bigr)&= [\chi , D_{\xi}^{1/2}]\partial_{\xi}^2\bigl(F_{\mu}(t,\xi)\widehat{\phi}(\xi)\bigr)+ D_{\xi}^{1/2}\bigl(\chi \partial_{\xi}^2\bigl(F_{\mu}(t,\xi)\widehat{\phi}(\xi)\bigr)\bigr) \notag \\
&=J+K. \label{jmask}
\end{align}
As for the first term $J$, from Proposition \ref{simplestimate}, this is bounded in $L^2(\mathbb{R})$ by $\nor{\partial_{\xi}^2\bigl(F_{\mu}(t,\xi)\widehat{\phi}(\xi)\bigr)}{}$, which is finite as can be observed from its explicit representation in (\ref{dosprima}), the assumption on the initial data $\phi$, and the quite similar computation already performed in (\ref{aestimate}), therefore we omit details.

On the other hand, for $J$, we notice that
\begin{align}
K&=D_{\xi}^{1/2}\bigl(\chi \partial_{\xi}^2\bigl(F_{\mu}(t,\xi)\widehat{\phi}(\xi)\bigr)\bigr) \notag \\
&=2itD_{\xi}^{1/2}\bigl(\chi \text{sgn}(\xi)F_{\mu}(t,\xi)\widehat{\phi}(\xi)\bigr)-6\mu tD_{\xi}^{1/2}\bigl(\chi |\xi| F_{\mu}(t,\xi)\widehat{\phi}(\xi)\bigr) +t^2\mu^2D_{\xi}^{1/2}\bigl(\chi F_{\mu}(t,\xi)\widehat{\phi}(\xi)\bigr) \notag \\
&\;\;\;+4i\mu t^2D_{\xi}^{1/2}\bigl(\chi \xi F_{\mu}(t,\xi)\widehat{\phi}(\xi)\bigr) -(6\mu^2+4)t^2 D_{\xi}^{1/2}\bigl(\chi \xi^2F_{\mu}(t,\xi)\widehat{\phi}(\xi)\bigr)-12i\mu t^2D_{\xi}^{1/2}\bigl(\chi \xi^3F_{\mu}(t,\xi)\widehat{\phi}(\xi)\bigr)  \notag \\
&\;\;\; +9\mu^2 t^2 D_{\xi}^{1/2}\bigl(\chi \xi^4\,F_{\mu}(t,\xi)\,\widehat{\phi}(\xi)\bigr)+2\mu t D_{\xi}^{1/2}\bigl(\chi \text{sgn}(\xi)F_{\mu}(t,\xi)\partial_{\xi}\widehat{\phi}(\xi)\bigr) + 4itD_{\xi}^{1/2}\bigl(\chi |\xi|\,F_{\mu}(t,\xi)\,\partial_{\xi}\widehat{\phi}(\xi)\bigr) \notag \\
&\;\;\; -6\mu t D_{\xi}^{1/2}\bigl(\chi \xi |\xi| \, F_{\mu}(t,\xi)\,\partial_{\xi}\widehat{\phi}(\xi)\bigr)+D_{\xi}^{1/2}\bigl(\chi F_{\mu}(t,\xi)\,\partial_{\xi}^2\widehat{\phi}(\xi)\bigr) \notag \\
&=K_1+K_2+K_3+K_4+K_5+K_6+K_7+K_8+K_9+K_{10}+K_{11}. \label{estimatek}
\end{align}
We show in detail the estimates for $K_7$ and $K_{11}$ which are the terms involving the highest regularity and decay of the initial data. Estimates for all the another terms in (\ref{estimatek}), except $K_8$, are obtained in a similar manner as $K_7$ and $K_{11}$. $K_8$ will be canceled with a term arising in the integral part in Duhamel's formula. 
For $K_7$ we obtain from Theorem \ref{derivaStein}, (\ref{dercuatro}), (\ref{unoa}) and fractional product rule type estimate (\ref{productostein}) that
\begin{align}
\nor{K_7}{}&\lesssim t^2\nor{ \chi \xi^4 F_{\mu}(t,\xi)\widehat{\phi}(\xi)}{} + t^2\nor{ \mathcal{D}_{\xi}^{1/2}\bigl(\chi \xi^4\,F_{\mu}(t,\xi)\,\widehat{\phi}(\xi)\bigr)}{} \notag \\
&\lesssim t^2\nor{ \chi \xi^4 F_{\mu}(t,\xi)\widehat{\phi}(\xi)}{} + t^{2/3}\Bigl(\nor{\chi \widehat{\phi}(\xi)}{}+\nor{|\xi|^{1/2}\chi \widehat{\phi}(\xi)}{}+\nor{ \mathcal{D}_{\xi}^{1/2}\bigl(\chi \widehat{\phi}(\xi)\bigr)}{}\Bigr) \notag \\
&\lesssim t^2e^{\mu t}\nor{\chi \xi^4}{\infty}\nor{\phi}{}+t^{2/3}\Bigl(\nor{\phi}{}+\nor{|\xi|^{1/2}\chi}{\infty} \nor{\phi}{}+\nor{ \mathcal{D}_{\xi}^{1/2}\bigl(\chi \bigr)}{\infty}\nor{\phi}{}+\nor{\chi}{\infty}\nor{\mathcal{D}_{\xi}^{1/2}\bigl(\widehat{\phi}(\xi)\bigr)}{}\Bigr) \notag \\
& \lesssim c(T) \nor{\langle x \rangle^{1/2} \phi }{}. \label{estimatek7}
\end{align}
and similarly
\begin{align}
\nor{K_{11}}{}&\lesssim \nor{ \chi  F_{\mu}(t,\xi)\partial_{\xi}^2 \widehat{\phi}(\xi)}{} + \nor{ \mathcal{D}_{\xi}^{1/2}\bigl(\chi F_{\mu}(t,\xi) \partial_{\xi}^2 \widehat{\phi}(\xi)\bigr)}{} \notag \\
&\lesssim \nor{ \chi }{\infty} e^{\mu t}\nor{\partial_{\xi}^2 \widehat{\phi}(\xi)}{} +\nor{\chi \partial_{\xi}^2\widehat{\phi}(\xi)}{}+\nor{|\xi|^{1/2}\chi \partial_{\xi}^2\widehat{\phi}(\xi)}{}+\nor{ \mathcal{D}_{\xi}^{1/2}\bigl(\chi \partial_{\xi}^2\widehat{\phi}(\xi)\bigr)}{} \notag \\
&\lesssim e^{\mu t}\nor{x^2 \phi}{}+\nor{\chi}{\infty}\nor{x^2\phi}{}+\nor{|\xi|^{1/2}\chi}{\infty} \nor{x^2\phi}{}+\nor{ \mathcal{D}_{\xi}^{1/2}\bigl(\chi \bigr)}{\infty}\nor{\partial_{\xi}^2\widehat{\phi}(\xi)}{}+\nor{\chi}{\infty}\nor{\mathcal{D}_{\xi}^{1/2}\partial_{\xi}^2 \bigl(\widehat{\phi}(\xi)\bigr)}{} \notag \\
& \lesssim c(T) \nor{\langle x \rangle^{2+1/2} \phi }{}. \label{estimatek11}
\end{align}
Now, let us go over the integral part in (\ref{d5/2fd}) that can be written in Fourier space and with the help of a commutator as
\begin{align}
-\int_0^t \chi D_{\xi}^{1/2}E(t-t',\xi,\widehat{z}(t',\xi))\,dt'&=\int_0^t[D_{\xi}^{1/2}, \chi ] E(t-t',\xi,\widehat{z}(t',\xi))\,dt' - \int_0^tD_{\xi}^{1/2}\bigl(\chi E(t-t',\xi,\widehat{z}(t',\xi))\bigr)\,dt' \notag \\
&=\mathcal{J}+\mathcal{K}. \label{jjmaskk}
\end{align}
where
\begin{align}
\mathcal{J}&= \int_0^t[D_{\xi}^{1/2}, \chi ] \Bigl( 2\mu (t-t')\delta \, \widehat{z}(t',\xi) +\Bigl( 2i(t-t')\,\text{sgn}(\xi)-6\mu (t-t') |\xi| +(t-t')^2\mu^2+4i\mu (t-t')^2\xi + \notag \\
&\qquad -(6\mu^2+4)(t-t')^2 \xi^2-12i\mu (t-t')^2 \xi^3+9\mu^2 (t-t')^2 \xi^4\Bigr) F_{\mu}(t-t')\widehat{z}(t',\xi) + \notag \\
&\qquad +2\mu (t-t') \,\text{sgn}(\xi)F_{\mu}(t-t',\xi)\partial_{\xi}\widehat{z}(t',\xi) + 4i(t-t') |\xi|\,F_{\mu}(t-t',\xi)\,\partial_{\xi}\widehat{z}(t',\xi)+ \notag \\
&\qquad -6\mu (t-t') \xi |\xi| \, F_{\mu}(t-t',\xi)\,\partial_{\xi}\widehat{z}(t',\xi)+F_{\mu}(t-t',\xi)\,\partial_{\xi}^2\widehat{z}(t',\xi) \Bigr) \,dt'\notag \\
&= \mathcal{J}_1+\mathcal{J}_2+\mathcal{J}_3+\mathcal{J}_4+\mathcal{J}_5+\mathcal{J}_6+\mathcal{J}_7+\mathcal{J}_8+\mathcal{J}_9+\mathcal{J}_{10}+\mathcal{J}_{11}+\mathcal{J}_{12}, \label{estimatejs}
\end{align}
and
\begin{align}
\mathcal{K}&=- \int_0^tD_{\xi}^{1/2}\Bigl( 2\mu (t-t')\chi \delta \, \widehat{z}(t',\xi) +\Bigl( 2i(t-t') \chi \,\text{sgn}(\xi)-6\mu (t-t')\chi |\xi| +(t-t')^2\mu^2\chi +4i\mu (t-t')^2 \chi \xi + \notag \\
&\qquad -(6\mu^2+4)(t-t')^2\chi \xi^2-12i\mu (t-t')^2\chi \xi^3+9\mu^2 (t-t')^2\chi \xi^4\Bigr) F_{\mu}(t-t')\widehat{z}(t',\xi) + \notag \\
&\qquad +2\mu (t-t') \,\text{sgn}(\xi)\chi F_{\mu}(t-t',\xi)\partial_{\xi}\widehat{z}(t',\xi) + 4i(t-t')\chi |\xi|\,F_{\mu}(t-t',\xi)\,\partial_{\xi}\widehat{z}(t',\xi)+ \notag \\
&\qquad -6\mu (t-t')\chi \xi |\xi| \, F_{\mu}(t-t',\xi)\,\partial_{\xi}\widehat{z}(t',\xi)+\chi F_{\mu}(t-t',\xi)\,\partial_{\xi}^2\widehat{z}(t',\xi) \Bigr) \,dt' \notag \\
&= \mathcal{K}_1+\mathcal{K}_2+\mathcal{K}_3+\mathcal{K}_4+\mathcal{K}_5+\mathcal{K}_6+\mathcal{K}_7+\mathcal{K}_8+\mathcal{K}_9+\mathcal{K}_{10}+\mathcal{K}_{11}+\mathcal{K}_{12}. \label{estimateks}
\end{align}
Notice that $\mathcal{J}_1$ and $\mathcal{K}_1$ vanish since $u\partial_xu$ has zero mean value and for $\mathcal{J}_2$, $\mathcal{J}_3$, $\mathcal{J}_4$, $\mathcal{J}_5$, $\mathcal{J}_6$, $\mathcal{J}_7$, $\mathcal{J}_8$, $\mathcal{J}_9$, $\mathcal{J}_{10}$, $\mathcal{J}_{11}$,  $\mathcal{J}_{12}$, $\mathcal{K}_2$, $\mathcal{K}_3$, $\mathcal{K}_4$, $\mathcal{K}_5$, $\mathcal{K}_6$, $\mathcal{K}_7$, $\mathcal{K}_8$, $\mathcal{K}_{10}$, $\mathcal{K}_{11}$ and $\mathcal{K}_{12}$, in $L^2(\mathbb{R})$ are essentially the same for their counterparts in equations (\ref{chiA}) and (\ref{chiB}), in the proof of Theorem \ref{contunica1}, so we omit the details of their estimates.
Therefore, from the assumption that $\phi=u(0)=u(t_1)$, $u(t_2) \in \dot{Z}_{5/2,5/2}$, equations (\ref{estimatejs}), (\ref{estimateks}) and the estimates above, we conclude that
\begin{align}
R&=K_8+\mathcal{K}_9 \notag \\
&=2\mu t D_{\xi}^{1/2}\bigl(\chi \text{sgn}(\xi)F_{\mu}(t,\xi)\partial_{\xi}\widehat{\phi}(\xi)\bigr)- \int_0^tD_{\xi}^{1/2}\Bigl(2\mu (t-t') \,\text{sgn}(\xi)\chi F_{\mu}(t-t',\xi)\partial_{\xi}\widehat{z}(t',\xi) \Bigr) \,dt', \label{R}
\end{align}
is a function in $L^2(\mathbb{R})$ at time $t=t_2$. But
\begin{align}
R&=2\mu t D_{\xi}^{1/2}\bigl(\chi \text{sgn}(\xi)F_{\mu}(t,\xi)\bigl(\partial_{\xi}\widehat{\phi}(\xi)-\partial_{\xi}\widehat{\phi}(0)\bigr)\bigr) \notag \\
&\;\;\;- 2\mu \int_0^tD_{\xi}^{1/2}\Bigl( (t-t') \,\text{sgn}(\xi)\chi F_{\mu}(t-t',\xi)\Bigl( \partial_{\xi}\Bigl(i\frac{\xi}{2}\widehat{u}*\widehat{u}(t',\xi)\Bigr)- \partial_{\xi}\Bigl(i\frac{\xi}{2}\widehat{u}*\widehat{u}(t',0)\Bigr)\Bigr)\Bigr) \,dt' \notag \\
&\;\;\;+2\mu t D_{\xi}^{1/2}\bigl(\chi \text{sgn}(\xi)F_{\mu}(t,\xi)\partial_{\xi}\widehat{\phi}(0)\bigr) \notag\\
&\;\;\;-2\mu \int_0^t(t-t') D_{\xi}^{1/2}\Bigl( \text{sgn}(\xi)\chi F_{\mu}(t-t',\xi)\partial_{\xi}\Bigl(i\frac{\xi}{2}\widehat{u}*\widehat{u}(t',0)\Bigr) \Bigr) \,dt' \notag \\
&=R_1+R_2+R_3+R_4. \label{IdR}
\end{align}
We argue like at the end of the proof of Theorem 3 in \cite{FP}. In this way, $R_1$ and $R_2$ are in $L^2(\mathbb{R})$ and this implies that $(R_3+R_4)(t_2)\in L^2(\mathbb{R})$. Also,
$$\partial_{\xi}\Bigl(i\frac{\xi}{2}\widehat{u}*\widehat{u}\Bigr)(0)=-i\int x u\partial_xu\,dx = \frac{i}{2}\nora{u}{}{2},$$
and from the npBO equation we have
\begin{equation}\label{intxnpbo}
\dfrac{d}{dt}\int xu\,dx+\int x\partial_x^2\mathcal{H}u\,dx+\int x u\partial_xu\,dx+\mu \int x \partial_x\mathcal{H}u\,dx+\mu \int x \partial_x^3\mathcal{H}u\,dx = 0,
\end{equation}
which shows that
\begin{equation}\label{intxnpbo1}
\dfrac{d}{dt}\int xu\,dx-\mu \int \mathcal{H}u\,dx = -\int x u\partial_xu\,dx=\dfrac{1}{2}\nora{u}{}{2},
\end{equation}
and hence
$$\partial_{\xi}\Bigl(i\frac{\xi}{2}\widehat{u}*\widehat{u}\Bigr)(0)=i\,\dfrac{d}{dt}\int xu\,dx \, ,$$
because $\widehat{\mathcal{H}u}(0)=\int \mathcal{H}u\,dx=0$. Now, substituting this into $R_4$ we have after integration by parts that
\begin{align}
R_4&=-2 i \mu D_{\xi}^{1/2}\left[ \text{sgn}(\xi)\chi \int_0^t(t-t') F_{\mu}(t-t',\xi)\Bigl( \dfrac{d}{dt}\int xu\,dx  \Bigr) \,dt' \right] \notag \\
&=-2 i \mu D_{\xi}^{1/2} \left[\text{sgn}(\xi)\chi (t-t') F_{\mu}(t-t',\xi)\left. \Bigl(\int xu\,dx \Bigr)  \right|_{t'=0}^{t'=t}\right.  +\notag \\
&\;\;\;+\left.\text{sgn}(\xi) \chi \int_0^t F_{\mu}(t-t',\xi) \Bigl(\int xu\,dx \Bigr)\,dt' + \text{sgn}(\xi)\chi \int_0^t (t-t')F_{\mu}(t-t',\xi)b_{\mu}(\xi) \Bigl(\int xu\,dx \Bigr)\,dt'  \right]  \notag \\
&=S_1+S_2+S_3. \label{s3}
\end{align}
Since $\partial_{\xi}\widehat{\phi}(0)=-i\widehat{x\phi}(0)=-i\int x\phi(x)\,dx$, then $S_1=-R_3$. We observe that $S_3$ in (\ref{s3}) belongs to $L^2(\mathbb{R})$, therefore 
$$S_2=-2 i \mu D_{\xi}^{1/2}\left(\text{sgn}(\xi) \chi \int_0^t  F_{\mu}(t-t',\xi) \Bigl(\int xu(x,t')\,dx \Bigr)\,dt'\right)$$
is in $L^2(\mathbb{R})$ at time $t=t_2$, and from Theorem \ref{derivaStein} this is equivalent to have that 
\begin{equation}\label{s2}
\mathcal{D}_{\xi}^{1/2}\left(\text{sgn}(\xi) \chi(\xi) \int_0^{t_2}  F_{\mu}(t_2-t',\xi) \Bigl(\int xu(x,t')\,dx \Bigr)\,dt'\right)\in L^2(\mathbb{R}),
\end{equation}
which from Proposition 3 in \cite{FP} implies that $\int_0^{t_2}\bigl(\int x u(x,t')\,dx\bigr)\,dt'=0$ and hence $\int xu(x,t)\,dx$ must be zero at some time in $(0,t_2)$. We re-apply the same argument to conclude that $\int xu(x,t)\,dx$ is again zero at some other time in $(t_2, t_3)$. Finally,  identity (\ref{intxnpbo1}) and the Fundamental Theorem of Calculus complete the proof of the theorem\,$\square$.


\renewcommand{\refname}{\sc References}

\end{document}